\documentclass[11pt]{amsart}

\usepackage{times}
\usepackage{amssymb}
\usepackage{graphicx,xspace}
\usepackage{epsfig}
\usepackage{enumitem}
\usepackage{xcolor}
\usepackage{tikz}
\usetikzlibrary{decorations.markings}
\usepackage{gensymb}
\usepackage[T1]{fontenc}
\usepackage[utf8]{inputenc}
\usepackage{bm}
\usepackage{hyperref}
\usepackage[capitalise]{cleveref}
\usepackage{todonotes}

\newtheorem{theorem}{Theorem}

\theoremstyle{plain}

\newtheorem{lemma}{Lemma}[section]
\newtheorem{corollary}[lemma]{Corollary}
\newtheorem{proposition}[lemma]{Proposition}

\theoremstyle{remark}
\newtheorem{remark}[lemma]{Remark}
\newtheorem{example}[lemma]{Example}
\newtheorem{special_case}[lemma]{Special case}
\newtheorem{definition}[lemma]{Definition}

\newcommand{\s}[1]{\mathbf{#1}}

\newcommand{\C}{\mathbb{C}}
\newcommand{\N}{\mathbb{N}}
\newcommand{\M}{\mathbf{M}}

\newcommand{\F}{\mathbf{F}}
\renewcommand{\L}{\mathbf{L}}

\newcommand{\ov}[1]{\overline{#1}}
\renewcommand{\u}[1]{{#1}_\star}

\newcommand{\Z}{\mathbb{Z}}

\newcommand{\II}{\mathbf{I}}
\newcommand{\JJ}{\mathbf{J}}
\newcommand{\KK}{\mathbf{K}}
\newcommand{\kk}{\mathbf{k}}
\newcommand{\FF}{\mathbf{F}}
\newcommand{\NN}{\mathbf{N}}

\newcommand{\PP}{\mathbf{P}}

\DeclareMathOperator{\CIE}{CIE}

\DeclareMathOperator{\Ch}{Ch}

\DeclareMathOperator{\std}{std}

\usepackage{mathrsfs}
\newcommand{\G}{\mathscr{G}}
\newcommand{\BG}{\mathscr{G}_b}
\def\c{c}
\def\ideal{\mathscr{C}}
\def\Ker{\mathscr{K}}
\def\Bideal{\mathscr{C}_b}
\def\BKer{\mathscr{K}_b}
\newcommand{\FFF}{\mathscr{F}}
\def\GammaNC{\Gamma^{\text{nc}}}
\def\BGammaNC{\Gamma^{\text{nc}}_b}
\def\BGamma{\Gamma_b}
\def\oG{\overline{G}}
\def\QSym{{\it QSym}}

\def\WQSym{{\bf WQSym}}
\def\bv{{\bf v}}
\def\bw{{\bf w}}
\def\eval{{\rm eval}}

\def\gf#1#2{\genfrac{}{}{0pt}{}{#1}{#2}}
\renewcommand{\phi}{\varphi}
\def\comm{\phi_c}
\def\ex{\text{ex}}
\def\MP{\text{MP}}
\def\NNN{\mathscr{N}}

\author[V.~Féray]{Valentin Féray}

\address{Institut für Mathematik, Universität Zürich, Winterthurerstrasse 190, 8057 Zürich, Switzerland}
\email{valentin.feray@math.uzh.ch}

\thanks{
VF has been partially supported by ANR project PSYCO ANR-11-JS02-001 and by SNSF grant ``Dual combinatorics of Jack polynomials''.}

\keywords{partially ordered set, quasi-symmetric functions}

\subjclass[2010]{06A07, 05E05.}

\title
{Cyclic inclusion-exclusion}

\begin{document}

\maketitle

\begin{abstract}
    Following the lead of Stanley and Gessel, we consider a morphism
    which associates to an acyclic directed graph (or a poset) a quasi-symmetric function.
    The latter is naturally defined as multivariate generating series
    of non-decreasing functions on the graph.

    We describe the kernel of this morphism, using a simple combinatorial operation
    that we call {\em cyclic inclusion-exclusion}.
    Our result also holds for the natural noncommutative analog
    and for the commutative and noncommutative restrictions to bipartite graphs.

    An application to the theory of Kerov character polynomials is given.
\end{abstract}

\section{Introduction}
Given a poset $P=(V,<_P)$ or an acyclic directed graph $G=(V,E_G)$,
it is natural to consider the following multivariate generating function
\begin{equation}
    \label{eq:DefGamma}
    \Gamma_{P/G}(x_1,x_2,\cdots) = \sum_{\gf{f : V \to \N}{f\text{ non-decreasing}}} \prod_{v \in V} x_{f(v)} 
\end{equation}
where $\N$ is the set of positive integers and
{\em non-decreasing} means that $i<_P j$ (respectively $(i,j) \in E$) implies $f(i) \le_G f(j)$.
An example is given in \cref{sub:Gessel}.\bigskip

This is a quite classical object in the algebraic combinatorics literature:
using the terminology of the seminal book of Stanley~\cite{StanleyOrderedStructures},
the non-decreasing functions on posets correspond to $P$-partitions when $P$ has a {\em natural labelling}
(up to reversing the order of $P$).
The generating function $\Gamma_P$ has then
been considered by Gessel~\cite{GesselQSym},\
see also Stanley's textbook \cite[Section 7.19]{Stanley:EC2}.
While not symmetric in the variables $x_1,x_2,\cdots$,
this function exhibits some weaker symmetry property and 
belongs to the now well-studied algebra of {\em quasi-symmetric functions}\footnote{
In fact, the terminology {\em quasi-symmetric function} was introduced in~\cite{GesselQSym}, precisely to study $\Gamma_P$.
\label{footnote:QSym}}.

Although posets are more common objects in the literature,
the results of this paper are better formulated
in terms of acyclic directed graphs.
Obviously the map $\Gamma:G \to \Gamma_G$ defined by \eqref{eq:DefGamma} can be extended by linearity
to the vector space of formal linear combination of acyclic graphs,
that we call here the {\em graph algebra}.
A hint of the relevance of this map is the following:
there are some natural Hopf algebra structures on the graph algebras
and on quasi-symmetric functions, which turns the map $\Gamma$ into a Hopf algebra morphism,
see \cref{sub:Hopf}.
However, we shall only focus here on the linear structure.\bigskip

The main result of the present paper is a combinatorial description of the kernel
of the application $\Gamma$ from the graph algebra to quasi-symmetric functions
(Theorem~\ref{thm:Com_NonRestr}).
This description relies on a simple combinatorial operation, 
that we call {\em cyclic inclusion-exclusion}
(the definition and an example are given in \cref{sub:DefCIE}).
Before giving some background on this operation,
let us mention that this description of the kernel of $\Gamma$ is quite robust.
Indeed, we shall prove that cyclic inclusion-exclusion also describes the kernel 
of some variants of $\Gamma$, namely:
\begin{itemize}
    \item working with labeled (acyclic directed) graphs,
        it is natural to associate to them
        a multivariate generating series in {\em noncommuting variables}
        that lives in the algebra of {\em word quasi-symmetric functions} 
        \cite{NovelliThibonTrialgebras}
        (this algebra is also sometimes called {\em quasi-symmetric functions
        in noncommuting variables}, see \cite{BergeronZabrockiSymQSymFreeCofree});
        we give a description of the kernel of this application
        (denoted $\GammaNC$) in \cref{thm:NonCom_NonRestr}.
    \item We also consider restrictions of the linear maps $\Gamma$ and $\GammaNC$
        to bipartite graphs\footnote{
        A directed graph $B$ is called {\em bipartite} if its vertex set can be split as $V \sqcup W$,
        so that for each edge $(v,w) \in E$, then $v$ lies in $V$ and $w$ in $W$.
        }.
        Analogs of \cref{thm:Com_NonRestr,thm:NonCom_NonRestr} in the bipartite setting
        are given in \cref{thm:Com_Bip,thm:NonCom_Bip}.

        Note that, in the bipartite case, acyclic graphs and posets are the same objects.
        We explain below our motivation to consider such a restriction.\bigskip
\end{itemize}
In all these cases, a byproduct of our proof is
the surjectivity of the morphism $\Gamma$ (respectively $\GammaNC$
and their restriction to bipartite graphs).
The surjectivity in the commutative non restricted case was observed by Stanley
\cite[Note p7]{StanleyDescentConnectivitySets},
answering a question of Billera and Reiner.\bigskip

Our proofs use a combination
of basic linear algebra, graph combinatorics and (word) quasi-symmetric function manipulations.
In the noncommutative/labeled case,
we first exhibit a family of graphs
so that their images form a $\Z$-basis of word quasi-symmetric functions. 
Then, we show
that these graphs span the quotient of the graph algebra
by cyclic inclusion-exclusion relations.
With an easy linear algebra argument, this concludes the proof.

The commutative/unlabeled case can be obtained as a corollary
of the noncommutative/labeled case.
On the contrary, restrictions to bipartite graphs must be considered
separately from the non-restricted setting (see \cref{rem:why_non_trivial}).
The general structure of the proof is the same in the bipartite setting,
although the arguments themselves are quite different.

Along the way, this gives natural bases of the 
word quasi-symmetric function ring:
in particular, we find natural analogs
of Gessel fundamental basis \cite{GesselQSym}
and of two bases considered respectively by
R. Stanley \cite{StanleyDescentConnectivitySets}
and K. Luoto \cite{LuotoBasisQSym}.
The analog of Luoto basis has been considered recently by the author and several coauthors
in ~\cite{AvecLesJeansFPSAC}.
It could be of interest for future work on the subject,
as it has the nice property that any function $\GammaNC(B)$, where $B$ is a bipartite graph,
can be written as a multiplicity free sum of basis elements (see \cref{prop:Mult_Free_Sum}).
\bigskip

Let us now say a word about the cyclic inclusion-exclusion operation and how it has proved useful so far.

It has been introduced by the author (but not under this name)
in the article~\cite{F'eray2008} in the proof of a conjecture of Kerov
on irreducible character values of the symmetric group.
In fact, in this work, a two-alphabet variant of $\Gamma_B$
is considered for bipartite graphs $B$.
%It has then been conjectured in the author's PhD thesis \cite[Conjecture 1.2 page 27]{MyPhD}
%that cyclic inclusion-exclusion describes all relations among the functions $N_B$.
%The results of this article imply this conjecture, as any relation among the $N_B$ is also 
%a relation among the $\Gamma_B$.
%As explained in~\cite[remark on page 76]{MyPhD}, the fact that all relations between the $N_B$
%are consequences of cyclic inclusion-exclusion relations
%allow to simplify the proof of Kerov's ex-conjecture.
%%%%% THIS IS NOW EXPLAINED IN THIS PAPER %%%%%%%%%%%%%%%%%
We explain in \cref{sect:Kerov} how \cref{thm:Com_Bip} can be used to simplify
and generalize the proof of the former conjecture of Kerov.

Remarkably, this operation of cyclic inclusion-exclusion has also been fruitful in a quite
different context in~\cite{BoussicaultF'eray2009}:
the purpose of this paper was to study some rational functions considered by Greene~\cite{GreeneRationalIdentity}.
These functions are indexed by posets
and defined as sums over linear extensions of the indexing poset :
as such, they automatically verify cyclic inclusion-exclusion relations.
This gives an efficient way to compute these rational functions
and a powerful tool to study them; see~\cite{BoussicaultF'eray2009}. \bigskip

The paper is organized as follows:
\cref{sec:Prelim} introduces some standard definitions and notations.
In \cref{sec:CIE}, cyclic inclusion-exclusion is defined and it is proved
that this combinatorial construction gives some elements in the kernel of $\GammaNC$.
\cref{sec:NonRestr} deals with the non-restricted setting
and contains the proof of our main theorem in this case: the kernel of $\Gamma$
and $\GammaNC$ are spanned by the cyclic inclusion-exclusion relations
(\cref{thm:NonCom_NonRestr,thm:Com_NonRestr}).
The analogous results for the restrictions to bipartite graphs 
(\cref{thm:NonCom_Bip,thm:Com_Bip})
are established in \cref{sec:Bip}.
Finally, \cref{sect:Kerov} describes the application of \cref{thm:Com_Bip}
to the theory of Kerov character polynomials.
\section{Preliminaries}
\label{sec:Prelim}
\subsection{Labelled and unlabeled graphs}
\label{sec:graphs}
\begin{definition}
    A {\em labeled (directed) graph} $G$ is a pair $(V,E)$ where $V$ is a finite set
    and $E$ a subset of $V \times V$.

%    For an edge $e=(v_1,v_2)$ in $E$, we denote $\alpha(e)=v_1$ and $\omega(e)=v_2$ 
%    its first and second component and call them respectively {\em tail}
%    and {\em head} of the edge.
%
    A {\em directed cycle} is a list $(v_1,\dots, v_k)$ of vertices of $G$ such that
    $(v_1,v_2)$, $(v_2,v_3)$, $\cdots$, $(v_{k-1},v_k)$ and $(v_k,v_1)$
    are edges of $G$.

    A graph without directed cycles is called {\em acyclic}.
\end{definition}
For a non-negative integer $n$, we denote $[n]$ the set of positive integers smaller or equal to $n$.
In this paper, we only consider graphs with vertex set $V=[n]$, for some integer $n$.

Denote $S_n$ the group of permutations of $n$, that is of bijections from $[n]$ to $[n]$.
If $\sigma$ is a permutation of $n$ and $G$ a graph with vertex set $[n]$, 
then we consider the graph $\sigma(G)$ with vertex set $[n]$ and edge set
\[\sigma(E) = \big\{ \{\sigma(v_1),\sigma(v_2)\} \text{ with } \{v_1,v_2\} \in E \big\}.\]
\begin{definition}
    An {\em unlabeled (directed) graph} $\oG$ is an equivalence class of labeled directed graphs
    under the relation
    \[G \sim G' \Longleftrightarrow \exists \sigma \in S_n \text{ s.t. } G=\sigma(G'). \]

    As this relation preserves acyclicity of graphs,
    there is a natural notion of unlabeled acyclic graphs.
    Namely an unlabeled graph is {\em acyclic} if
    at least one (or equivalently all) labeled graph(s) in the class is(are) acyclic.
\end{definition}
We denote by $\G$ (respectively $\ov{\G}$) the vector space of linear combinations
of labeled (respectively unlabeled) acyclic graphs.
Then $\G$ (respectively $\ov{\G}$) is a graded vector space: the $d$-th homogeneous component
$\G_d$ (respectively $\ov{\G}_d$)
is by definition spanned by labeled (respectively unlabeled) graphs with vertex set $[d]$
(respectively with $d$ vertices).
The action of the symmetric group $S_d$ on graphs with vertex set $[d]$
can be extended to $\G_d$.
Then $\ov{\G}$ is the quotient of $\G$ by the vector space 
\[\{x-\sigma(x), x \in \G_d, \sigma \in S_d \text{ for some }d \ge 1\}.\]
We denote this quotient map by $\phi_u$ ($u$ stands for {\em unlabeling}).

\subsection{Quasi-symmetric functions}

As mentioned in \cref{footnote:QSym},
the ring of quasi-symmetric functions was introduced by I.~Gessel \cite{GesselQSym} 
and may be seen as a generalization of the notion of symmetric functions.
A modern introduction can be found in \cite[Section 7.19]{Stanley:EC2} or \cite[Section 3.3]{BookQSym}.

Let $n$ be a nonnegative integer.
A {\em composition} (or {\em integer composition}) of $n$ is a sequence $I=(i_1,i_2,\dots,i_r)$
of positive integers, whose sum is equal to $n$.
The notation $I\vDash n$ means that $I$ is a composition of $n$
and $\ell(I)$ denotes the number of parts of $I$.
In numerical examples, it is customary to omit parentheses and commas.
For example, $212$ is a composition of $5$.
%Given two compositions $I=(i_1,i_2,\dots,i_r)$ and $J=(j_1,j_2,\dots,j_s)$
%their {\em concatenation} is $I \cdot J=(i_1,\dots,i_r,j_1,\dots,j_s)$.

Consider the algebra
$\C[X]$ of polynomials\footnote{
Throughout the paper, we call ``polynomial in infinitely many variables'' an element of the inverse limit of
the inverse system of graded algebras
$(\C[x_1,\dots,x_n])_{n \ge 0}$ (the projection from $\C[x_1,\dots,x_n,x_{n+1}]$ to $\C[x_1,\dots,x_n]$
sends $x_{n+1}$ to $0$). In particular, it can have infinitely many monomials,
but must have a bounded degree.}
in a totally ordered alphabet of commutative variables $X=\{x_1,x_2,\dots\}.$
Monomials $X^\bv=x_1^{v_1} x_2^{v_2} \cdots$
correspond to sequences $\bv=v_1,v_2,\dots$ with finitely many
non-zero entries.
For such a sequence, we denote by $\bv_{\leftarrow}$ the list obtained by
omitting the zero entries.
\begin{definition}
A polynomial $P\in\C[X]$ is said to be {\em quasi-symmetric} if and only if
for any $\bv$ and $\bw$ such that $\bv_{\leftarrow}=\bw_{\leftarrow}$,
the coefficients of $X^\bv$ and $X^\bw$ in $P$ are equal.

One can easily prove that the set of quasi-symmetric polynomials 
is a subalgebra of $\C[X]$, called quasi-symmetric function ring and denoted
$\QSym$.
\end{definition}

It should be clear that any symmetric polynomial is quasi-symmetric.
The algebra $\QSym$ of quasi-symmetric functions has a basis of monomial
quasi-symmetric functions $(M_I)$ indexed by compositions $I= (i_1,\dots, i_r)$,
where
\begin{equation}\label{eq:QSymM}
    M_I = \sum_{k_1<\cdots<k_r} x_{k_1}^{i_1} \cdots x_{k_r}^{i_r} .
\end{equation}

In particular, the dimension of the homogeneous space $\QSym_n$ of degree $n$ 
of $\QSym$ is the number of compositions of $n$, that is $2^{n-1}$ for $n\ge 1$.

\begin{example}
    \label{Ex:MI}
    $M_{212}= \sum_{k<l<m} x_k^2 x_l x_m^2$.
\end{example}

\subsection{Word quasi-symmetric functions}
\label{sec:WQSym}
The natural noncommutative analog of $\QSym$ is the algebra of
\emph{word quasi symmetric functions}, denoted by $\WQSym$.
We recall here its construction, following the presentation of
Bergeron and Zabrocki \cite[Section 5.2]{BergeronZabrockiSymQSymFreeCofree}.
An equivalent, but slightly different presentation, using packed words instead
of set compositions, can be found in a paper of Novelli and Thibon \cite[Section 2.1]{NovelliThibonTrialgebras}.

Consider a totally ordered alphabet of noncommuting variables $\{a_1,a_2,\dots\}$.
Monomials in these variables are canonically indexed
by finite words $w$ on the alphabet $\N$ as follows
\[\bm{a}_w = a_{w_1} \, a_{w_2} \dots a_{w_{|w|}}. \]
The {\em evaluation} $\eval(w)$
of a word $w$ is the integer sequence $v=(v_1,v_2,\dots)$,
where $v_i$ is the number of letters $i$ in $w$.
Then the commutative image of $\bm{a}_w$ is $\bm{X}^{\eval(w)}$.

In the noncommutative framework, set compositions\footnote{
Set compositions are also called sometimes {\em ordered set partitions}.}
play the role of compositions.
A {\em set composition} of $n$ is an {\em (ordered)} list $\II=(I_1,\dots,I_p)$
of pairwise disjoint non-empty subsets of $\{1,\dots,n\}$,
whose union is $\{1,\dots,n\}$.
In numerical example, we sort integers inside a part and use a vertical
line to separate the parts.
For example, the set composition $(\{1,5\},\{3,4,6\},\{2\})$
is denoted $15|346|2$.

To a word $w$ on the (ordered) alphabet $\N$ of length $\ell$,
we associate the set composition $\II=\Delta(w)$ such that 
$j \in I_{|\{w_r:w_r \le w_i\}|}$ (for every $j$ in $[\ell]$).
For example $\Delta(275525)=15|346|2$.
\begin{definition}
    A polynomial\footnote{
As in the commutative setting, polynomials in infinitely may variables should be formally defined
as inverse limit of a sequence of polynomials in finitely many variables.
} in noncommuting variables $a_1,a_2,\dots$
is a {\em word quasi symmetric function} if and only if
$a_v$ and $a_w$ are equal
as soon as $\Delta(v)$ and $\Delta(w)$ coincide.
\end{definition}

One can easily prove that the set $\WQSym$ of word quasi symmetric functions is 
an algebra.
A linear basis of $\WQSym$ is given as follows:
\[ \M_\II = \sum_{\gf{w \text{ s.t.}}{\Delta(w)=\II}} \bm{a}_w. \]

Clearly, if we only remember the sizes of the sets in a  set composition $\II$,
we get an integer composition that we denote $\comm(\II)$ ($c$ stands for commuting).
For example, $\comm(15|346|2)=231$.
With this notation, the commutative image of $\M_\II$ is $M_{\comm(\II)}$.
Therefore, sending the variables $a_1,a_2,\dots$ to their commutative analogs $x_1,x_2,\dots$
defines a surjective projection from $\WQSym$ to $\QSym$, that we abusively also denote $\comm$.
This projection can be alternatively realized as follows: the symmetric group $S_n$ acts on
the homogeneous component $\WQSym_n$ of $\WQSym$ of degree $n$ by permuting factors in every monomial.
Then $\QSym$ is the quotient of $\WQSym$ by the ideal spanned linearly by
\[\{x-\sigma(x), x \in \WQSym_d, \sigma \in S_d \text{ for some }d \ge 1\}.\]

To finish, let us mention that 
the ordered Bell numbers~\cite[A000670]{OEIS} count set compositions of $[n]$,
and thus give the dimension of the homogeneous subspace of degree $n$ of $\WQSym$.

\begin{example}
    Consider the set composition $\bm{I}=25|4|13$.
    Its evaluation is the integer composition $212$.
    Then the associate basis element of $\WQSym$ is
    \[\M_\II= \sum_{k<l<m} a_m \, a_k \, a_m \, a_l \, a_k.\]
    It is easy to check that its commutative image is $M_{212}$
    (given in \cref{Ex:MI}), as claimed.
\end{example}

\subsection{Gessel's morphism}
\label{sub:Gessel}
\begin{definition}
    Let $G$ be a graph on vertex set $[n]$.
    A function $f: [n] \to \N$ is called {\em $G$ non-decreasing}
    if, for any edge $(i,j)$ in $E$, one has $f(i) \le f(j)$.

    For a labeled graph $G$, we define $\GammaNC(G)$ as
    \[\GammaNC(G) := \sum_{\gf{f: [n] \to \N}{f\ G \text{ non-decreasing}}}
    a_{f(1)} \dots a_{f(n)}.\]
\end{definition}
\begin{example}
    Consider the graph $G=\begin{array}{c}\begin{tikzpicture}[font=\tiny,scale=.3]
        \tikzstyle{vertex}=[circle,draw=black,inner sep=.3pt];
        \node[vertex] (v3) at (0,0) {$3$};
        \node[vertex] (v1) at (2,0) {$1$};
        \node[vertex] (v2) at (1,1.5) {$2$};
        \node[vertex] (v4) at (3,1.5) {$4$};
        \draw[->] (v3) -- (v2);
        \draw[->] (v1) -- (v2);
        \draw[->] (v1) -- (v4);
    \end{tikzpicture}\end{array}$, then
    \[\GammaNC(G)= \! \! 
    \sum_{\gf{k_1,k_2,k_3,k_4}{k_3 \le k_2,\ k_1 \le k_2,\ k_1 \le k_4}}  \! \! a_{k_1} a_{k_2} a_{k_3} a_{k_4}. \]
\end{example}

It is clear that $\GammaNC(G)$ is a word quasi-symmetric function.
Therefore, $\GammaNC$ extends as a linear application from
$\G$ to $\WQSym$.\medskip

The image $\comm(\GammaNC(G))$ of $\GammaNC(G)$ in $\QSym$
does not change if we replace $G$ by an isomorphic labeled graph $G'=\sigma(G)$.
Thus the morphism \[\comm \circ \GammaNC : \G \to \QSym\] factorizes through the quotient
$\ov{\G}$ and defines a morphism $\ov{\G} \to \QSym$.
We recover of course the morphism $\Gamma$ defined by \cref{eq:DefGamma}
in the introduction and studied by Gessel in \cite{GesselQSym}.

In other words, we have the commutative diagram
\[ \begin{tikzpicture}[auto]
    \node (G) at (0,0) {$\G$};
    \node (oG) at (0,-2) {$\ov{\G}$};
    \node (WQ) at (3,0) {$\WQSym$};
    \node (Q) at (3,-2) {$\QSym$};
    \draw[->] (G) to node {$\GammaNC$} (WQ);
    \draw[->] (G) to node [swap] {$\phi_u$} (oG);
    \draw[->] (WQ) to node {$\phi_c$} (Q);
    \draw[->] (oG) to node [swap] {$\Gamma$} (Q);
\end{tikzpicture}.\]

\subsection{Hopf algebra structures}
\label{sub:Hopf}
In this Section, we mention known Hopf algebra structures
of the spaces $\G$, $\ov{G}$, $\QSym$ and $\WQSym$ which turns
the morphisms described above into Hopf algebra morphisms.

As we focus in this paper on linear structures, this material won't be used 
and is only presented as additional motivation.
This explains the lack of details and examples in this Section.\medskip

The space $\G$ has a Hopf algebra structure with the following product and coproduct:
\begin{itemize}
    \item The product of $G$ and $G'$ is $G \sqcup (G')^{\uparrow |G|}$, where
        $(G')^{\uparrow |G|}$ means that we have shifted all vertex labels in $G'$ by the number $|G|$ of vertices
        of $G$, so that the disjoint union is a graph with vertex set $[|G|+|G'|]$.
    \item The coproduct of a graph $G$ with vertex set $[n]$ is given by
        \[ \Delta(G) = \sum_I \std(G[I]) \times \std\big(G\big[[n] \backslash I \big]\big),\]
        where the sum runs over subsets $I$ of $[n]$ such that there is no edges going from $[n] \backslash I$ to $I$.
        Here, $G[I]$ and $G\big[ [n] \backslash I \big]$ denote 
        the graphs induced by $G$ on $I$ and $[n] \backslash I$
        and $\std(H)$ consists in relabelling vertices of $H$ in an order-preserving way so that the
        result has vertex set $[m]$ for some integer $m$.
\end{itemize}
These operations are compatible with the action of symmetric groups
(in some sense that has to be precised)
and thus are also naturally defined on the quotient $\ov{\G}$.

The spaces $\QSym$ and $\WQSym$ have natural algebra structures inherited from the polynomial algebras,
in which they live.
It is also possible to define these products on the basis by some combinatorial operations
on integer compositions and set compositions.

The coproducts of $\QSym$ and $\WQSym$ are given on the bases by the formulas:
\begin{align*}
    \Delta(M_I)&=\sum_{k=0}^{\ell(I)} M_{(i_1,\dots,i_k)}\otimes M_{(i_{k+1},\dots,i_{\ell(I)})};\\
    \Delta(\M_\II)&=\sum_{k=0}^{\ell(\II)} \M_{(I_1,\dots,I_k)} \otimes \M_{(I_{k+1},\dots,I_{\ell(\II)})}.
\end{align*}

It is not difficult to check that these multiplication and comultiplication structures
are compatible with all morphisms from the previous Section.

\begin{remark}
    A detailed description of the Hopf algebra structure of $\QSym$ can be found for example in \cite[Section 3.3]{BookQSym}.
    For $\WQSym$, we refer to \cite[Section 2.1]{NovelliThibonTrialgebras}.

    The Hopf algebra structure presented here for acyclic graphs is similar to the one
    considered on posets by Aguiar and Mahajan in \cite[Section 13.1]{AguiarBook} with the formalism of Hopf monoids.
    It should be stressed that this Hopf algebra structure is different from the so-called
    {\em incidence Hopf algebra}, another Hopf algebra on posets considered in the litterature,
    see {\em e.g.} \cite{EhrenborgQSymPosets} and references therein.
\end{remark}

\section{Cyclic inclusion-exclusion}
\label{sec:CIE}
\subsection{Definition and example}
\label{sub:DefCIE}
Let $G$ be a directed graph.
Consider $G$ as a non directed graph and assume that it contains a cycle $C$.

Formally, such a cycle $C$ is
a list
$C=(x_1,\dots,x_k)$
such that, for $1\leq i \leq k$,
\begin{itemize}
    \item either $(x_i,x_{i+1})$ is an edge of $G$;
    \item or $(x_{i+1},x_i)$ is an edge of $G$,
\end{itemize}
where, by convention, $x_{k+1}:=x_1$.
In the first case, we say that $(x_i,x_{i+1})$ is in a set $C^+$.
In the second case, we say that $(x_i,x_{i+1})$ is in $C^-$.

Another description of the sets $C^+$ and $C^-$ is the following.
Edges of $C$ have two orientations:
\begin{itemize}
    \item their orientation in the cycle $C$;
    \item and their orientation as edges of $G$.
\end{itemize}
We denote $C^+$ (respectively $C^-$) the set of edges of $C$,
for which these two orientations coincide (respectively do not coincide).

Finally, we define the following element of the graph algebra $\G$:
\[\CIE_{G,C} = \sum_{D \subseteq C^+} (-1)^{|D|} G \setminus D,\]
where $G \setminus D$ is the (directed acyclic) graph obtained from $G$
by erasing the edges in $D$
(and keeping the same set of vertices).

\begin{example}
    \label{ex:Graph_for_CIE}
    Consider the graph $G_\ex$ from \cref{fig:Graph_for_CIE}.
    The non-oriented version of $G_\ex$ contains several cycles, among them
    $C_\ex=(6,2,3,5,1)$.
    This cycle is represented as a subgraph of $G_\ex$ in \cref{fig:Graph_for_CIE}
    with the two orientations described above.
    Then the set $C_\ex^+$ is equal to $\{ (6,2),(2,3),(3,5) \}$
    and $\CIE_{G_\ex,C_\ex}$ is given in \cref{fig:Graph_for_CIE}.

    \begin{figure}[t]
                \[
        G_\ex=\begin{array}{c}\begin{tikzpicture}[font=\tiny,scale=.5]
        \tikzstyle{vertex}=[circle,draw=black,inner sep=.3pt];
        \node[vertex] (v3) at (0,0) {$4$};
        \node[vertex] (v1) at (2,0) {$6$};
        \node[vertex] (v2) at (1,1.5) {$2$};
        \node[vertex] (v4) at (3,1.5) {$1$};
        \node[vertex] (v5) at (1,3) {$3$};
        \node[vertex] (v6) at (3,3) {$7$};
        \node[vertex] (v7) at (2,4.5) {$5$};
        \draw[->] (v3) -- (v2);
        \draw[->] (v1) -- (v2);
        \draw[->] (v1) -- (v4);
        \draw[->] (v2) -- (v5);
        \draw[->] (v5) to [bend left] (v7);
        \draw[->] (v4) to [bend left] (v7);
        \draw[->] (v4) -- (v6);
        \draw[->] (v6) -- (v7);
       \end{tikzpicture}\end{array}
       \qquad
        C_\ex=\begin{array}{c}\begin{tikzpicture}[font=\tiny,scale=.5]
        \tikzstyle{vertex}=[circle,draw=black,inner sep=.3pt];
        \node[vertex] (v1) at (2,0) {$6$};
        \node[vertex] (v2) at (1,1.5) {$2$};
        \node[vertex] (v4) at (3,1.5) {$1$};
        \node[vertex] (v5) at (1,3) {$3$};
        \node[vertex] (v7) at (2,4.5) {$5$};
        \begin{scope}[decoration={markings,           
                        mark=at position .5 with {\arrow{>}}}]
        \draw[->,postaction={decorate}] (v1) -- (v2);
        \draw[->,postaction={decorate}] (v2) -- (v5);
        \draw[->,postaction={decorate}] (v5) to [bend left] (v7);
    \end{scope}
        \begin{scope}[decoration={markings,           
                        mark=at position .5 with {\arrow{<}}}]
        \draw[->,postaction={decorate}] (v1) -- (v4);
        \draw[->,postaction={decorate}] (v4) to [bend left] (v7);
    \end{scope}
       \end{tikzpicture}\end{array}
        \]
        \begin{multline*}
            \CIE_{G_\ex,C_\ex}=\begin{array}{c}\begin{tikzpicture}[font=\tiny,scale=.4]
        \tikzstyle{vertex}=[circle,draw=black,inner sep=.3pt];
        \node[vertex] (v3) at (0,0) {$4$};
        \node[vertex] (v1) at (2,0) {$6$};
        \node[vertex] (v2) at (1,1.5) {$2$};
        \node[vertex] (v4) at (3,1.5) {$1$};
        \node[vertex] (v5) at (1,3) {$3$};
        \node[vertex] (v6) at (3,3) {$7$};
        \node[vertex] (v7) at (2,4.5) {$5$};
        \draw[->] (v3) -- (v2);
        \draw[->] (v1) -- (v2);
        \draw[->] (v1) -- (v4);
        \draw[->] (v2) -- (v5);
        \draw[->] (v5) to [bend left] (v7);
        \draw[->] (v4) to [bend left] (v7);
        \draw[->] (v4) -- (v6);
        \draw[->] (v6) -- (v7);
       \end{tikzpicture}\end{array}
       -\begin{array}{c}\begin{tikzpicture}[font=\tiny,scale=.4]
        \tikzstyle{vertex}=[circle,draw=black,inner sep=.3pt];
        \node[vertex] (v3) at (0,0) {$4$};
        \node[vertex] (v1) at (2,0) {$6$};
        \node[vertex] (v2) at (1,1.5) {$2$};
        \node[vertex] (v4) at (3,1.5) {$1$};
        \node[vertex] (v5) at (1,3) {$3$};
        \node[vertex] (v6) at (3,3) {$7$};
        \node[vertex] (v7) at (2,4.5) {$5$};
        \draw[->] (v3) -- (v2);
 %       \draw[->] (v1) -- (v2);
        \draw[->] (v1) -- (v4);
        \draw[->] (v2) -- (v5);
        \draw[->] (v5) to [bend left] (v7);
        \draw[->] (v4) to [bend left] (v7);
        \draw[->] (v4) -- (v6);
        \draw[->] (v6) -- (v7);
       \end{tikzpicture}\end{array}
       -\begin{array}{c}\begin{tikzpicture}[font=\tiny,scale=.4]
        \tikzstyle{vertex}=[circle,draw=black,inner sep=.3pt];
        \node[vertex] (v3) at (0,0) {$4$};
        \node[vertex] (v1) at (2,0) {$6$};
        \node[vertex] (v2) at (1,1.5) {$2$};
        \node[vertex] (v4) at (3,1.5) {$1$};
        \node[vertex] (v5) at (1,3) {$3$};
        \node[vertex] (v6) at (3,3) {$7$};
        \node[vertex] (v7) at (2,4.5) {$5$};
        \draw[->] (v3) -- (v2);
        \draw[->] (v1) -- (v2);
        \draw[->] (v1) -- (v4);
 %       \draw[->] (v2) -- (v5);
        \draw[->] (v5) to [bend left] (v7);
        \draw[->] (v4) to [bend left] (v7);
        \draw[->] (v4) -- (v6);
        \draw[->] (v6) -- (v7);
       \end{tikzpicture}\end{array}
       -\begin{array}{c}\begin{tikzpicture}[font=\tiny,scale=.4]
        \tikzstyle{vertex}=[circle,draw=black,inner sep=.3pt];
        \node[vertex] (v3) at (0,0) {$4$};
        \node[vertex] (v1) at (2,0) {$6$};
        \node[vertex] (v2) at (1,1.5) {$2$};
        \node[vertex] (v4) at (3,1.5) {$1$};
        \node[vertex] (v5) at (1,3) {$3$};
        \node[vertex] (v6) at (3,3) {$7$};
        \node[vertex] (v7) at (2,4.5) {$5$};
        \draw[->] (v3) -- (v2);
        \draw[->] (v1) -- (v2);
        \draw[->] (v1) -- (v4);
        \draw[->] (v2) -- (v5);
 %       \draw[->] (v5) to [bend left] (v7);
        \draw[->] (v4) to [bend left] (v7);
        \draw[->] (v4) -- (v6);
        \draw[->] (v6) -- (v7);
       \end{tikzpicture}\end{array}\\
       +\begin{array}{c}\begin{tikzpicture}[font=\tiny,scale=.4]
        \tikzstyle{vertex}=[circle,draw=black,inner sep=.3pt];
        \node[vertex] (v3) at (0,0) {$4$};
        \node[vertex] (v1) at (2,0) {$6$};
        \node[vertex] (v2) at (1,1.5) {$2$};
        \node[vertex] (v4) at (3,1.5) {$1$};
        \node[vertex] (v5) at (1,3) {$3$};
        \node[vertex] (v6) at (3,3) {$7$};
        \node[vertex] (v7) at (2,4.5) {$5$};
        \draw[->] (v3) -- (v2);
 %       \draw[->] (v1) -- (v2);
        \draw[->] (v1) -- (v4);
 %       \draw[->] (v2) -- (v5);
        \draw[->] (v5) to [bend left] (v7);
        \draw[->] (v4) to [bend left] (v7);
        \draw[->] (v4) -- (v6);
        \draw[->] (v6) -- (v7);
       \end{tikzpicture}\end{array}
       +\begin{array}{c}\begin{tikzpicture}[font=\tiny,scale=.4]
        \tikzstyle{vertex}=[circle,draw=black,inner sep=.3pt];
        \node[vertex] (v3) at (0,0) {$4$};
        \node[vertex] (v1) at (2,0) {$6$};
        \node[vertex] (v2) at (1,1.5) {$2$};
        \node[vertex] (v4) at (3,1.5) {$1$};
        \node[vertex] (v5) at (1,3) {$3$};
        \node[vertex] (v6) at (3,3) {$7$};
        \node[vertex] (v7) at (2,4.5) {$5$};
        \draw[->] (v3) -- (v2);
  %      \draw[->] (v1) -- (v2);
        \draw[->] (v1) -- (v4);
        \draw[->] (v2) -- (v5);
  %      \draw[->] (v5) to [bend left] (v7);
        \draw[->] (v4) to [bend left] (v7);
        \draw[->] (v4) -- (v6);
        \draw[->] (v6) -- (v7);
       \end{tikzpicture}\end{array}
       +\begin{array}{c}\begin{tikzpicture}[font=\tiny,scale=.4]
        \tikzstyle{vertex}=[circle,draw=black,inner sep=.3pt];
        \node[vertex] (v3) at (0,0) {$4$};
        \node[vertex] (v1) at (2,0) {$6$};
        \node[vertex] (v2) at (1,1.5) {$2$};
        \node[vertex] (v4) at (3,1.5) {$1$};
        \node[vertex] (v5) at (1,3) {$3$};
        \node[vertex] (v6) at (3,3) {$7$};
        \node[vertex] (v7) at (2,4.5) {$5$};
        \draw[->] (v3) -- (v2);
        \draw[->] (v1) -- (v2);
        \draw[->] (v1) -- (v4);
  %      \draw[->] (v2) -- (v5);
  %      \draw[->] (v5) to [bend left] (v7);
        \draw[->] (v4) to [bend left] (v7);
        \draw[->] (v4) -- (v6);
        \draw[->] (v6) -- (v7);
       \end{tikzpicture}\end{array}
       -\begin{array}{c}\begin{tikzpicture}[font=\tiny,scale=.4]
        \tikzstyle{vertex}=[circle,draw=black,inner sep=.3pt];
        \node[vertex] (v3) at (0,0) {$4$};
        \node[vertex] (v1) at (2,0) {$6$};
        \node[vertex] (v2) at (1,1.5) {$2$};
        \node[vertex] (v4) at (3,1.5) {$1$};
        \node[vertex] (v5) at (1,3) {$3$};
        \node[vertex] (v6) at (3,3) {$7$};
        \node[vertex] (v7) at (2,4.5) {$5$};
        \draw[->] (v3) -- (v2);
 %       \draw[->] (v1) -- (v2);
        \draw[->] (v1) -- (v4);
 %       \draw[->] (v2) -- (v5);
 %       \draw[->] (v5) to [bend left] (v7);
        \draw[->] (v4) to [bend left] (v7);
        \draw[->] (v4) -- (v6);
        \draw[->] (v6) -- (v7);
    \end{tikzpicture}\end{array}\end{multline*}
        \caption{Graph $G_\ex$, cycle $C_\ex$ and the graph algebra element $\CIE_{G_\ex,C_\ex}$
        from \cref{ex:Graph_for_CIE}.}
        \label{fig:Graph_for_CIE}
    \end{figure}
\end{example}

\subsection{Cyclic inclusion-exclusion relations}
\begin{proposition}\label{prop:CIE_in_Kernel}
    For any graph $G$ and cycle $C$ of $G$, one has:
    \[\GammaNC(\CIE_{G,C})=0.\]
\end{proposition}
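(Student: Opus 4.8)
The plan is to compute the coefficient of each fixed monomial in $\GammaNC(\CIE_{G,C})$ and to show it vanishes. First I would note that, by definition, $\GammaNC(H)=\sum_f a_{f(1)}\cdots a_{f(n)}$, the sum being over $H$-nondecreasing $f:[n]\to\N$, and that distinct functions $f$ produce distinct, hence linearly independent, monomials $a_{f(1)}\cdots a_{f(n)}$. Hence, by linearity of $\GammaNC$, the coefficient of the monomial attached to a fixed $f$ in $\GammaNC(\CIE_{G,C})=\sum_{D\subseteq C^+}(-1)^{|D|}\GammaNC(G\setminus D)$ is the signed sum $\sum_{D}(-1)^{|D|}$ taken over those $D\subseteq C^+$ for which $f$ is $(G\setminus D)$-nondecreasing. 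It therefore suffices to prove that this signed sum vanishes for every $f$.

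Next I would fix $f$ and introduce the set $B(f)$ of \emph{bad} edges, namely those $(i,j)\in E$ with $f(i)>f(j)$. Since $G\setminus D$ is obtained by erasing the edges of $D$, the function $f$ is $(G\setminus D)$-nondecreasing exactly when every bad edge has been erased, i.e. when $B(f)\subseteq D$. The coefficient above thus equals $\sum_{B(f)\subseteq D\subseteq C^+}(-1)^{|D|}$, an empty (hence zero) sum unless $B(f)\subseteq C^+$. When $B(f)\subseteq C^+$, writing $D=B(f)\sqcup D'$ with $D'\subseteq C^+\setminus B(f)$ factors the sum as $(-1)^{|B(f)|}\sum_{D'\subseteq C^+\setminus B(f)}(-1)^{|D'|}$, which vanishes unless $C^+\setminus B(f)=\emptyset$. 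Combining both conditions, the coefficient can be nonzero only when $B(f)=C^+$; so the proof reduces to showing that no $f$ satisfies $B(f)=C^+$.

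The core of the argument, and the step I expect to be the crux, is this final impossibility. Suppose $B(f)=C^+$, so that every edge of $C^+$ is bad and every edge of $C^-$ is good. Traversing the cycle in its own orientation $x_1\to x_2\to\cdots\to x_k\to x_1$, a step in $C^+$ is a genuine edge $(x_i,x_{i+1})$ that is bad, giving $f(x_i)>f(x_{i+1})$, whereas a step in $C^-$ corresponds to the genuine edge $(x_{i+1},x_i)$ being good, giving $f(x_{i+1})\le f(x_i)$; in either case $f(x_i)\ge f(x_{i+1})$. Chaining these around the cycle gives $f(x_1)\ge f(x_2)\ge\cdots\ge f(x_k)\ge f(x_1)$, forcing all inequalities to be equalities and contradicting the strict inequality attached to any edge of $C^+$.

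It remains to exclude the degenerate case $C^+=\emptyset$, in which there would be no strict inequality to contradict. Here I would use the acyclicity of $G$: if every step of $C$ belonged to $C^-$, the genuine edges $(x_{i+1},x_i)$ would form the directed cycle $x_1\to x_k\to\cdots\to x_2\to x_1$ of $G$, which is impossible. Hence $C^+\neq\emptyset$, the contradiction above always applies, every coefficient vanishes, and $\GammaNC(\CIE_{G,C})=0$.
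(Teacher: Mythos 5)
Your proof is correct and follows essentially the same route as the paper's: exchange the order of summation to isolate the coefficient of each monomial $a_{f(1)}\cdots a_{f(n)}$, identify it as an alternating sum over the interval between the set of bad edges and $C^+$, and rule out the case where the bad edges are exactly $C^+$ by chaining the inequalities around the cycle, with acyclicity guaranteeing $C^+\neq\emptyset$. Your set $B(f)$ of all bad edges of $G$ plays precisely the role of the paper's $D_f$ (they coincide whenever the sum is nonempty), so there is nothing to add.
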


\begin{proof}
Let $n$ be the size of $G$.
Using the definitions of the morphism $\GammaNC$ and of the element $\CIE_{G,C}$, one has:
\begin{multline*}
    \GammaNC(\CIE_{G,C}) = \sum_{ D \subseteq C^+} (-1)^{|D|} \left[ 
    \sum_{\gf{f : [n] \to \N}{f \ (G \setminus D) \text{ non-decreasing} }}
   a_{f(1)} \cdots a_{f(n)}  \right] \\
    = \sum_{f:[n] \to \N} \left( a_{f(1)} \cdots a_{f(n)} \right)
    \left( \sum_{ D \subseteq C^+} (-1)^{|D|} 
    \big[ f\ (G \setminus D) \text{ non-decreasing}\big] \right),
\end{multline*}
where $[\text{condition}]$ is $1$ if the condition is fulfilled and $0$ else.
The idea of the proof is to show that for any function $f:[n] \to \N$,
its contribution
\begin{equation}
    \sum_{ D \subseteq C^+} (-1)^{|D|}                        
        \big[ f\ (G \setminus D) \text{ non-decreasing}\big]
    \label{EqContributionF}
\end{equation}
is zero.\medskip

If $f$ is not a $G \backslash C^+$ non-decreasing function,
then each summand of \eqref{EqContributionF} is zero
and the conclusion holds trivially in this case.\medskip

Let $f:[n] \to \N$ be a $G \backslash C^+$ non-decreasing function,
define
\[D_f=\big\{(x,y) \in C^+ \text{ s.t. } f(x) > f(y) \big\} \ \subseteq C^+.\]
It is straightforward that $D_f$ fulfills the following property:
\begin{equation}
\forall D \subseteq C^+, \ \ \ \ f \text{ is $G \setminus D$ non-decreasing}
\Longleftrightarrow D_f \subseteq D.
    \label{EqDfIsCool}
\end{equation}
Hence \cref{EqContributionF} can be rewritten as:
    \[ \sum_{D_f \subseteq D \subseteq C^+} (-1)^{|D|}. \]
which is equal to zero if and only if $D_f \neq C^+$.
Therefore, to end the proof of the proposition, it is enough
to show that, for any $G \backslash C^+$ non-decreasing function,
$D_f$ is strictly included in $C^+$.\medskip

We proceed by contradiction.
Suppose that we can find a $G \backslash C^+$ non-decreasing function $f$ for which $D_f=C^+$.
This means that, for each $(x,y)$ in $C^+$, one has $f(x) > f(y)$.

Besides, since $f$ is a $G \backslash C^+$ non-decreasing function,
one has $f(x) \leq f(y)$ for any edge $(x,y)$ of $G$ which is not in $C^+$, 
so in particular for any couple $(y,x)$ in $C^-$.

Recall now that $C$ is a cycle in the undirected version of $G$.
Formally, $C$ is a list $(x_1,\dots,x_k)$
such that, for $1\leq i \leq \ell$, (by convention, $x_{k+1}=x_1$)
\begin{itemize}
    \item either $(x_i,x_{i+1})$ is an edge of $G$ and $(x_i,x_{i+1}) \in C^+$;
    \item or $(x_{i+1},x_i)$ is an edge of $G$ and $(x_i,x_{i+1}) \in C^-$.
\end{itemize}
Using the remarks above, we can conclude in both cases that 
$f(x_i) \geq f(x_{i+1})$. Bringing everything together,
\[f(x_1) \geq f(x_2) \geq \dots \geq f(x_{\ell}-1) \geq f(x_\ell) \geq f(x_1).\]
As $C^+$ can not be empty (otherwise, $(x_k,\dots,x_1)$ would be a directed cycle),
at least one of these inequalities should be strict.
We have reached a contradiction
and $D_f$ must be strictly included in $C^+$.
\end{proof}

\cref{prop:CIE_in_Kernel} gives some relations between the 
word quasi-symmetric functions $\GammaNC(G)$.
We call these relations {\em cyclic inclusion-exclusion relations} 
(CIE relations for short).
Formally, the elements $(\CIE_{G,C})$ span linearly a subspace,
that we shall denote $\ideal$, which is included in the kernel of $\GammaNC$.

We shall prove in the next Section that any relation among
the $\GammaNC(G)$ can be deduced from CIE relations.
In other terms, the space $\ideal$ is exactly the kernel of $\GammaNC$.
We will also prove that analog results hold for some quotients/restrictions of
$\GammaNC$.

\begin{special_case}
    We describe here the special case where $|C^+|=1$.
    If $e=(v_1,v_2)$ is the element of $C^+$, this means that the graph $G$ contains another path\footnote{
    A path form $x$ to $y$ is a list $(v_0,v_1,\ldots,v_k)$ with $v_0=x$ and $v_k=y$ such that
    for every $i$ in $[k]$, the pair $(v_{i-1},v_i)$ is an edge of $G$.
    \label{Footnote:path}}
    from $v_1$ to $v_2$.
    Informally, $e$ can be obtained from other edges of $G$ by {\em transitivity}.

    In this case, the inclusion-exclusion relation yields $\GammaNC(G)=\GammaNC(G \setminus \{e\})$.
    This is indeed true, as non-decreasing functions on both graphs are the same.
    \label{Special_Case:Transitivity}
\end{special_case}

\begin{remark}
A weaker form of \cref{prop:CIE_in_Kernel} (in the commutative setting)
has been established in \cite[Theorem 4.1]{BoussicaultF'eray2009}
and widely used to extend some rational identity due to Greene \cite{GreeneRationalIdentity}.
The structure of the proof is exactly the same.
\end{remark}
\label{sec:CIE_Rel}

\section{The kernel in the non-restricted case}
\label{sec:NonRestr}
\subsection{The graphs $G_\II$}
\label{sec:GI}

\begin{definition}
Let $\II=(I_1,\dots,I_r)$ be a set composition of $[n]$.
We consider the directed graph $G_\II$ with vertex set $[n]$
and edge set
\[ \bigsqcup_{j < k} I_j \times I_k. \]
In other terms, there is an edge between $x$ and $y$
if the index of the set of $\II$ containing $x$ is smaller than
the one of the set containing $y$.
\end{definition}

\begin{example}
    \label{ex:GI}
    Take $\II_\ex=15|346|2$. Then $G_{\II_\ex}$
     and the associated word quasi symmetric function are
     \begin{equation}
         G_{\II_\ex}=\begin{array}{c}
         \begin{tikzpicture}[font=\tiny,scale=.35]
        \tikzstyle{vertex}=[circle,draw=black,inner sep=.3pt];
        \node[vertex] (v1) at (0,0) {$1$};
        \node[vertex] (v5) at (2,0) {$5$};
        \node[vertex] (v3) at (-1,1.5) {$3$};
        \node[vertex] (v4) at (1,1.5) {$4$};
        \node[vertex] (v6) at (3,1.5) {$6$};
        \node[vertex] (v2) at (1,3) {$2$};
        \draw[->] (v1) -- (v3);
        \draw[->] (v1) -- (v4);
        \draw[->] (v1) -- (v6);
        \draw[->] (v5) -- (v3);
        \draw[->] (v5) -- (v4);
        \draw[->] (v5) -- (v6);
        \draw[->] (v3) -- (v2);
        \draw[->] (v4) -- (v2);
        \draw[->] (v6) -- (v2);
        \draw[->] (v1) to [bend left=20] (v2);
        \draw[->] (v5) to [bend right=20] (v2);
        \end{tikzpicture}
     \end{array}; \quad
     \GammaNC(G_{\II_\ex}) = \!\! \sum_{ \gf{k_1,\dots,k_6}{
    \gf {\max(k_1,k_5) \le \min(k_3,k_4,k_6) }{\max(k_3,k_4,k_6) \le k_2}}}
    \!\! a_{k_1}  \cdots  a_{k_6}.
    \label{eq:GamGIex}
\end{equation}
\end{example}

\subsection{A $\Z$-basis of $\WQSym$}
\label{sec:Zbasis_NonRes}
The purpose of this Section is to prove that $\GammaNC(G_\II)$
is a $\Z$-basis of $\WQSym$.
The proof requires to consider two additional bases of $\WQSym$
and prove that three change of basis matrices are unitriangular 
(with respect to different orders of the basis elements).
\medskip

As in \cite[Section 3.1]{WilsonExtensxionMacMahon},
it will be convenient to work with descent-starred permutations instead of set compositions.
\begin{definition}
    We call {\em descent-starred permutation} a couple $(\sigma,D)$ such
    that $D$ is a subset of the descent set
    $\{i, \sigma(i) > \sigma(i+1) \}$ of $\sigma$.

    The descents in $D$ are termed {\em starred}.
\end{definition}
In numerical example, we represent a descent-starred permutation $(\sigma,D)$
by the word notation of $\sigma$ in which the elements of index in $D$
are followed by a star.
For example the descent-starred permutation $(3142,\{3\})$ will be denoted
$31\u{4}2$.

\begin{lemma}
    Descent-starred permutations of $n$ are in bijection with set compositions of $[n]$.
\end{lemma}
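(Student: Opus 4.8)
The plan is to establish the bijection between descent-starred permutations of $n$ and set compositions of $[n]$ by giving an explicit construction in each direction and checking they are mutually inverse. The key observation is that a set composition records two independent pieces of data: a linear order on $[n]$ (obtained by reading the elements within each block, say in increasing order, and concatenating blocks left to right), together with the information of where the ``block breaks'' occur. A descent-starred permutation $(\sigma, D)$ encodes the same two pieces: the permutation $\sigma$ supplies an ordering, and the starred descent set $D$ marks where to cut.

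First I would define the forward map, from set compositions to descent-starred permutations. Given $\II = (I_1, \dots, I_r)$, I would sort each block $I_j$ in increasing order and concatenate to form a word; this word is a permutation $\sigma$ of $[n]$. I would then let $D$ consist of those positions $i$ that fall at the boundary between two consecutive blocks, i.e.\ where reading $\sigma(i)$ and $\sigma(i+1)$ crosses from $I_j$ to $I_{j+1}$. The crucial point to verify is that every such boundary position is genuinely a descent of $\sigma$: since each block is written in increasing order, a boundary can only occur where the value drops, because within a block the values increase, and at the last element of a block $I_j$ we have the largest element of $I_j$ followed by the smallest element of $I_{j+1}$, which is a descent precisely because there is no forced order between distinct blocks. (One must check that this last element of $I_j$ exceeds the first element of $I_{j+1}$; in fact this need not hold in general, so the correct encoding is subtler — see below.)

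The main obstacle, and the subtle point, is exactly this: a block boundary in the naive increasing-order concatenation is not automatically a descent. For instance $\{1\}\,|\,\{2\}$ concatenates to $12$, which has no descent at position $1$. The standard fix, which I would adopt, is to use the convention where blocks are read in \emph{decreasing} order, or equivalently to choose the encoding so that block boundaries coincide with descents by construction; one records $\sigma$ by listing each block in decreasing order and concatenating, so that the interior of each block contributes descents while the chosen starred set $D$ singles out exactly the genuine block boundaries among all descents. I would therefore define $\sigma$ by writing each $I_j$ in \emph{decreasing} order, and set $D$ to be the positions at block boundaries; then each boundary is a descent (last, smallest element of $I_j$ against first, largest element of $I_{j+1}$ — one checks this is indeed a descent), and $D$ is a subset of the descent set of $\sigma$, so $(\sigma, D)$ is a valid descent-starred permutation.

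For the inverse map, given $(\sigma, D)$ I would cut the word $\sigma(1)\sigma(2)\cdots\sigma(n)$ immediately after each position in $D$, producing consecutive blocks, and take the underlying unordered sets as $\II$. To confirm this is well-defined and inverse to the forward map, I would check that the blocks so produced are exactly the maximal increasing (or decreasing, per the convention above) runs permitted by $D$, and that re-sorting recovers the original blocks. The two maps are then manifestly inverse: cutting at $D$ and re-sorting recovers $\II$, while sorting and re-marking recovers $(\sigma, D)$, since the starred positions are determined by the block structure and the permutation is determined by the sorted blocks together with $D$. This establishes the bijection. As a sanity check, both sets are counted by the ordered Bell numbers — set compositions by definition, and descent-starred permutations because summing $\sum_{\sigma} 2^{\operatorname{des}(\sigma)}$ over $\sigma \in S_n$ yields the ordered Bell number — which matches the dimension count recorded earlier for $\WQSym_n$.
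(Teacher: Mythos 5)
Your construction is not the paper's, and it does not work. The paper's bijection sorts each block in \emph{decreasing} order, concatenates, and stars the descents \emph{inside} each block (these are automatically descents precisely because each block is written decreasingly); the block boundaries are left unstarred, whether or not they happen to be descents. You instead propose to star the block boundaries, and you justify this with the claim that after writing blocks in decreasing order ``each boundary is a descent (last, smallest element of $I_j$ against first, largest element of $I_{j+1}$).'' That claim is false: the smallest element of $I_j$ need not exceed the largest element of $I_{j+1}$. Already for $n=2$ and $\II = 1|2$ the word is $12$ and the boundary position $1$ is an ascent, so $(12,\{1\})$ is not a descent-starred permutation and your forward map is not even well-defined. (You in fact noticed exactly this obstruction in your second paragraph for the increasing-order convention, but reading blocks decreasingly only fixes the \emph{interior} adjacencies, not the boundaries, and your ``fix'' stars the wrong set.)

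The repair is to swap which positions get starred: with blocks written decreasingly, take $D$ to be the set of positions $i$ such that $\sigma(i)$ and $\sigma(i+1)$ lie in the \emph{same} block. Every such position is a descent by construction, so $(\sigma,D)$ is a valid descent-starred permutation, and a block of size $c$ contributes $c-1$ stars (consistent with the paper's example $15|346|2 \mapsto \u{5}1\u{6}\u{4}32$, where the boundary descent at position $5$ is a descent but is \emph{not} starred). The inverse then cuts the word at every \emph{unstarred} position (not after each starred one, as you propose), so that the blocks are the maximal starred runs, each of which is a decreasing sequence whose underlying set recovers $I_j$. Your counting sanity check via $\sum_{\sigma\in S_n} 2^{\operatorname{des}(\sigma)}$ equalling the ordered Bell number is correct and is a reasonable cross-check, but it does not substitute for a well-defined map.
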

\begin{proof}
    From the numerical notation of a set composition $\II$,
    we sort each part in decreasing order and remove vertical bars to get the
    word notation of $\sigma$.
    Then mark with a star the descents inside the same part of $\II$.
    This is clearly a bijection.
\end{proof}
For example, the descent-starred permutation associated to $15|346|2$ is $\u{5}1\u{6}\u{4}32$.

Let us define three families of word quasi-symmetric functions indexed by descent-starred permutations
$\M_{(\sigma,D)}$, $\L_{(\sigma,D)}$ and $\F_{(\sigma,D)}$.
All of them are defined as a sum
\[\sum a_{k_1} \cdots a_{k_n}\]
over lists $\kk=(k_1,\dots,k_n)$ of positive integers with conditions 
given in the following table (for integers $x$ in $[n-1]$):
\[\begin{array}{|c|c|c|c|}
    \hline  & \M_{(\sigma,D)} & \L_{(\sigma,D)} & \F_{(\sigma,D)} \\ \hline
    x \in D & k_{\sigma(x)} = k_{\sigma(x+1)} & k_{\sigma(x)} = k_{\sigma(x+1)} & k_{\sigma(x)} < k_{\sigma(x+1)} \\ \hline
    x \notin D & k_{\sigma(x)} < k_{\sigma(x+1)} & k_{\sigma(x)} \le k_{\sigma(x+1)} & k_{\sigma(x)} \le k_{\sigma(x+1)} \\ \hline
\end{array} \]\medskip

In the definitions of $\M_{(\sigma,D)}$ and $\L_{(\sigma,D)}$,
we require that $k_{\sigma(x)} = k_{\sigma(x+1)}$ for $x \in D$,
which implies that the function $x \mapsto k_x$ should be constant
on the parts of the associated set composition $\II$.
Moreover, in $\M_{(\sigma,D)}$, together the strict inequalities for $x \notin D$,
this is equivalent to $\Delta(\kk)=\II$, so that we have
$\M_{(\sigma,D)}=M_\II$.

\begin{remark}
    The commutative projection of $\F_{(\sigma,D)}$ is $F_{J}$,
    where $F$ is the so-called {\em fundamental basis} of $\QSym$
    and $J$ the (integer) composition associated with the set $D$
    (we use here the terminology of \cite[Section 7.19]{Stanley:EC2}).
\end{remark}

\begin{lemma}
    \label{lem:Bij_MP}
    The families $(\L_{(\sigma,D)})$ and $(\F_{(\sigma,D)})$, indexed by descent-starred permutations,
    are $\Z$-basis of $\WQSym$.
\end{lemma}
\begin{proof}
    We start by recalling some classical terminology:
    we say that a set-partition $\II$ is finer than $\JJ$ and denote $\II \triangleleft \JJ$
    if $\JJ$ can be obtained from $\II$ by removing vertical lines and reordering the blocks: for example,
    $15|346|2$ is finer than $13456|2$ and than $15|2346$.
    
    Let $(\sigma,D)$ be a descent-starred permutation and $\II$ the associated set composition.
    Using the remark above, the definition of $\L_{(\sigma,D)}$ 
    (that we will also denote $\L_\II$) can be rewritten as
    \[\L_\II = \sum a_{k_1} \cdots a_{k_n},\]
    where the sum runs over lists $(k_1,\dots,k_n)$ that are constant
    on the parts of $I$ and such that the value of $k$ on $I_m$ is 
    at most the one on $I_{m+1}$ (for each $m$ in $[\ell(I)-1]$).
If we cut the sum depending on which indices $i_\ell$ are equal, we obtain\footnote{
See \cite[Eq. (2)]{GesselQSym} for the commutative analog of this statement.}
\[\L_\II = \sum_{J \triangleright I} \M_\JJ.\]
 This implies that $(\L_\II)$ is a $\Z$-basis of $\WQSym$
 as its matrix in the basis $(\M_\II)$ is unitriangular. \medskip

 Consider now the family $\F_{(\sigma,D)}$.
 We first rewrite the definitions of $\F_{(\sigma,D)}$ and $\L_{(\sigma,D)}$ as follows:
 \begin{align}
     \F_{(\sigma,D)} &= \sum a_{k_1} \cdots a_{k_n} 
     \prod_{x \in D} \big(1-\delta_{k_{\sigma(x)},k_{\sigma(x+1)}}\big),
     \label{eq:F_rewritten}\\
     \L_{(\sigma,D)} &= \sum a_{k_1} \cdots a_{k_n} 
     \prod_{x \in D} \big(\delta_{k_{\sigma(x)},k_{\sigma(x+1)}}\big),
 \end{align}
 where both sums run over lists $(k_1,\cdots,k_n)$ that satisfy 
 $k_{\sigma(1)} \le \cdots \le k_{\sigma(n)}$ and $\delta_{i,j}$ is the usual Kronecker symbol.
 Expanding the product in \eqref{eq:F_rewritten}, we get
\[ \F_{(\sigma,D)} = \sum_{D' \subseteq D} (-1)^{|D'|} \L_{(\sigma,D')}. \]
Hence the matrix of the family $\F_{(\sigma,D)}$ in the basis $\L_{(\sigma,D)}$
is unitriangular with respect to the following order\footnote{
This order is isomorphic to the order on set compositions
denoted $\leq_\star$ in \cite[Section 6]{BergeronZabrockiSymQSymFreeCofree}.}:
\[(\sigma',D') \leq_1 (\sigma,D) \Rightarrow \begin{cases}
    \sigma = \sigma' \\
    D' \subseteq D
\end{cases} \]
This proves that $(\F_{(\sigma,D)})$ is a $\Z$-basis of $\WQSym$.
\end{proof}\medskip

We now explain how $\GammaNC(G_\II)$ writes on the $\F$ basis.
If $\II=(I_1,\dots,I_r)$ is a set composition, we consider the following set $\MP(\II)$ of descent-starred permutations:
\begin{itemize}
    \item As a word $\sigma=w_1 \cdots w_r$, where $w_m$ contains exactly once each element of $I_m$ ;
    \item The descent in position $x$ is starred if $\sigma_x$ and $\sigma_{x+1}$ are in the same
        part of $\II$. In other words, for each $m$, we mark the descents in $w_m$,
        but not the potential descent created by concatenating $w_m$ and $w_{m+1}$.
\end{itemize}
For example, take $\II_\ex=15|346|2$, then $\MP(\II_\ex)$ contains the following 12 descent-starred permutations:
\begin{multline*}
    153462,\ \u{5}13462,\ 15\u{4}362,\ \u{5}1\u{4}362,\ 15\u{6}\u{4}32,\ \u{5}1\u{6}\u{4}32,\ \\
153\u{6}42,\ \u{5}13\u{6}42,\ 154\u{6}32,\ \u{5}14\u{6}32,\ 15\u{6}342,\ \u{5}1\u{6}342 
\end{multline*}
\begin{proposition}
    \label{prop:GamG_on_Fbasis}
    For any set composition $\II$, one ha:
    \[\GammaNC(G_\II) = \sum_{(\sigma,D) \in MP(\II)} \F_{(\sigma,D)}.\]
\end{proposition}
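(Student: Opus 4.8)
The plan is to unfold the definition of $\GammaNC(G_\II)$ and match it term-by-term against the sum of fundamental word-quasisymmetric functions on the right-hand side. Recall that $\GammaNC(G_\II)$ is the sum of $a_{k_1}\cdots a_{k_n}$ over all $G_\II$ non-decreasing functions $\kk$, where by the definition of $G_\II$ the non-decreasing condition says exactly that $\max_{x \in I_j} k_x \le \min_{y \in I_k} k_y$ whenever $j < k$; equivalently, the value of $k$ on every element of $I_j$ is at most the value on every element of $I_k$, part by part. So the index set of the sum defining $\GammaNC(G_\II)$ consists precisely of those lists $\kk$ that are \emph{weakly} increasing across the parts in the order $I_1, I_2, \dots, I_r$, with \emph{no} constraint imposed between two vertices lying in the same part.

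The key step is to partition this index set according to the exact pattern of equalities and strict inequalities that a given list $\kk$ realizes. Each $(\sigma, D) \in \MP(\II)$ records a total ordering refinement of the parts: reading $\sigma = w_1 \cdots w_r$, each block $w_m$ is a linear arrangement of the elements of $I_m$, and the starred descents mark where consecutive entries of $w_m$ must be strictly decreasing in index but are forced to carry equal $k$-values, while unstarred positions (including the junctions between $w_m$ and $w_{m+1}$) carry weak inequalities. By the defining table, $\F_{(\sigma,D)}$ sums $a_{k_1}\cdots a_{k_n}$ over lists with $k_{\sigma(x)} < k_{\sigma(x+1)}$ for $x \in D$ and $k_{\sigma(x)} \le k_{\sigma(x+1)}$ for $x \notin D$. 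First I would check that the union over $(\sigma,D) \in \MP(\II)$ of the index sets of the $\F_{(\sigma,D)}$ is exactly the set of $G_\II$ non-decreasing lists, and that this union is \emph{disjoint}: given any such $\kk$, there is a \emph{unique} way to totally order, within each part $I_m$, the vertices by their $k$-value (breaking ties by listing equal-valued vertices in decreasing index order, which is precisely the convention forcing starred positions), and to record which within-part adjacencies are equalities (unstarred in $\F$, hence in the complement of $D$) versus strict (no such occur within a part by construction) — so that $\kk$ belongs to exactly one $\F_{(\sigma,D)}$.

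The main obstacle, and the point requiring care, is the bookkeeping of the star/no-star convention for within-part positions versus the between-part junctions, and verifying that the $\F$-index sets genuinely tile the full $G_\II$ non-decreasing index set without overlap. Concretely, I would argue as follows: fix a $G_\II$ non-decreasing $\kk$. Within each part $I_m$, list its elements so that their $k$-values are weakly decreasing, and in case of equal $k$-values list them in decreasing order of their label; this produces the word $w_m$, and the descents internal to $w_m$ are exactly the adjacent pairs, which split into those with equal $k$-value (to be starred) and those with strictly larger-then-smaller $k$-value. But a non-decreasing $\kk$ imposes no order within a part, so among the elements of $I_m$ we may see arbitrary strict inequalities; the subtlety is that $\F_{(\sigma,D)}$ reads each $\sigma$-adjacency as $\le$ or $<$ along the \emph{word order}, so I must confirm that every within-part comparison pattern is captured by exactly one choice of $w_m$ together with its starred descent set, which is the content of the standard correspondence between weakly-decreasing-with-ties arrangements and descent-starred words already used implicitly in \cref{lem:Bij_MP}. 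Once disjointness and exhaustiveness are established, summing the monomials gives the claimed identity directly, since each monomial $a_{k_1}\cdots a_{k_n}$ of $\GammaNC(G_\II)$ appears in exactly one $\F_{(\sigma,D)}$ with $(\sigma,D) \in \MP(\II)$ and conversely.
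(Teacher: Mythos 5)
Your overall strategy is the same as the paper's: unfold $\GammaNC(G_\II)$ as a sum over $G_\II$ non-decreasing lists $\kk$, and show that for each such $\kk$ there is exactly one $(\sigma,D)\in\MP(\II)$ whose $\F_{(\sigma,D)}$ contains the monomial $a_{k_1}\cdots a_{k_n}$, so that the $\F$-index sets tile the index set of $\GammaNC(G_\II)$. However, the execution of the key step is wrong, and internally inconsistent with the definition of $\F$ that you yourself quote. You assert that ``starred descents mark where consecutive entries of $w_m$ \ldots are forced to carry equal $k$-values'' and later that equal-valued adjacencies are ``to be starred.'' That is the convention for $\M_{(\sigma,D)}$ and $\L_{(\sigma,D)}$; for $\F_{(\sigma,D)}$ the table requires the \emph{strict} inequality $k_{\sigma(x)}<k_{\sigma(x+1)}$ at starred positions and $k_{\sigma(x)}\le k_{\sigma(x+1)}$ elsewhere, so the $k$-values must weakly \emph{increase} along the word, strictly at every starred descent.

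Consequently your recipe for locating the unique $(\sigma,D)$ --- list each part with $k$-values weakly decreasing, ties broken by decreasing label --- produces a pair whose $\F_{(\sigma,D)}$ does \emph{not} contain the monomial except in degenerate cases. Concretely, for the part $\{3,4,6\}$ of $\II_\ex=15|346|2$ with $k_3=1$, $k_4=2$, $k_6=3$, your rule gives the block $643$ with both descents starred, and the corresponding $\F$ demands $k_6<k_4<k_3$, i.e.\ $3<2<1$, which fails; the correct block is $346$ with no starred descent. The correct construction (the one in the paper) is to sort each part $I_m$ lexicographically by the pairs $(k_y,y)$, i.e.\ weakly increasing $k$-values with ties broken by \emph{increasing} label: then every internal descent carries a strict increase of $k$ (so the starred conditions hold), every tie is a non-descent (so only the weak condition is needed), and uniqueness follows because any admissible block must be weakly increasing in $k$ with equal-valued elements in increasing label order. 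Your disjointness-and-exhaustiveness framework is fine, but it only goes through after this orientation is fixed.
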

\begin{proof}
    Let $f$ be a $G_\II$ non-decreasing function from $[n]$ to $\N$.
    For each part $I_m$ in the set composition $\II$, let us consider the restriction $f_m$ of $f$ to $I_m$.
    Then there exists a unique word $w_m$ containing exactly once each number in $I_m$ such that
    \[ y\text{ appear before }z\text{ in }w_m \Leftrightarrow \begin{cases}
        f_m(y) \le f_m(z) &\text{ if }y < z;\\
        f_m(y) < f_m(z) &\text{ if }y > z;
    \end{cases}\]
    Indeed this word is obtained by ordering lexicographically the pair $((f_m(y),y))_{y \in I_m}$
    and keeping only the second element of each pair\footnote{
    Existence and uniqueness of the word $w_m$ can also be seen as a special case of Stanley fundamental
    theorem on $P$-partitions \cite[Theorem 6.2]{StanleyOrderedStructures}
    (see also \cite{KnuthPPartitions}), where the poset $P$
   has element set $I_m$ and no relations.}.

    We mark the descent in $w_m$ and by concatenating all the words $w_m$ (for $1 \le m \le r$),
    we get a descent-starred permutation $(\sigma,D)$ in $\MP(\II)$.
    This descent-starred permutation is the only one in $\MP(\II)$ such that $a_{f(1)} \cdots a_{f(n)}$
    appears in $\F_{(\sigma,D)}$, which explains the formula of the proposition.
\end{proof}

\begin{example}
    Take $\II_\ex$ as above, $\GammaNC(G_{\II_\ex})$ is given by \cref{eq:GamGIex}.
    The summation set can be split as follows:
    \begin{itemize}
        \item either $k_1 \le k_5$ or $k_5 < k_1$;
        \item besides, the integers $k_3$, $k_4$ and $k_6$ fulfill exactly one of the 6 following inequalities:
            \[ k_3 \le k_4 \le k_6, \ \ \ k_4 < k_3 \le k_6, \ \ \ k_3 \le k_6 < k_4, \]
            \[k_4 \le k_6 < k_3, \ \ \ k_6 < k_3 \le k_4, \ \ \ k_6 < k_4 < k_3.\]
    \end{itemize}
    Combining both case distinctions yield 12 different cases,
    and $\GammaNC(G_{\II_\ex})$ is a sum of 12 different terms which are the $\F$ functions
    indexed by the 12 descent-starred permutations in $\MP(\II)$ (which are listed above).
\end{example}

    \begin{corollary}
    The family $\big(\GammaNC(G_\II)\big)$ is a $\Z$-basis of $\WQSym$.
    \label{corol:GamG_Zbasis}   
    \end{corollary}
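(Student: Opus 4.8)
The goal is to show that $\big(\GammaNC(G_\II)\big)_\II$, indexed by set compositions of $[n]$, is a $\Z$-basis of $\WQSym_n$. The plan is to combine \cref{prop:GamG_on_Fbasis}, which expresses each $\GammaNC(G_\II)$ in the $\F$-basis, with \cref{lem:Bij_MP}, which tells us that $(\F_{(\sigma,D)})$ is itself a $\Z$-basis. Since both families are indexed by sets of the same (finite) cardinality in each degree $n$ — namely the number of set compositions of $[n]$, equivalently the number of descent-starred permutations — it suffices to prove that the change-of-basis matrix expressing the $\GammaNC(G_\II)$ in the $\F_{(\sigma,D)}$ basis is invertible over $\Z$.

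From \cref{prop:GamG_on_Fbasis}, this matrix has entries in $\{0,1\}$: the coefficient of $\F_{(\sigma,D)}$ in $\GammaNC(G_\II)$ is $1$ precisely when $(\sigma,D) \in \MP(\II)$, and $0$ otherwise. So the strategy is to exhibit a linear order on the index set making this $\{0,1\}$-matrix unitriangular, which immediately gives invertibility over $\Z$ (determinant $\pm 1$). The natural distinguished element of $\MP(\II)$ to use as the ``diagonal'' is the one where every $w_m$ is the increasing word on $I_m$ so that no descent internal to a part occurs; concretely I would try the descent-starred permutation obtained by writing each block $I_m$ in increasing order and starring nothing internally, or its mirror image, whichever makes the combinatorics of $\MP$ cleanest. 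First I would identify, for each $\II$, a canonical $(\sigma_\II, D_\II) \in \MP(\II)$ that recovers $\II$ uniquely, and then check that $(\sigma_\II, D_\II) = (\sigma_{\JJ}, D_{\JJ})$ forces $\II = \JJ$ — this is essentially the content that $\MP$ and the map back to set compositions are mutually inverse on the canonical representative.

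The key step, and the main obstacle, is the triangularity: I need to verify that whenever $(\sigma,D) \in \MP(\II)$ is the canonical representative of \emph{some other} set composition $\JJ$ (i.e.\ $(\sigma,D) = (\sigma_\JJ, D_\JJ)$), the two are comparable in the chosen order with $\JJ$ strictly below $\II$ unless $\JJ = \II$. In other words, among all the $(\sigma,D)\in\MP(\II)$, exactly one is a canonical representative, namely $(\sigma_\II,D_\II)$, and the off-diagonal $\F$-indices appearing in $\GammaNC(G_\II)$ correspond either to non-canonical starred permutations (which do not appear on the diagonal of the matrix at all, once we read the matrix with rows and columns both indexed by canonical representatives) or to strictly smaller $\JJ$. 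The cleanest route is probably to order set compositions by refinement $\triangleleft$ together with the reordering of blocks, mirroring the order $\leq_1$ and the refinement order already used in the proof of \cref{lem:Bij_MP}, and to show that $\MP(\II)$ contains the canonical representative of $\II$ and, as non-canonical contributions, only representatives that sit strictly lower.

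Rather than fight this triangularity directly, a more robust and shorter route is available: \cref{lem:Bij_MP} gives that $(\F_{(\sigma,D)})$ has cardinality equal to the number of set compositions of $[n]$, and \cref{prop:GamG_on_Fbasis} shows that each $\GammaNC(G_\II)$ is a sum of distinct $\F$'s. Thus the family $\big(\GammaNC(G_\II)\big)$ is a family of $|\{\text{set compositions of }[n]\}|$ elements in a free $\Z$-module of the same rank, and it suffices to prove $\Z$-linear independence. I would prove independence by the unitriangularity argument above, picking from each $\MP(\II)$ a ``leading term'' $\F_{(\sigma_\II,D_\II)}$ with the property that $(\sigma_\II,D_\II)$ belongs to $\MP(\II)$ and to no $\MP(\JJ)$ with $\JJ$ ranked strictly higher, so that the leading terms are distinct across distinct $\II$; then the matrix is unitriangular and the family is a $\Z$-basis. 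The delicate point remains the explicit description of these leading terms and the verification that they are pairwise distinct and correctly placed, which is exactly where the combinatorics of $\MP$ must be pinned down carefully.
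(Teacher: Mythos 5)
Your overall strategy is the paper's: expand $\GammaNC(G_\II)$ in the $\F$-basis via \cref{prop:GamG_on_Fbasis} and exhibit a unitriangular change-of-basis matrix. But the proof is not actually carried out, and the one concrete choice you lean towards first would fail. The candidate ``write each block $I_m$ in increasing order and star nothing'' is not injective: for instance $\II=12$ and $\II=1|2$ both produce $(12,\emptyset)$, so two rows of your matrix would share the same would-be leading column and triangularity breaks. Your fallback (``or its mirror image'') is the right one, but you never commit to it, and your subsequent discussion of ``non-canonical starred permutations'' that ``do not appear on the diagonal'' is confused: by \cref{lem:Bij_MP} \emph{every} descent-starred permutation is the image of exactly one set composition, so there are no non-canonical indices to discard. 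Finally, the refinement-type order you propose is both unnecessary and left unverified -- you explicitly defer ``the delicate point,'' which is precisely the content of the corollary.

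The missing observation, which is the whole proof, is this: the bijection of \cref{lem:Bij_MP} sorts each block of $\II$ in \emph{decreasing} order and stars all descents internal to a block, so the associated $(\sigma,D)$ lies in $\MP(\II)$ and has $|D|=n-r$ where $r=\ell(\II)$ -- the maximum possible, since a part of size $s$ contributes at most $s-1$ starred descents and achieves $s-1$ only when arranged decreasingly. Any other $(\sigma',D')\in\MP(\II)$ has some block not arranged decreasingly, hence $|D'|<n-r$ strictly. Ordering descent-starred permutations simply by $|D|$ therefore makes the matrix of $\big(\GammaNC(G_\II)\big)$ in the basis $\big(\F_{(\sigma,D)}\big)$ unitriangular, with diagonal entries $1$, and the conclusion follows. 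No refinement order and no separate injectivity argument are needed beyond \cref{lem:Bij_MP}.
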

    \begin{proof}
        If $(\sigma,D)$ is the descent-starred permutation associated by \cref{lem:Bij_MP} 
        to a set composition $\II$ of $n$ of length $r$,
then the size of $D$ is $n-r$.
Besides, for each element $(\sigma',D') \in MP(\II)$, the size of $D'$ is smaller than $n-r$, unless
$(\sigma',D')=(\sigma,D)$.
Hence the proposition implies that the matrix of $\GammaNC(G_\II)$ in the basis $\F_{(\sigma,D)}$ is 
unitriangular with respect to the order
\[(\sigma',D') <_2 (\sigma,D) \Leftrightarrow |D'| < |D| \]
and $\GammaNC(G_\II)$ is a $\Z$-basis of $\WQSym$.
\end{proof}

\begin{remark}
    Stanley fundamental
    theorem on $P$-partitions \cite[Theorem 6.2]{StanleyOrderedStructures} 
    (see also Knuth's paper \cite{KnuthPPartitions}) implies
        that, if $G$ is a naturally labeled graph ({\em i.e.} such that $(i,j) \in E$ implies
        $i \le j$ as positive integer as positive integerss),
        then $\GammaNC(G)$ has a non-negative expansion on the $\F_{(\sigma,D)}$ basis.
        \cref{prop:GamG_on_Fbasis} gives examples of non-necessarily naturally labeled graphs $G$,
        such that the $\F_{(\sigma,D)}$ expansion of $\GammaNC(G)$ has non-negative coefficients.
        But, this is not the case for any graph $G$, as shown by the following example 
        (we skip details in the computation):
        \begin{multline*}
            \GammaNC\left(
\begin{array}{c}\begin{tikzpicture}[font=\scriptsize,scale=.3]
        \tikzstyle{vertex}=[circle,draw=black,inner sep=.3pt];
        \node[vertex] (v3) at (0,0) {$3$};
        \node[vertex] (v1) at (0,2) {$1$};
        \node[vertex] (v4) at (2,1) {$2$};
        \draw[->] (v3) -- (v1);
    \end{tikzpicture}\end{array}
    \right)=\F_{231}+\F_{\u{3}\u{2}1}+\F_{312}-\L_{\u{3}\u{2}1} \\
    =\F_{231}+\F_{312}+\F_{\u{3}21}+\F_{3\u{2}1}-\F_{321}.
\end{multline*}
    Such negative signs do not occur in the commutative setting:
    indeed, any function $\Gamma(\ov{G})$ is a non-negative linear
    combination of fundamental quasi-symmetric functions, 
    see \cite[Corollary 7.19.5]{Stanley:EC2}.
\end{remark}

\subsection{A generating family for the quotient}
\label{subsec:generating_quotient}

We will now show that $(G_\II)$, where $\II$ runs over all set compositions,
 is a generating family in the quotient $\G / \ideal$.
 As explained in \cref{sec:First_Main_Result}, 
 together with the results of \cref{sec:Zbasis_NonRes,sec:CIE_Rel},
this implies that $\GammaNC:\G / \ideal \to \WQSym$ is an isomorphism.

Here is the key combinatorial lemma in this section.
\begin{lemma}
    Let $G$ be a unlabeled poset. 
    Then either $G$ is equal to some $G_\II$ or, in the quotient $\G / \ideal$,
    one can write $G$ as a linear combination
    of graphs with the same set of vertices and more edges.
    \label{lem:IncreasingSize}
\end{lemma}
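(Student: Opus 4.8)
The plan is to organise everything around the observation that the graphs $G_\II$ are exactly the transitively closed \emph{weak orders}, i.e.\ the posets whose incomparability relation is transitive. Indeed, for such a poset the incomparability classes are antichains $I_1,\dots,I_r$ that are totally ordered among themselves, and the transitively closed graph having these classes as layers is precisely $G_\II$ for $\II=(I_1,\dots,I_r)$. Consequently, if $G$ is not equal to any $G_\II$, then either (i) $G$ is not transitively closed, or (ii) $G$ is transitively closed but, as a poset, is not a weak order. I would dispatch these two cases with cyclic inclusion--exclusion relations whose effect is to rewrite $G$ in terms of graphs carrying strictly more edges; this is exactly the assertion of the lemma, and it is what will later let one induct down to the $G_\II$'s.

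In case (i) there is a pair $(x,y)$ joined by a directed path of $G$ with $(x,y)\notin E(G)$. Setting $H=G\cup\{(x,y)\}$ and letting $C$ be the cycle built from the new edge $(x,y)$ together with the existing path from $x$ to $y$, one has $|C^+|=1$, so by \cref{Special_Case:Transitivity} the element $\CIE_{H,C}=H-G$ lies in $\ideal$. Thus $G\equiv H\pmod{\ideal}$, and $H$ has the same vertices as $G$ but one more edge; here $H$ is acyclic since adding a transitively implied edge does not change reachability.

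The substantial case is (ii). As incomparability fails to be transitive, I can pick vertices $a,b,c$ with $a<_G c$ and $b$ incomparable to both $a$ and $c$; since $G$ is transitively closed, $(a,c)\in E(G)$. I would form $H=G\cup\{(a,b),(b,c)\}$ and take the triangle $C=(a,b,c)$, which runs forward along the two new edges and backward along $(a,c)$. Checking orientations, $C^+=\{(a,b),(b,c)\}$ while the edge $(a,c)$ falls into $C^-$, and crucially $H\setminus C^+=G$. Expanding the relation $\CIE_{H,C}\equiv 0\pmod{\ideal}$ over the four subsets $D\subseteq C^+$ then gives $G\equiv -H+\bigl(H\setminus\{(a,b)\}\bigr)+\bigl(H\setminus\{(b,c)\}\bigr)$, a linear combination of graphs on the same vertex set, each with strictly more edges than $G$.

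The step I expect to demand the most care is verifying that $H$ is acyclic, which is needed both for $H\in\G$ and so that every term $H\setminus D$ is a legitimate acyclic graph. A directed cycle in $H$ would have to use one of the two new edges, and tracing it would force one of $b\le_G a$, $c\le_G b$, or $c\le_G a$; the first two are impossible because $b$ is incomparable to $a$ and to $c$, and the last contradicts $a<_G c$ together with the acyclicity of $G$. The other point worth emphasising is the bookkeeping that $C^+$ consists of \emph{exactly} the two added edges (the old edge $(a,c)$ landing in $C^-$): this is what guarantees $H\setminus C^+=G$ and that every proper subset $D\subsetneq C^+$ removes strictly fewer edges, so that each surviving term has more edges than $G$ --- the monotonicity that drives the reduction toward the graphs $G_\II$.
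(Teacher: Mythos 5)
Your proof is correct and follows essentially the same three-way case analysis as the paper: a $|C^+|=1$ relation to restore transitivity, a $|C^+|=2$ relation when incomparability fails to be transitive, and the identification of the remaining graphs with the $G_\II$. The only differences are cosmetic --- because you dispose of transitivity first you may use the triangle $(a,b,c)$ where the paper uses a longer cycle through a path from $x$ to $z$, and you invoke the standard structure of weak orders where the paper spells out in detail (its Case 3) why a transitively closed graph with transitive incomparability equals some $G_\II$.
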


\begin{proof}
    Let $G$ be an acyclic directed graph with vertex set $[n]$ and edge set $E_G$.
    
    Throughout the proof, we denote $\sim$ the following symmetric relation:
    $x \sim y$ if, in $G$, there is no directed path (see \cref{Footnote:path} for the definition)
    from $x$ to $y$,
     nor from $y$ to $x$.
     When $x \sim y$, the graphs $G_{(x,y)}$ and $G_{(y,x)}$ obtained
          from $G$ by adding respectively an edge from $x$ to $y$ or
               from $y$ to $x$ are still acyclic.\bigskip

    We distinguish three cases.\medskip

    \noindent {\em Case 1: $G$ is not the graph of a transitive relation.}

    In other terms, there exist $x$, $y$ and $z$ such that
    \begin{itemize}
     \item there is an edge from $x$ to $y$ and from $y$ to $z$ in $G$;
     \item there is no edge from $x$ to $z$.
    \end{itemize}
    We consider $G_0=G_{(x,z)}$ the graph obtained from $G$ by adding
    an edge between $x$ and $z$.
    As a directed graph, $G_0$ is acyclic: otherwise, there would be a path from $z$ to $x$ in $G$
    and, together with $(x,y)$ and $(y,z)$, this path would be a directed cycle in $G$.
    But the non-oriented version of $G_0$ contains a cycle 
    $C = ( x,z,y)$.
    Using the notation of \cref{sub:DefCIE} (see also \cref{Special_Case:Transitivity}),
    one has $C^+ = \{ (x,z) \}$
    and the corresponding cyclic inclusion-exclusion element is
    \[ \CIE_{G_0,C} = G_0 - G.\] 
    Hence, in $\G/ \ideal$, one has $G=G_0$ and the statement is true in this case.\medskip
    
    This case is illustrated in \cref{fig:case1} with examples of graphs $G$ and $G_0$.
    Dashed edges are edges of $G$ and $G_0$ that do not play a role in the proof.\medskip

    \begin{figure}[t]
        \[
        G=\begin{array}{c} \begin{tikzpicture}[scale=.5,font=\scriptsize]
              \tikzstyle{vertex}=[circle,draw=black,inner sep=.3pt,minimum size=2mm];
             \node[vertex] (x) at (0,0) {$x$};
             \node[vertex] (y) at (-1,1.5) {$y$};
             \node[vertex] (z) at (0,3) {$z$};
             \node[vertex] (inutile1) at (2,0) { };
             \node[vertex] (inutile2) at (2,3) { };
             \draw[->] (x) -- (y);
             \draw[->] (y) -- (z);
%             \draw[->] (x) -- (z);
             \draw[->,dashed] (x) -- (inutile2);
             \draw[->,dashed] (inutile1) -- (inutile2);
             \draw[->,dashed] (inutile1) -- (z);
         \end{tikzpicture}\end{array}
         \qquad \qquad
        G_0=\begin{array}{c} \begin{tikzpicture}[scale=.5,font=\scriptsize]
              \tikzstyle{vertex}=[circle,draw=black,inner sep=.3pt,minimum size=2mm];
             \node[vertex] (x) at (0,0) {$x$};
             \node[vertex] (y) at (-1,1.5) {$y$};
             \node[vertex] (z) at (0,3) {$z$};
             \node[vertex] (inutile1) at (2,0) { };
             \node[vertex] (inutile2) at (2,3) { };
             \draw[->] (x) -- (y);
             \draw[->] (y) -- (z);
             \draw[->] (x) -- (z);
             \draw[->,dashed] (x) -- (inutile2);
             \draw[->,dashed] (inutile1) -- (inutile2);
             \draw[->,dashed] (inutile1) -- (z);
         \end{tikzpicture}\end{array}
         \]
        \caption{Graphs $G$ and $G_0$ in the first case of the proof of \cref{lem:IncreasingSize}.}
        \label{fig:case1}
    \end{figure}
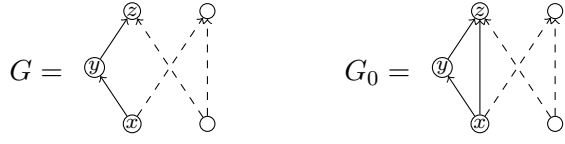

    \noindent {\em Case 2: the relation $\sim$ is not an equivalence relation.}

    By assumption, there exist vertices $x,y,z$ such that
    \begin{itemize}
     \item there is a path $(x, v_1, \cdots, v_k, z)$ from $x$ to $z$ in $G$;
     \item one has $x \sim y$ and $y \sim z$.
    \end{itemize}
    By definition of $\sim$, the graph $G_{(x,y)}$ is acyclic.
    Moreover, it does not contain a path from $z$ to $y$.
    Indeed, as $y \sim z$ in $G$, such a path should use the edge $(x,y)$
    and thus be the concatenation of a path from $z$ to $x$ with the edge $(x,y)$.
    But $G$ does not contain a path from $z$ to $x$ (indeed, it contains a path from $x$ to $z$
    and no directed cycles).
    
    Therefore, the graph $G_0$ obtained from $G_{(x,y)}$ by adding an edge from $y$ to $z$
    is an acyclic directed graph.
    However, its undirected version contains a cycle 
    \[C = (x,y,z,v_k,\cdots,v_1).\]
    Using the notation of \cref{sub:DefCIE}, for this cycle,
    one has $C^+ = \{ (x,y),(y,z) \}$.
    Hence,
    \[ \CIE_{G_0,C} = G_0 - G_0 \setminus \{(x,y)\} - G_0 \setminus \{(y,z)\} + G_0 \setminus \{(x,y),(y,z)\}.\]
    But $G_0 \setminus \{(x,y),(y,z)\}$ is $G$, so, in the quotient $\G / \ideal$, one has
    \[ G = - G_0 + G_0 \setminus \{(x,y)\} + G_0 \setminus \{(y,z)\} \]
    and the statement is proved in this case.\medskip

    This case is illustrated in \cref{fig:case2} with examples of graphs $G$ and $G_0$.
    Here, the dashed edge illustrates the fact that we do not know the length of the path $P$ from $x$ to $z$.
    Potential extra edges and vertices of $G$ and $G_0$ have not been represented for more readability.\medskip

    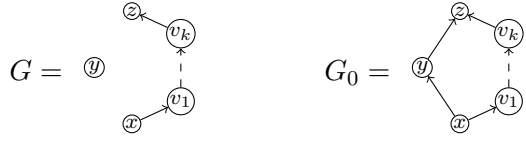
\begin{figure}[t]
        \[
        G=\begin{array}{c} \begin{tikzpicture}[scale=.5,font=\scriptsize]
              \tikzstyle{vertex}=[circle,draw=black,inner sep=.3pt,minimum size=2mm];
             \node[vertex] (x) at (0,0) {$x$};
             \node[vertex] (y) at (-1,1.5) {$y$};
             \node[vertex] (z) at (0,3) {$z$};
             \node[vertex] (chaine1) at (1.3,.6) { ${\tiny v_1}$};
             \node[vertex] (chaine2) at (1.3,2.4) { ${\tiny v_k}$};
%             \draw[->] (x) -- (y);
%             \draw[->] (y) -- (z);
             \draw[->] (x) -- (chaine1);
             \draw[->,dashed] (chaine1) -- (chaine2);
             \draw[->] (chaine2) -- (z);
         \end{tikzpicture}\end{array}
         \qquad \qquad
        G_0=\begin{array}{c} \begin{tikzpicture}[scale=.5,font=\scriptsize]
              \tikzstyle{vertex}=[circle,draw=black,inner sep=.3pt,minimum size=2mm];
             \node[vertex] (x) at (0,0) {$x$};
             \node[vertex] (y) at (-1,1.5) {$y$};
             \node[vertex] (z) at (0,3) {$z$};
             \node[vertex] (chaine1) at (1.3,.6) { ${\tiny v_1}$};
             \node[vertex] (chaine2) at (1.3,2.4) { ${\tiny v_k}$};
             \draw[->] (x) -- (y);
             \draw[->] (y) -- (z);
             \draw[->] (x) -- (chaine1);
             \draw[->,dashed] (chaine1) -- (chaine2);
             \draw[->] (chaine2) -- (z);
         \end{tikzpicture}\end{array}
         \]
        \caption{Graphs $G$ and $G_0$ in the second case of the proof of \cref{lem:IncreasingSize}.}
        \label{fig:case2}
    \end{figure}

    \noindent {\em Case 3: $G$ is the graph of a transitive relation and 
    the relation $\sim$ is an equivalence relation.}

    In this case, we will prove that $G$ is necessarily equal to $G_\II$,
    for some set composition $\II$.

    Let us start by a remark: in the graph of a transitive relation,
    the existence of a path from $x$ to $y$ implies the existence of an edge from
    $x$ to $y$.
    Hence $x \nsim y$ means that there is either an edge from $x$ to $y$ or from $y$ to $x$.

    Denote $(V_j)_{j \in J}$ the partition of the vertex set of $G$ into equivalence classes of $\sim$.
    Consider two such classes $V_j$ and $V_k$.
    We will prove that either $V_j \times V_k$ or $V_k \times V_j$ is included in $E_G$.

    Select arbitrarily a pair $(v_0,w_0)$ in $V_j \times V_k$.
    As $v_0 \nsim w_0$, by eventually swapping $v_0$ and $w_0$ (and simultaneously $j$ and $k$), 
    we may assume that $(v_0,w_0)$ is an edge of $G$.
 
    Then, for any $w$ in $V_k$, the pair $(v_0,w)$ is also an edge of $G$.
    Indeed, if this is not the case, as $v_0 \nsim w$, this would imply that $(w,v_0)$ is an edge of $V$.
    But, then by transitivity, $(w,w_0)$ should be an edge of $G$, which is impossible as $w \sim w_0$.

    The same argument proves that, for any $v$ in $V_j$, the pair $(v,w)$ must be an edge of $G$,
    which proves the inclusion of $V_j \times V_k$ in $E_G$.

    As we may have swapped $v_0$ and $w_0$ at the beginning,
    we have in fact proved that for any pair $(j,k)$ in $J^2$,
    either $V_j \times V_k$ or $V_k \times V_j$ is included in $E_G$.
    As $G$ does not have any directed cycle, there exists a total order $<_J$ on $J$
    such that $V_j \times V_k$ is included in $E_G$ if and only if $j <_J k$.
    
    By definition of $\sim$, there is no edges with both extremities in the same $V_j$.
    Besides, there can not be an edge from $V_k$ to $V_j$ (with $j<_J k$),
    as this would create a directed cycle of length $2$.
    Finally, the set of edges of $G$ is exactly
\[ \bigsqcup_{j <_J k} V_j \times V_k, \]
which means that $G=G_\II$ for $\II=(V_j)_{j \in J}$.
\end{proof}

Let $G$ be an acyclic directed graph.
Iterating \cref{lem:IncreasingSize}, one can write $G$ as an integer linear combination
of $G_\II$ in the quotient space $\G / \ideal$.
In other terms, $G_\II$ is a generating family of the vector space $\G / \ideal$.
\label{GI_Span_Quotient}

\subsection{First main result}
\label{sec:First_Main_Result}
We are now ready to prove the following statement.
\begin{theorem}
The space $\ideal$, spanned by cyclic inclusion-exclusion elements,
is the kernel of the surjective morphism $\GammaNC$
from $\G$ to $\WQSym$.
    \label{thm:NonCom_NonRestr}
\end{theorem}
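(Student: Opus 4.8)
The plan is to assemble the three ingredients already established into a short linear-algebra argument, exactly as announced earlier in this Section. By \cref{prop:CIE_in_Kernel} the subspace $\ideal$ is contained in the kernel of $\GammaNC$, so $\GammaNC$ descends to a well-defined linear map $\overline{\GammaNC}\colon \G/\ideal \to \WQSym$. Proving the theorem then amounts to showing that $\overline{\GammaNC}$ is injective; the surjectivity asserted in the statement will come for free.

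The two facts that drive the argument are the following. First, by \cref{corol:GamG_Zbasis}, the family $\big(\GammaNC(G_\II)\big)$, indexed by set compositions $\II$, is a $\Z$-basis of $\WQSym$, and in particular it is linearly independent. Second, iterating \cref{lem:IncreasingSize} shows that the classes $[G_\II]$ generate $\G/\ideal$. Thus $\overline{\GammaNC}$ carries a generating family of $\G/\ideal$ onto a linearly independent family of $\WQSym$.

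From here the conclusion is formal. Given $x \in \G/\ideal$ with $\overline{\GammaNC}(x)=0$, I would write $x = \sum_\II c_\II\, [G_\II]$ (a finite sum, possible since the $[G_\II]$ span). Applying $\overline{\GammaNC}$ yields $0 = \sum_\II c_\II\, \GammaNC(G_\II)$, and the linear independence of the $\GammaNC(G_\II)$ forces every $c_\II$ to vanish, whence $x=0$. Therefore $\overline{\GammaNC}$ is injective, which says precisely that $\ker \GammaNC = \ideal$. Surjectivity of $\GammaNC$ is immediate, its image already containing the basis $\big(\GammaNC(G_\II)\big)$ of $\WQSym$; in fact $\overline{\GammaNC}$ is an isomorphism and $\big([G_\II]\big)$ is a basis of $\G/\ideal$.

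The essential difficulty of the theorem has in fact been discharged beforehand: the two substantial inputs are the unitriangularity computations behind \cref{corol:GamG_Zbasis} and the three-case graph analysis of \cref{lem:IncreasingSize}. The only points requiring care at this final stage are structural rather than combinatorial---checking that $\ideal$ lies in the kernel so that the quotient map exists, and invoking in the correct direction the principle that a linear map sending a spanning set to a linearly independent set is injective. Since $\ideal$ is moreover a graded subspace (each $\CIE_{G,C}$ being a combination of graphs with a common vertex set), one could equivalently phrase the last step as a dimension count in each degree $n$, both the set compositions of $[n]$ indexing $(G_\II)$ and the basis elements of $\WQSym_n$ being counted by the ordered Bell numbers.
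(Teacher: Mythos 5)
Your proposal is correct and follows essentially the same route as the paper's own proof: both combine \cref{prop:CIE_in_Kernel}, the spanning statement obtained by iterating \cref{lem:IncreasingSize}, and the basis statement of \cref{corol:GamG_Zbasis}, and conclude by the standard fact that a map sending a spanning family onto a linearly independent family is an isomorphism onto its image. No gap to report.
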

\begin{proof}
    Denote $\Ker$ the kernel of $\GammaNC$. 
    By \cref{prop:CIE_in_Kernel}, it contains $\ideal$.
    On the one hand (\cref{GI_Span_Quotient}), we know that $\G / \ideal$
    is spanned by the family $(G_\II)$.
    On the other hand (\cref{corol:GamG_Zbasis}),
    the family $\GammaNC(G_\II)$ is a basis of $\WQSym$,
    which implies in particular that the $(G_\II)$ are linearly independent
    in $\G/\Ker$ and hence in $\G / \ideal$.

    Therefore $(G_\II)$ is a basis of $\G / \ideal$
    and $\GammaNC$ is an isomorphism from $\G / \ideal$ to $\WQSym$ 
    (it sends a basis on a basis), which concludes the proof.
\end{proof}

\begin{remark}
    In fact, we have proved a stronger result:
    the subspace of $\G$ spanned by cyclic inclusion-exclusion associated to
    cycles $C$ with $|C^+|=1$ and $|C^+|=2$ is the kernel of $\Gamma$
    (and hence coincides with $\ideal$).
\end{remark}
\subsection{Unlabeled commutative framework and second main result}
Consider a unlabeled directed graph $\ov{G}$ and a cycle
$\ov{C}$ of the undirected version of $\ov{G}$.
As in \cref{sub:DefCIE}, we can define $\ov{C}^+$ and an element 
\[\ov{\CIE}_{\ov{G},\ov{C}} = \sum_{D \subseteq \ov{C}^+} 
(-1)^{|D|} \ov{G} \setminus D.\]
But $\ov{G}$ is the equivalence class of some graph $G$,
whose undirected version contains a cycle $C$, which projects on $\ov{C}$.
With this in mind,
$\ov{\CIE}_{\ov{G},\ov{C}}$ 
is simply the image of $\CIE_{G,C}$ by the morphism $\phi_u:\G \to \ov{\G}$.

Let us consider the subspace $\ov{\ideal}$ of $\ov{\G}$ spanned 
by cyclic inclusion-exclusion elements.
Equivalently this is the image of $\ideal$ by the morphism $\phi_c$.

\begin{theorem}
    The ideal $\ov{\ideal}$, spanned by inclusion-exclusion elements,
is the kernel of the surjective morphism $\Gamma$
from $\ov{\G}$ to $\QSym$.
    \label{thm:Com_NonRestr}
\end{theorem}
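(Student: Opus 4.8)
The plan is to deduce \cref{thm:Com_NonRestr} from the already-established \cref{thm:NonCom_NonRestr} by pushing the isomorphism $\GammaNC : \G/\ideal \xrightarrow{\sim} \WQSym$ through the commutative diagram of \cref{sub:Gessel}. Recall that $\QSym$ is the quotient of $\WQSym$ by the subspace $\NNN$ spanned by the elements $x-\sigma(x)$ for $x \in \WQSym_d$ and $\sigma \in S_d$, and that $\ov{\G}$ is the analogous quotient of $\G$ by the subspace spanned by $x - \sigma(x)$ for $x \in \G_d$. The commutativity of the diagram, namely $\phi_c \circ \GammaNC = \Gamma \circ \phi_u$, together with surjectivity of $\phi_c$ and of $\GammaNC$, gives surjectivity of $\Gamma$ immediately, which is the first claim.

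For the kernel, I would argue as follows. Since $\ov{\ideal} = \phi_u(\ideal)$ by definition, and $\ideal = \Ker(\GammaNC)$ by \cref{thm:NonCom_NonRestr}, the inclusion $\ov{\ideal} \subseteq \Ker(\Gamma)$ follows from the commutativity of the diagram: if $x \in \ideal$ then $\Gamma(\phi_u(x)) = \phi_c(\GammaNC(x)) = \phi_c(0) = 0$. The reverse inclusion is the substantive part. Take $\ov{G} \in \ov{\G}$ with $\Gamma(\ov{G}) = 0$, and lift it to some $G \in \G$ with $\phi_u(G) = \ov{G}$. Then $\phi_c(\GammaNC(G)) = \Gamma(\ov{G}) = 0$, so $\GammaNC(G) \in \Ker(\phi_c) = \NNN$; that is, $\GammaNC(G)$ is a linear combination of terms $y - \tau(y)$ with $y \in \WQSym$ and $\tau$ a permutation.

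The key step is then to realize this symmetrization relation already at the level of graphs modulo $\ideal$. Because $\GammaNC$ is an isomorphism from $\G/\ideal$ onto $\WQSym$, I can write each $y$ appearing above as $\GammaNC(H)$ for a unique class $H \in \G/\ideal$, and $\tau(y) = \GammaNC(\tau(H))$ since $\GammaNC$ intertwines the $S_d$-actions (permuting vertex labels on a graph corresponds to permuting the factors $a_{f(i)}$ in each monomial, which is exactly the $S_d$-action on $\WQSym$). Hence, modulo $\ideal$, the element $G$ equals a linear combination of terms $H - \tau(H)$. Applying $\phi_u$ kills every such term, since $\phi_u(H - \tau(H)) = 0$ by the very definition of $\ov{\G}$ as a quotient by these relations. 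Therefore $\ov{G} = \phi_u(G)$ lies in $\phi_u(\ideal) = \ov{\ideal}$, completing the reverse inclusion and hence the proof.

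The main obstacle I anticipate is the bookkeeping needed to transfer the symmetrization relation from $\WQSym$ back to $\G$ through the isomorphism: one must verify carefully that $\GammaNC$ is genuinely $S_d$-equivariant (so that $\GammaNC(\tau(H)) = \tau(\GammaNC(H))$) and that the two quotient maps $\phi_u$ and $\phi_c$ are defined by corresponding relations, so that the diagram is not merely commutative but matches the two quotient constructions. Granting the equivariance, the argument is essentially a diagram chase: $\Ker(\Gamma) = \phi_u(\GammaNC^{-1}(\NNN))$ and one checks that $\GammaNC^{-1}(\NNN)$, modulo $\ideal$, consists exactly of the graph symmetrization relations that $\phi_u$ collapses, giving $\Ker(\Gamma) = \phi_u(\ideal) = \ov{\ideal}$.
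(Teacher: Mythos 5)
Your proof is correct and follows essentially the same route as the paper: both deduce the commutative statement from \cref{thm:NonCom_NonRestr} using the $S_d$-equivariance of $\GammaNC$ and the descriptions of $\ov{\G}$ and $\QSym$ as quotients by the symmetrization relations $x-\sigma(x)$. The paper merely packages your element-wise diagram chase as a chain of isomorphisms, quotienting by the CIE relations and the symmetrization relations in either order.
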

\begin{proof}
    This follows from \cref{thm:NonCom_NonRestr}, and the fact that the morphism
    $\GammaNC$ is compatible with the action of $S_n$ on homogeneous components
    described in \cref{sec:WQSym,sec:graphs}.
    Indeed, one can write
\begin{multline*} \ov{\G} / \langle \ov{\CIE}_{\ov{G},\ov{C}} \rangle 
\simeq \big( \G / \langle x - \sigma.x \rangle \big) / \langle \ov{\CIE_{G,C}} \rangle
\simeq \G / \langle x - \sigma.x, \CIE_{G,C} \rangle \\
\simeq   \big(\G / \langle \CIE_{G,C} \rangle \big) / \langle x - \sigma.x \rangle
\simeq  \WQSym / \langle x - \sigma.x \rangle
\simeq \QSym. \qedhere
\end{multline*}
\end{proof}

\begin{remark}
    The function $\Gamma\big(\ov{G_\II}\big)$ in $\QSym$ depends only 
    on the integer composition $I=\phi_c(\II)$.
    Therefore, from \cref{sec:Zbasis_NonRes}, we know that this family, indexed by integer compositions,
    is a $\Z$-basis of $\QSym$.
    This family has appeared in a paper of Stanley \cite[Note p7]{StanleyDescentConnectivitySets}
    which noticed that the change of basis matrix with the fundamental basis is unitriangular
    (commutative version of \cref{prop:GamG_on_Fbasis}).
\end{remark}

\begin{remark}
    A direct proof of \cref{thm:Com_NonRestr} along the same lines 
    as the proof of \cref{thm:NonCom_NonRestr} is of course possible.
\end{remark}

\section{The kernel in the bipartite case}
\label{sec:Bip}

The purpose of this Section is to show that the kernel of
$\Gamma$ and $\GammaNC$ restricted to bipartite graphs
is also generated by cyclic-inclusion relations.

\subsection{Preliminaries for the bipartite setting}
Recall that a directed graph is called {\em bipartite} if its vertex set can be split in $V \sqcup W$,
such that if $(v,w) \in E$, then $v$ lies in $V$ and $w$ in $W$ (in other words,
the edge set is included in $V \times W$).
Note that this bipartition is not unique as isolated vertices can be either in $V$ or $W$,
but this is the only degree of freedom.

The subalgebra of the graph algebra $\G$ spanned by bipartite graphs
will be denoted $\BG$.
If $B$ is a bipartite graph and $C$ a cycle in the undirected version of $B$,
then the cyclic inclusion-exclusion element $\CIE_{B,C}$ lies in $\BG$.
We denote $\Bideal$ the subspace of $\BG$ spanned by these elements.

Finally, we consider the restriction of $\GammaNC$ to $\BG$, that we denote $\BGammaNC$.
Clearly, from \cref{prop:CIE_in_Kernel}, the space $\Bideal$ is included in the kernel of $\BGammaNC$.

\begin{remark}
    \label{rem:why_non_trivial}
    The kernel of $\BGammaNC$ is, from \cref{thm:NonCom_NonRestr}, equal to
    $\ideal \cap \BG$.
    But, even if $\ideal$ is by definition generated by cyclic inclusion-exclusion elements,
    we do not know {\em a priori} whether this intersection is spanned by the cyclic inclusion-exclusion
    elements that lie in it.
\end{remark}

\subsection{The bipartite graphs $B_{(\II,\JJ)}$}
Consider a set composition of $[n]$.
In the following, it will be convenient to distinguish odd and even-indexed parts
of the composition.
Therefore we denote $I_1$ its first part, $J_1$ its second part, $I_2$ its third and so on
until $J_r$ which is eventually empty if the number of parts of the set composition
is odd.
In this context, a set composition is denoted $(\II,\JJ)$
and $r$ is called its {\em semi-length}.
We draw the attention of the reader on the fact that, from this viewpoint,
a pair $(\II,\JJ)$ is a {\em single} set composition and not a pair
of set compositions.
\begin{definition}
Let $(\II,\JJ)$ be a set composition of $[n]$.
We consider the bipartite directed graph $B_{(\II,\JJ)}$ with vertex set $[n]$
and edge set
\[ \bigsqcup_{h < k} I_h \times J_k. \]
\end{definition}

\begin{example}
    \label{ex:BIJ}
    Consider the set composition $26|4|5|17|3$.
    With the notations of this section, it writes as $(\II_\ex,\JJ_\ex)=(26|5|3,4|17|)$ 
    (in this case , $J_3$ is empty, which explains the vertical bar at the end
    of the numerical notation of $\JJ_\ex$).
    The graph 
    $B_{(\II_\ex,\JJ_\ex)}$ and the associated word quasi symmetric function are
     \begin{equation}
        \!\!  B_{(\II_\ex,\JJ_\ex)}=\begin{array}{c}
         \begin{tikzpicture}[font=\tiny,scale=.27]
        \tikzstyle{vertex}=[circle,draw=black,inner sep=.3pt];
        \node[vertex] (v2) at (0,0) {$2$};
        \node[vertex] (v6) at (1.5,0) {$6$};
        \node[vertex] (v4) at (.75,2.5) {$4$};
        \node[vertex] (v5) at (4.5,0) {$5$};
        \node[vertex] (v1) at (3.75,2.5) {$1$};
        \node[vertex] (v7) at (5.25,2.5) {$7$};
        \node[vertex] (v3) at (7.5,0) {$3$};
        \draw[->] (v2) -- (v1);
        \draw[->] (v2) -- (v4);
        \draw[->] (v2) -- (v7);
        \draw[->] (v6) -- (v1);
        \draw[->] (v6) -- (v4);
        \draw[->] (v6) -- (v7);
        \draw[->] (v5) -- (v1);
        \draw[->] (v5) -- (v7);
        \end{tikzpicture}
     \end{array}; \quad
     \GammaNC(B_{(\II_\ex,\JJ_\ex)}) = \!\!\! \sum_{ \gf{k_1,\dots,k_7}{
    \gf {\max(k_2,k_6) \le \min(k_1,k_4,k_7) }{k_5 \le \min(k_1,k_7)}}}
    \!\!\! a_{k_1} \cdots a_{k_7}.
    \label{eq:GamBIJex}
\end{equation}
\end{example}

\subsection{A combinatorial lemma}
If $V \sqcup W = [n]$ is a bipartition of $[n]$,
we denote $K_{V,W}$ the complete directed bipartite graph between $V$
and $W$, that is the graph with vertex set $[n]$ and edge set $V \times W$.
Let $D$ be a subset of $V \times W$.
Then we consider the directed graph $K^D$ obtained from $K_{V,W}$ by 
turning the edges in $D$ around (in general, $K^D$ is not a directed bipartite graph).

For example, consider $V=\{1,2,4,6\}$ and $W=\{3,5\}$.
The corresponding complete bipartite graph is the left-most graph in \cref{fig:Turning_Edges_Around}.
We now choose a subset of $V \times W$, {\em e.g.} $D=\{(2,3),(6,3)\}$.
The corresponding graph $K^D$ is drawn
in the middle of \cref{fig:Turning_Edges_Around}.\medskip

\begin{figure}[t]
    \[
    \begin{array}{c}   \begin{tikzpicture}[font=\scriptsize,scale=.32]
        \tikzstyle{vertex}=[circle,draw=black,inner sep=.3pt];
        \node[vertex] (v1) at (0,0) {$1$};
        \node[vertex] (v2) at (1.5,0) {$2$};
        \node[vertex] (v4) at (3,0) {$4$};
        \node[vertex] (v6) at (4.5,0) {$6$};
        \node[vertex] (v3) at (1.25,2.5) {$3$};
        \node[vertex] (v5) at (3.25,2.5) {$5$};
        \draw[->] (v1) -- (v3);
        \draw[->] (v1) -- (v5);
        \draw[->] (v2) -- (v3);
        \draw[->] (v2) -- (v5);
        \draw[->] (v4) -- (v3);
        \draw[->] (v4) -- (v5);
        \draw[->] (v6) -- (v3);
        \draw[->] (v6) -- (v5);
    \end{tikzpicture} \end{array} 
    \qquad
    \begin{array}{c}   \begin{tikzpicture}[font=\scriptsize,scale=.32]
        \tikzstyle{vertex}=[circle,draw=black,inner sep=.3pt];
        \node[vertex] (v1) at (0,0) {$1$};
        \node[vertex] (v2) at (1.5,0) {$2$};
        \node[vertex] (v4) at (3,0) {$4$};
        \node[vertex] (v6) at (4.5,0) {$6$};
        \node[vertex] (v3) at (1.25,2.5) {$3$};
        \node[vertex] (v5) at (3.25,2.5) {$5$};
        \draw[->] (v1) -- (v3);
        \draw[->] (v1) -- (v5);
        \draw[very thick,->] (v3) -- (v2);
        \draw[->] (v2) -- (v5);
        \draw[->] (v4) -- (v3);
        \draw[->] (v4) -- (v5);
        \draw[very thick,->] (v3) -- (v6);
        \draw[->] (v6) -- (v5);
    \end{tikzpicture} \end{array} 
    \qquad
    \begin{array}{c}   \begin{tikzpicture}[font=\scriptsize,scale=.27]
        \tikzstyle{vertex}=[circle,draw=black,inner sep=.3pt];
        \node[vertex] (v1) at (0,0) {$1$};
        \node[vertex] (v2) at (0,3) {$2$};
        \node[vertex] (v4) at (3,0) {$4$};
        \node[vertex] (v6) at (3,3) {$6$};
        \node[vertex] (v3) at (1.5,1.5) {$3$};
        \node[vertex] (v5) at (1.5,4.5) {$5$};
        \draw[->] (v1) -- (v3);
        \draw[->] (v1) to [bend left=78] (v5);
        \draw[->] (v3) -- (v2);
        \draw[->] (v2) -- (v5);
        \draw[->] (v4) -- (v3);
        \draw[->] (v4) to [bend right=78] (v5);
        \draw[->] (v3) -- (v6);
        \draw[->] (v6) -- (v5);
    \end{tikzpicture} \end{array} 
    \]
    \caption{A complete bipartite graph (left-most graph), 
    the graph obtained after turning some edges around (middle graph)
    and a graph from the family $H_{(\II,\JJ)}$ (right-most graph).
    Note that the last two are identical.}
    \label{fig:Turning_Edges_Around}
\end{figure}
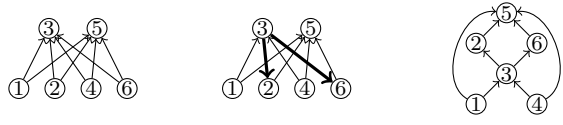

We are also interested in the following family of graphs.
If $(\II,\JJ)$ is a set composition of $[n]$, we define $H_{(\II,\JJ)}$ as the graph
with vertex set $[n]$ and edge set
\[ \bigsqcup_{m \le m'} (I_m \times J_{m'}) \sqcup 
\bigsqcup_{m < m'} (J_m \times I_{m'}). \]
%%%%%% INUTILE %%%%%%%%%%%%%%%%%%%%
%Note that $H_{(\II,\JJ)}$ is close to the graph $G_\KK$ from \cref{sec:GI},
%where $\KK=(I_1,J_1,I_2,\cdots)$,
%except that we only have edges between levels with different parity.
%%%%%%%%%%%%%%%%%%%%%%%%%%%%%%%%%%%

As an example, let us choose $\II=14|26$ and $\JJ=3|5$, that is $\KK=14|3|26|5$.
The corresponding graph $H_{(\II,\JJ)}$ is the right-most graph of
\cref{fig:Turning_Edges_Around}.
The examples have been chosen so that $H_{(\II,\JJ)}$ and $K^D$ are the same graph.
We will now see that the family $H_{(\II,\JJ)}$ roughly corresponds
to the family of {\em acyclic} graphs among the $K^D$.

The following lemma will be useful in the next Section.
\begin{lemma}
    Let $V$, $W$ and $D$ as above.
    Assume that each vertex in $W$ is the extremity of at 
    least one edge not in $D$.
    Then, either $K^D$ contains a directed cycle,
    or there exists a set composition $(\II,\JJ)$
    with $\bigsqcup_{1 \le k \le r}  I_k = V$ and $\bigsqcup_{1\le k \le r} J_k = W$ 
    such that $K^D=H_{(\II,\JJ)}$.

    Moreover, each such set composition $(\II,\JJ)$ corresponds to exactly
    one set $D$ such that $K^D$ is acyclic. 
    \label{lem:Reversing_Edges}
\end{lemma}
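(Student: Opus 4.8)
The plan is to treat the statement as a classification of the acyclic orientations of the complete bipartite graph $K_{V,W}$. Note first that between every pair $(v,w)\in V\times W$ the graph $K^D$ has exactly one edge, oriented $v\to w$ if $(v,w)\notin D$ and $w\to v$ if $(v,w)\in D$, and it has no edges inside $V$ or inside $W$; thus $K^D$ is an orientation of $K_{V,W}$. If $K^D$ contains a directed cycle the first alternative of the lemma holds, so from now on I would assume $K^D$ is acyclic and produce the set composition $(\II,\JJ)$. To this end I introduce the rank function $\rho(u)$, defined as the length (number of edges) of a longest directed path of $K^D$ ending at $u$; this is well defined because $K^D$ is a finite acyclic digraph, and $\rho(u)=1+\max_{u'\to u}\rho(u')$ whenever $u$ has an in-neighbour, while $\rho(u)=0$ exactly when $u$ is a source.

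The key step is to show that, under the hypothesis that every $w\in W$ has in-degree at least $1$, the rank $\rho$ is even on $V$ and odd on $W$. First, the hypothesis says no vertex of $W$ is a source, so every source lies in $V$ and has rank $0$. I then argue by strong induction on $\rho(u)$: if $\rho(u)=k\ge 1$, pick an in-neighbour $u'$ with $\rho(u')=k-1$; since $K^D$ is bipartite, $u'$ lies in the part opposite to $u$, and by induction the parity of $k-1=\rho(u')$ matches the part of $u'$, forcing the parity of $k$ to match the part of $u$. I would also record the standard no-gap property of $\rho$: a longest path $u_0\to\cdots\to u_k=u$ satisfies $\rho(u_i)=i$, so every value in $\{0,\dots,\max_u\rho(u)\}$ is attained. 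Defining $I_m=\{v\in V:\rho(v)=2(m-1)\}$ and $J_m=\{w\in W:\rho(w)=2m-1\}$ then yields, after discarding the empty trailing part, a genuine set composition $(\II,\JJ)$ of $[n]$ with $\bigsqcup_k I_k=V$ and $\bigsqcup_k J_k=W$.

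It remains to check $K^D=H_{(\II,\JJ)}$. For $v\in I_m$ and $w\in J_{m'}$ there is exactly one edge between them; if it is $v\to w$ then $\rho(w)\ge\rho(v)+1$, i.e. $2m'-1\ge 2m-1$, giving $m\le m'$, and if it is $w\to v$ then $\rho(v)\ge\rho(w)+1$, i.e. $2(m-1)\ge 2m'$, giving $m'<m$. Since the two conditions $m\le m'$ and $m'<m$ are complementary, the orientation of each edge is determined by them, which is exactly the edge rule defining $H_{(\II,\JJ)}$. This proves $K^D=H_{(\II,\JJ)}$ and hence the dichotomy.

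For the final sentence I would turn the construction into a bijection. Given any admissible $(\II,\JJ)$ (all parts non-empty except possibly $J_r$), the set $D=\{(v,w):v\in I_m,\ w\in J_{m'},\ m'<m\}$ is the unique subset with $K^D=H_{(\II,\JJ)}$; moreover $H_{(\II,\JJ)}$ is acyclic because $I_1<J_1<I_2<\cdots$ is a topological order, and the in-degree hypothesis holds automatically since $I_1\ne\emptyset$ forces every $w$ to receive an edge from $I_1$. Conversely the rank construction recovers $(\II,\JJ)$ from $K^D$, so the two assignments are mutually inverse, giving the claimed one-to-one correspondence. The main obstacle is the parity statement in the second paragraph: everything else is bookkeeping, but it is the evenness/oddness of $\rho$ that both makes the alternating reading $I_1,J_1,I_2,\dots$ meaningful and pins down exactly where the in-degree hypothesis is used.
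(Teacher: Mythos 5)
Your proof is correct, and it reorganizes the argument in a way that differs from the paper's, although the resulting decomposition is the same. The paper proceeds by iterated peeling: it defines $I_1$ as the set of vertices of $V$ none of whose edges have been reversed (i.e.\ the sources of $K^D$), shows $I_1\neq\emptyset$ by the ``every vertex has an in-edge, hence follow in-edges backwards to find a directed cycle'' argument, then defines $J_1$, shows it is non-empty by the same argument applied to the induced subgraph on $[n]\setminus I_1$, and iterates. Your level sets $I_m=\{v\in V:\rho(v)=2(m-1)\}$ and $J_m=\{w\in W:\rho(w)=2m-1\}$ coincide with the paper's layers, but you derive their properties globally from the longest-path rank function: acyclicity gives well-definedness and the no-gap property, and the hypothesis that no vertex of $W$ is a source enters once, through the parity claim ($\rho$ even on $V$, odd on $W$), rather than once per peeling step. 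This buys a cleaner, non-iterative argument in which the cycle-finding is packaged into the single standard fact that a finite digraph with no source has a directed cycle (equivalently, that $\rho$ exists); the verification $K^D=H_{(\II,\JJ)}$ then follows from the complementarity of $m\le m'$ and $m'<m$ exactly as you say. The paper's version is more elementary and self-contained but repeats the same contradiction argument at every level. Your treatment of the last sentence also goes slightly further than needed: the lemma only asks that each admissible $(\II,\JJ)$ determine a unique $D$ (which, as the paper notes, is immediate since $D$ is the set of reversed edges of $H_{(\II,\JJ)}$, and $H_{(\II,\JJ)}$ is acyclic via the topological order $I_1,J_1,I_2,\dots$), whereas you additionally check that the rank construction inverts this map, upgrading the correspondence to a genuine bijection; that extra claim is true (the position of a block strictly increases along any directed path of $H_{(\II,\JJ)}$, which pins down $\rho$ on each block) and harmless.
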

\begin{proof}
    Assume $K^D$ is acyclic.
    Denote $I_1$ the subset of $V$ of elements $x$ such that 
    \[\{(x,y), y \in W\} \cap D =\emptyset,\]
    {\it i.e.} none of the edges starting $x$ have been turned around.

    We will prove by contradiction that $I_1$ is non empty.
    Assume $I_1=\emptyset$. 
    Then $K^D$ 
    is a directed graph, where all vertices have at least one incoming edge
    (vertices in $W$ have at least one incoming edge because of our hypothesis
    and vertices in $V$ have an incoming edge because $I_1$ is empty). 
    Such a graph necessarily contains a directed cycle (start from an arbitrary vertex
    and follow backwards incoming edges until you encounter twice the same vertex, 
    which will happen eventually; you have found a directed cycle).

    Thus $I_1$ is non-empty and,
    by construction, $I_1 \times W$ is included
    in the edge set $E(K^D)$ of $K^D$.

    Consider now the set $J_1$ of elements $y$ such that
    \[\{(x,y), x \in V \setminus I_1\} \subseteq D,\]
    {\it i.e.} all edges going to $y$, except those starting from an element of $I_1$,
    have been turned around.
    
    We will prove by contradiction that $J_1$ is non empty.
    Assume $J_1=\emptyset$. 
    Then the graph induced by $K^D$ on the set $[n] \setminus I_1$
    is a directed graph, where all vertices have at least one incoming edge
    (vertices in $W$ have at least one incoming edge in this induced graph
    because we have assumed $J_1$ empty
    and vertices in $V \setminus I_1$ have an incoming edge because they do not
    belong to $I_1$). 
    This graph should contain a directed cycle and we reach a contradiction.

    Thus $J_1$ is non-empty and, by construction,
    $J_1 \times (V \setminus I_1)$ is included in $E(K^D)$.

    Consider now the subset $I_2$ of $V \setminus I_1$ of elements $x$ such that     
    \[\{(x,y), y \in (W \setminus J_1) \} \cap D =\emptyset.\]
    The same proof as above (considering the graph induced on $[n] \setminus (I_1 \cup J_1)$)
    shows that, if $I_1 \subsetneq V$, then $I_2$ is non-empty.
    By construction, $I_2 \times (W \setminus J_1)$ is included in $E(K^D)$.

    We keep going like this, defining, for each $m \ge 1$,
    \begin{align*}
        I_m &= \bigg\{ x \in V_{m-1} \text{ s.t. }
        \{(x,y), y \in W_{m-1}  \cap D\} =\emptyset \bigg\} ;\\
        J_m &= \bigg\{ y \in W_{m-1}  \text{ s.t. }
     \{(x,y), x \in V_m \} \subseteq D \bigg\},
    \end{align*}
    where we set $V_m=V  \setminus (I_1 \cup \dots \cup I_{m})$ and 
    $W_m=W  \setminus (J_1 \cup \dots \cup J_{m})$.
    We stop the construction when $I_1 \sqcup  \cdots \sqcup I_r =V$,
    which automatically implies $J_1 \sqcup  \cdots \sqcup J_r =W$.
    Then the argument above shows that
    all sets $I_m$ and $J_m$, except possibly $J_r$, are non-empty
    (which explains that the construction above always ends)
    and, by construction, if $1\le m\le r$,
    \begin{align*}
        I_m \times W_{m-1} \subseteq E(K^D); \\
        J_m \times V_{m} \subseteq E(K^D).
    \end{align*}
    In other terms, the edge set of $K^D$ contains the one of $H_{(\II,\JJ)}$.
    But for all $(v,w)$ in $V \times W$, either $(v,w)$ or $(w,v)$ is an edge
    of $H_{(\II,\JJ)}$, so that $K^D$ cannot have more edges.
    Thus $K^D=H_{(\II,\JJ)}$, as wanted.

    The fact that each set composition
    with $\bigsqcup_{1 \le k \le r}  I_k = V$ and $\bigsqcup_{1\le k \le r} J_k = W$ 
    corresponds to exactly one set $D$ is trivial:
    just take $D$ as the set of edges which are oriented from $W$ to $V$
    in the graph $H_{(\II,\JJ)}$.
\end{proof}

\subsection{Another $\Z$-basis of $\WQSym$}
\label{sec:Zbasis_Res}
The purpose of this section is to prove that
the word quasi-symmetric functions $\GammaNC(B_{(\II,\JJ)})$
form a $\Z$-basis of $\WQSym$, when $(\II,\JJ)$ runs over all set compositions.

As in \cref{sec:Zbasis_NonRes}, we use an intermediate family.
If $(\II,\JJ)$ is a set composition of $[n]$, define
\begin{equation}
    \NN_{(\II,\JJ)} = \sum a_{k_1} \cdots a_{k_n},
    \label{eq:def_NIJ}
\end{equation}
where the sum runs over lists $(k_1,\dots,k_n)$ that satisfy:
\begin{itemize}
    \item if $x$ is in $I_m$ and $y$ in $J_m$ for some index $m \le r$,
        then $k_x \le k_y$ ;
    \item if $x$ is in $J_m$ and $y$ in $I_{m+1}$ for some index $m \le r-1$,
        then $k_x < k_y$.
\end{itemize}
For example, continuing \cref{ex:BIJ}, one has:
\[\NN_{(\II_\ex,\JJ_\ex)}= \sum_{ \gf{k_1,\dots,k_7}{
\gf {\max(k_2,k_6) \le k_4 < k_5 \le \min(k_1,k_7)}{\max(k_1,k_7) < k_3}} }
    \!\!\! a_{k_1} \cdots a_{k_7}.\]
In general, denote $m(x)$ the index $m$ such that $x$ lies in $I_m$ or $J_m$.
Then the inequalities above on the indices $k_x$ automatically
imply that $k_x < k_y$ whenever $m(x) < m(y)$.
\begin{remark}
    The family $(\NN_{(\II,\JJ)})$ has been recently considered by the author
    and coauthors in~\cite{AvecLesJeansFPSAC}
    (our family $\NN_{(\II,\JJ)}$ corresponds
    to $\FF(\PP_\KK)$ with the notations of~\cite{AvecLesJeansFPSAC}).
    The commutative projection of $\NN_{(\II,\JJ)}$ had appeared before:
    indeed, it coincides with a $\Z$-basis of $\WQSym$
    introduced by K. Luoto in \cite{LuotoBasisQSym}
    (denoted $N$ in Luoto's paper).
\end{remark}
\begin{proposition}
    The family $(\NN_{(\II,\JJ)})$, where $(\II,\JJ)$ runs over all set compositions,
    is a $\Z$-basis of $\WQSym$.
\end{proposition}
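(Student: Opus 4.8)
The plan is to follow the strategy of \cref{sec:Zbasis_NonRes}: since the family $(\NN_{(\II,\JJ)})$ is indexed by set compositions of $[n]$, it has exactly $\dimension \WQSym_n$ elements, so it suffices to prove that its change-of-basis matrix against one of the known $\Z$-bases of $\WQSym$ is unimodular. The natural first move is to expand $\NN_{(\II,\JJ)}$ in the monomial basis. Because whether a monomial $\bm{a}_\kk$ occurs in $\NN_{(\II,\JJ)}$ depends only on the set composition $\Delta(\kk)$, this expansion is multiplicity-free: writing $\mathrm{lev}_\KK(x)$ for the index of the block of $\KK$ containing $x$, I would first check that
\[ \NN_{(\II,\JJ)} = \sum_{\KK} \M_\KK, \]
the sum ranging over all set compositions $\KK$ satisfying $\max_{x\in I_m}\mathrm{lev}_\KK(x) \le \min_{y\in J_m}\mathrm{lev}_\KK(y)$ and $\max_{x\in J_m}\mathrm{lev}_\KK(x) < \min_{y\in I_{m+1}}\mathrm{lev}_\KK(y)$ for all admissible $m$. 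This is the exact analogue of the description of $\GammaNC(G_\II)$ underlying \cref{prop:GamG_on_Fbasis}, the novelty being the \emph{strict} inequalities contributed by the $J_m$--$I_{m+1}$ boundaries.

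In contrast with the non-restricted case, this monomial matrix is \emph{not} triangular (already in degree $2$ every row has at least two nonzero entries, so no reordering makes it unitriangular). I would therefore compose it with the unitriangular transition from $\M$ to the basis $(\L_{(\sigma,D)})$ of \cref{lem:Bij_MP} (equivalently, pass all the way to the fundamental basis $(\F_{(\sigma,D)})$). The goal is then to exhibit, for each $(\II,\JJ)$, a distinguished leading term---obtained by standardizing a generic admissible sequence $\kk$, that is, sorting each part of $(\II,\JJ)$ and recording the forced strict steps at the $J_m$--$I_{m+1}$ boundaries as starred descents---and to show that the resulting map from set compositions to descent-starred permutations is a bijection with respect to which the matrix of $(\NN_{(\II,\JJ)})$ is triangular with diagonal entries $\pm 1$. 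Invertibility over $\Z$ would then follow verbatim as in \cref{corol:GamG_Zbasis}.

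The hard part is the choice of this leading-term map. The obvious guess---pairing $(\II,\JJ)$ with the descent-starred permutation of $(\II,\JJ)$ itself, as in \cref{lem:Bij_MP}---\emph{fails}: already in degree $3$ the set compositions underlying $1|2|3$ and $1|23$ compete for the same leading term, so no single order can triangularize the matrix. The point is that the strict inequalities produce off-diagonal terms of two opposite kinds---coarser terms coming from merges across the strict boundaries, and finer terms coming from the freedom inside each part---and the correct matching must reorder the parts across the $J_m$--$I_{m+1}$ boundaries so as to separate these two phenomena. Proving that this corrected matching is bijective, and that the chosen order simultaneously pushes down both families of off-diagonal terms, is the main technical obstacle; I expect the edge-reversal combinatorics of \cref{lem:Reversing_Edges} to be exactly what controls it, since turning the edges around a strict boundary is precisely the operation exchanging the two competing terms.
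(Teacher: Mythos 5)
The paper does not prove this proposition by a triangularity argument at all: it simply cites \cite[Proposition 5.4]{AvecLesJeansFPSAC}, and the remark immediately following the proposition explains that the proof there proceeds by evaluating the $\NN_{(\II,\JJ)}$ on a virtual alphabet, turning them into a two-alphabet version whose linear independence is easy to observe. More importantly, that same remark states explicitly that the authors \emph{were not able} to find another $\Z$-basis of $\WQSym$ against which the change-of-basis matrix of $(\NN_{(\II,\JJ)})$ is unitriangular, and that finding such an elementary proof ``would certainly be interesting.'' Your proposal is precisely an attempt at the route the authors report having failed to complete.

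The gap in your argument is concrete: everything hinges on the ``corrected matching'' from set compositions $(\II,\JJ)$ to descent-starred permutations and on an order with respect to which the matrix against $(\L_{(\sigma,D)})$ or $(\F_{(\sigma,D)})$ becomes triangular with $\pm 1$ diagonal. You correctly observe that the obvious candidate fails already in degree $3$, but you never construct the replacement; you only state that you ``expect'' the edge-reversal combinatorics of \cref{lem:Reversing_Edges} to control it. That is the entire content of the proof, and it is missing. Your preliminary observations (the multiplicity-free $\M$-expansion of $\NN_{(\II,\JJ)}$, the non-triangularity of the monomial matrix, and the fact that the index set has the right cardinality so that unimodularity of a square matrix would suffice) are all correct, but they only set the stage. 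As written, the proposal does not establish linear independence, let alone $\Z$-spanning, and given the authors' stated experience there is real reason to doubt that the triangularization you envision exists in the form you describe. To repair the proof along the paper's lines you would instead specialize to a two-alphabet evaluation as in \cite{AvecLesJeansFPSAC}; to repair it along your own lines you would need to actually exhibit the bijection and the order and verify triangularity, which is an open combinatorial task, not a routine verification.
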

\begin{proof}
    See \cite[Proposition 5.4]{AvecLesJeansFPSAC}.
\end{proof}
\begin{remark}
    A surprising fact in this proof is that we have not been able
    to find some other $\Z$-basis of $\WQSym$ with a unitriangular 
    change-of-basis matrix.
    The proof uses an evaluation on a virtual alphabet
    which turns $(\NN_{(\II,\JJ)})$ into a two-alphabet version,
    whose linear independence is easy to observe.
    
    Such a trick is not needed in the commutative setting --
    see \cite[proof of Theorem 3.4]{LuotoBasisQSym}.
    Finding a more elementary proof in the noncommutative setting
    would certainly be interesting.
\end{remark}

A nice feature of this basis is that,
for any bipartite graph $B$, 
the associated word quasi-symmetric function $\GammaNC(B)$
can be written as a multiplicity-free sum of $\NN$ function.
A weaker version of the following proposition was announced 
in \cite{AvecLesJeansFPSAC} (see Proposition 5.5 there).
\begin{proposition}
    \label{prop:Mult_Free_Sum}
    Let $B$ be a bipartite graph with vertex set $[n]$ and edge set $E_B$
    and consider {\em the} bipartition $[n]=V \sqcup W$ of its vertex set 
    so that $E_B \subseteq V \times W$ and $W$ contains no isolated vertex.
    Then
    \[ \GammaNC(B) = \sum \NN_{(\II,\JJ)},\]
    where the sum runs over set compositions $(\II,\JJ)$ such that:
    \begin{itemize}
        \item $\bigsqcup_{1 \le k \le r}  I_k = V$ and $\bigsqcup_{1\le k \le r} J_k = W$ ;
        \item $(x,y) \in E_B \implies m(x) \le m(y)$.
    \end{itemize}
\end{proposition}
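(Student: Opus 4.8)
The plan is to show that the set of $B$-nondecreasing functions $f\colon[n]\to\N$, whose monomials $a_{f(1)}\cdots a_{f(n)}$ make up $\GammaNC(B)$, is partitioned by the solution sets of the functions $\NN_{(\II,\JJ)}$ as $(\II,\JJ)$ ranges over the \emph{allowed} set compositions. Since each monomial would then occur in exactly one term on the right-hand side, this yields both the identity and its multiplicity-freeness.

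The easy inclusion is that every word occurring in an allowed $\NN_{(\II,\JJ)}$ occurs in $\GammaNC(B)$. Indeed, let $(x,y)\in E_B$, so that $x\in V$, $y\in W$ and, by the allowedness condition, $m(x)\le m(y)$. If $m(x)=m(y)$ then $x\in I_{m(x)}$ and $y\in J_{m(x)}$, and the first defining inequality of $\NN_{(\II,\JJ)}$ gives $k_x\le k_y$; if $m(x)<m(y)$, the remark following \eqref{eq:def_NIJ} gives $k_x<k_y$. In both cases $k_x\le k_y$, so every solution of the $\NN_{(\II,\JJ)}$ inequalities is $B$-nondecreasing.

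For the reverse inclusion I would attach to each $B$-nondecreasing $f$ a canonical set composition by a greedy \emph{peeling}. Having already removed $I_1,J_1,\dots,I_{m-1},J_{m-1}$, write $V^{(m)}$ and $W^{(m)}$ for the remaining elements of $V$ and $W$, and set
\[
    I_m=\bigl\{x\in V^{(m)} : f(x)\le f(y)\text{ for all }y\in W^{(m)}\bigr\},\qquad
    J_m=\bigl\{y\in W^{(m)} : f(y)<f(x)\text{ for all }x\in V^{(m)}\setminus I_m\bigr\},
\]
with the conventions $I_m=V^{(m)}$ if $W^{(m)}=\emptyset$ and $J_m=W^{(m)}$ if $V^{(m)}\setminus I_m=\emptyset$, stopping once $V$ is exhausted. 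I would then check that $f$ satisfies the inequalities defining $\NN_{(\II,\JJ)}$ (immediate from the two displayed conditions), that the composition is allowed (if an edge $(x,y)$ had $m(x)>m(y)$, then $y\in J_{m(y)}$ would force $f(y)<f(x)$, contradicting $f(x)\le f(y)$), and — crucially — that every part produced is nonempty, so that $(\II,\JJ)$ is a genuine set composition.

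This last point is the main obstacle, and it is exactly where the hypothesis that $W$ has no isolated vertex enters. Nonemptiness of $J_m$ (for $m$ not the last index) is easy: the minimizer of $f$ over $V^{(m+1)}$ lies outside $I_m$, hence strictly exceeds $\min_{W^{(m)}}f$, so the $W$-vertex attaining that minimum lands in $J_m$. For $I_m$, suppose it were empty while $V^{(m)}\neq\emptyset$, and pick $y^\ast\in W^{(m)}$ of minimal value. Every in-neighbour of $y^\ast$ has value $\le f(y^\ast)$, hence (since $I_m=\emptyset$) was peeled at an earlier stage; tracing the stage $m_0$ at which such an in-neighbour was placed in $I_{m_0}$, the fact that $y^\ast$ was \emph{not} placed in $J_{m_0}$ produces a still-unremoved $V$-vertex of value $\le f(y^\ast)$, and iterating yields a strictly increasing, hence finite, sequence of stages ending at an unremoved $V$-vertex of value $\le f(y^\ast)$ lying in $V^{(m)}$ — which contradicts $I_m=\emptyset$. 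Finally, for multiplicity-freeness I would show the peeled composition is the \emph{only} one whose $\NN$ contains $f$, arguing level by level: $I_1$ must be exactly the $x\in V$ with $f(x)\le\min_W f$ and $J_1$ exactly the $y\in W$ with $f(y)<\min_{V\setminus I_1}f$ (any other choice violates an $\NN$-inequality, via the first defining inequality and the strict inequality between consecutive levels), after which one recurses on $V^{(2)}\sqcup W^{(2)}$. Disjointness of the allowed solution sets follows, and together with the two inclusions this establishes the claimed multiplicity-free expansion.
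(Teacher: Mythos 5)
Your proof is correct, and at its core it uses the same iterative extraction of the blocks $I_1,J_1,I_2,\dots$ as the paper, but you package it quite differently. The paper first splits the set of $B$ non-decreasing functions according to the subset $D\subseteq (V\times W)\setminus E_B$ of non-edges on which $f$ strictly decreases, and then proves a separate, purely graph-theoretic statement (\cref{lem:Reversing_Edges}) about the reversed-edge graphs $K^D$: either $K^D$ has a directed cycle, in which case the corresponding block of functions is empty, or $K^D=H_{(\II,\JJ)}$ for a set composition $(\II,\JJ)$ determined by $D$, and this correspondence $D\leftrightarrow(\II,\JJ)$ is bijective on the acyclic side. Your greedy peeling applied to $f$ computes exactly the $(\II,\JJ)$ of that lemma for the orientation induced by $f$; because you start from an actual function, the directed-cycle alternative never arises, but in exchange you must prove directly that no part $I_m$ comes out empty, which you do via the strictly increasing chain of stages $m_0<m_1<\cdots$ (the paper obtains the analogous non-emptiness from ``every vertex has an incoming edge, hence there is a directed cycle,'' and then kills that case separately). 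Both arguments invoke the hypothesis that $W$ has no isolated vertex at the same spot, to produce an in-neighbour of the minimizing vertex $y^\ast$. What the paper's factorization buys is that multiplicity-freeness is immediate from the injectivity of $D\mapsto(\II,\JJ)$, and that the construction is available as a standalone lemma whose notation is reused in the proof of \cref{corol:ZBasis_Restr}; what your version buys is a shorter, self-contained argument, at the price of the level-by-level uniqueness verification you only sketch at the end (that sketch is correct and completes without difficulty).
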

\begin{proof}
    We denote $\ov{E_B}$ the set of {\em non-edges} of $B$,
    that is $(V \times W) \setminus E_B$.
    Consider a $B$ non-decreasing function $f: [n] \to \N$.
    For each non-edge $(x,y) \in \ov{E_B}$, one has either $f(x) \le f(y)$ or $f(y) < f(x)$.
    This trivial remark allows us to decompose
    \[\left\{ f : [n] \to \N, \text{ $f$ is $B$ non-decreasing}\right\} =
    \bigsqcup_{D \subseteq \ov{E_B}} \FFF_{D},\]
    where $\FFF_{D}$ is the set of $B$ non-decreasing functions that satisfy:
    \begin{itemize}
        \item $f(y) < f(x)$ for each $(x,y)$ in $D$;
        \item $f(x) \le f(y)$ for each $(x,y)$ in $\ov{E_B} \setminus D$.
    \end{itemize}
    This decomposition yields the formula
    \begin{equation}
        \GammaNC(B) = \sum_{D \subseteq \ov{E_B}} \NN_{D},
        \label{eq:NDec_GamB}
    \end{equation}
    where $\NN_{D}=\sum_{f \in \FFF_{D}} a_{f(1)} \cdots a_{f(n)}$.
    We will prove that, for each set $D$,
    the word quasi-symmetric function $\NN_{D}$
    is either $0$ or equal to one of the basis element $\NN_{(\II,\JJ)}$.\medskip

    Fix a subset $D$ of $\ov{E_B}$.
    Note that $\ov{E_B}$, and thus $D$, seen as a subset of $V\times W$,
    satisfies the hypothesis of \cref{lem:Reversing_Edges} as
    we assumed that $W$ contains no isolated vertex.
    Applying \cref{lem:Reversing_Edges}, we are left with two cases.
    \begin{itemize}
        \item Either the graph $K^D$ contains a directed cycle
          \[ (x_1,y_1,x_2,y_2, \cdots ,x_k,y_k),\]
          where $x_\ell$, respectively $y_\ell$, lies in $V$, respectively $W$ 
          (for $1 \le \ell \le k$). Then any function $f$ in $\FFF_{D}$ satisfies
          \[ f(x_1) \le f(y_1) < f(x_2) \le \cdots 
          < f(x_k) \le f(y_k) <f(x_1),\]
          which is clearly impossible.
          Thus $\FFF_{D}$ is empty and $\NN_{D}=0$.
      \item Or the graph $K^D$ is identical to some $H_{(\II,\JJ)}$ for some
          set composition $(\II,\JJ)$.
          In this case, functions $f$ in $\FFF_{D}$ fulfills by definition
          \[\begin{cases}
              f(x) \le f(y) &\text{ if } (x,y) \in (V \times W) \setminus D\text{, that is if }
              x \in I_m\text{ and } y \in J_{m'}\text{ with }m \le m';\\
              f(y) < f(x) &\text{ if } (x,y) \in D\text{, that is if }
              y \in J_m\text{ and } y \in I_{m'}\text{ with }m < m';\\
          \end{cases}\]
          These functions correspond to the lists $(k_1,\cdots,k_n)$
          in the summation index in the definition of $\NN_{(\II,\JJ)}$
          in \cref{eq:def_NIJ}.
          Therefore $\NN_{D}=\NN_{(\II,\JJ)}$.
    \end{itemize}

    It remains to prove that each set composition $(\II,\JJ)$ with the conditions
    given in the Proposition appears exactly once.
    This is a consequence of the second part of \cref{lem:Reversing_Edges}:
    there is a one-to-one correspondence between subset $D \subseteq V \times W$
    such that $K^D$ is acyclic and set compositions $(\II,\JJ)$
    with $\bigsqcup_{1 \le k \le r}  I_k = V$ and $\bigsqcup_{1\le k \le r} J_k = W$.
    In this correspondence, the fact that $D \subseteq \ov{E_B}$ translates as
        \[(x,y) \in E_B \implies m(x) \le m(y),\]
    which concludes the proof of the proposition.
\end{proof}
\begin{example}
    Consider the graph $B=B_{(\II_\ex,\JJ_\ex)}$ from \cref{ex:BIJ}.
    In this case $\ov{E_B}=\{(5,4),\ (3,4),\ (3,1),\ (3,7)\}$.
    It has 16 subsets $D$. Among these 16 sets $D$,
    exactly $3$ of them lead to a graph $K^D$ with a directed cycle:
    the one where $D$ contains $(5,4)$ but not $(3,4)$ and either $(3,1)$
    or $(3,7)$ or both.
    The other $13$ sets $D$ yield each a basis element $\NN_{(\II,\JJ)}$
    in the expansion of $\GammaNC(B_{(\II_\ex,\JJ_\ex)})$, which is:
    \begin{multline*}
        \GammaNC(B_{(\II_\ex,\JJ_\ex)})
        = \NN_{(26|5|3,4|17|)} 
        + \NN_{(26|5|3,4|1|7)} + \NN_{(26|5|3,4|7|1)} 
        +\NN_{(26|35,4|17)} \\ + \NN_{(236|5,4|17)}
        + \NN_{(256|3,147|)} 
        + \NN_{(256|3,14|7)}+ \NN_{(256|3,17|4)} \\ + \NN_{(256|3,47|1)}
        + \NN_{(256|3,1|47)}+ \NN_{(256|3,4|17)}+ \NN_{(256|3,7|14)}
        + \NN_{(2356,147)}
    \end{multline*}
    One can check that these $13$ set compositions are exactly the ones
    that fulfill the condition from \cref{prop:Mult_Free_Sum}.
\end{example}

\begin{corollary}
    \label{corol:ZBasis_Restr}
The family $(\GammaNC(B_{(\II,\JJ)}))$, when $(\II,\JJ)$ runs over all set compositions,
is a $\Z$-basis of $\WQSym$.    
\end{corollary}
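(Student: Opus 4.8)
The plan is to show that the transition matrix expressing the family $\big(\GammaNC(B_{(\II,\JJ)})\big)$ in the known $\Z$-basis $(\NN_{(\II,\JJ)})$ is unitriangular, in direct analogy with the way \cref{corol:GamG_Zbasis} was deduced from \cref{prop:GamG_on_Fbasis}. The essential input is \cref{prop:Mult_Free_Sum}, applied to $B=B_{(\II,\JJ)}$: it already expands $\GammaNC(B_{(\II,\JJ)})$ as a \emph{multiplicity-free} sum of basis elements $\NN_{(\II',\JJ')}$, so all that remains is to exhibit a statistic on set compositions for which $(\II,\JJ)$ is the unique leading term of its own expansion, with coefficient $1$.

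First I would fix the bipartition of $B_{(\II,\JJ)}$ needed in \cref{prop:Mult_Free_Sum}. Since $I_1\neq\emptyset$ and $I_1$ is joined to every nonempty part $J_k$, the canonical bipartition is simply $V=\bigsqcup_h I_h$ and $W=\bigsqcup_k J_k$, with no isolated vertex in $W$. Unwinding the proof of \cref{prop:Mult_Free_Sum}, each appearing term $\NN_{(\II',\JJ')}$ arises from a subset $D\subseteq\ov{E_B}$ with $K^D$ acyclic, through the bijection of \cref{lem:Reversing_Edges}; moreover $D$ is exactly the set of pairs $(v,w)\in V\times W$ whose blocks in $(\II',\JJ')$ are inverted, i.e. $m(v)>m(w)$ with $m$ the index function of $(\II',\JJ')$. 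Hence $|D|$ equals the inversion statistic
\[ \mathrm{inv}(\II',\JJ') := \#\{(v,w)\in V\times W : m(v)>m(w)\}. \]
The set composition $(\II,\JJ)$ we started from corresponds to $D=\ov{E_B}$, the full set of non-edges: indeed reversing \emph{every} non-edge turns $K_{V,W}$ into precisely $H_{(\II,\JJ)}$, which is acyclic, and $|\ov{E_B}|=\mathrm{inv}(\II,\JJ)$. As $\ov{E_B}$ is the largest admissible $D$, every other appearing term has $D\subsetneq\ov{E_B}$, and therefore $\mathrm{inv}(\II',\JJ')=|D|<|\ov{E_B}|=\mathrm{inv}(\II,\JJ)$.

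Consequently, ordering set compositions by the value of $\mathrm{inv}$ yields
\[ \GammaNC(B_{(\II,\JJ)}) = \NN_{(\II,\JJ)} + \sum_{\mathrm{inv}(\II',\JJ')<\mathrm{inv}(\II,\JJ)} \NN_{(\II',\JJ')}, \]
a unitriangular expansion with $1$'s on the diagonal. Such a matrix is invertible over $\Z$, so $\big(\GammaNC(B_{(\II,\JJ)})\big)$ is a $\Z$-basis of $\WQSym$, as claimed.

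The genuinely delicate point, and the one I would flag as the main obstacle, is choosing the right statistic. The naive candidates fail: neither the semi-length of $(\II,\JJ)$ nor its total number of parts is monotone along the expansion, since a single $\GammaNC(B_{(\II,\JJ)})$ can contain several distinct terms of the same semi-length and even strictly more parts than $(\II,\JJ)$ itself — this is already visible in the $13$-term expansion following \cref{prop:Mult_Free_Sum}. The inversion count $\mathrm{inv}$, equal to $|D|$, is what linearizes the combinatorial data of \cref{lem:Reversing_Edges}; once it is identified, the verification that $(\II,\JJ)$ is realized by the maximal choice $D=\ov{E_B}$ is immediate, and the triangularity follows at once.
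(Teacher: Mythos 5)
Your proof is correct, and it takes a genuinely different route to the triangularity than the paper does. Both arguments start from \cref{prop:Mult_Free_Sum} and the bijection in \cref{lem:Reversing_Edges}, and both reduce the corollary to exhibiting an order on set compositions for which the expansion of $\GammaNC(B_{(\II,\JJ)})$ on the $\NN$ basis is unitriangular. The paper orders set compositions by a lexicographic containment order $\preceq$ on the sequence $(I_1,\ov{J_1},I_2,\ov{J_2},\dots)$ and proves $(\II,\JJ)\preceq(\II',\JJ')$ for every appearing term via a two-stage case analysis (first $I_{m_0}\subseteq I'_{m_0}$, then $J_{m_0}\supseteq J'_{m_0}$), which yields a structural comparison between the indexing set compositions. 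You instead observe that each appearing term is tagged by its subset $D\subseteq\ov{E_B}$, that $|D|$ is intrinsically computable from $(\II',\JJ')$ as the inversion count $\#\{(v,w):m(v)>m(w)\}$, and that the diagonal term is precisely the one with $D=\ov{E_B}$, so every other term has strictly smaller statistic; this makes the strict decrease immediate from $D\subsetneq\ov{E_B}$ and avoids the case analysis entirely. The two key verifications you need --- that $K^{\ov{E_B}}=H_{(\II,\JJ)}$ is acyclic, and that distinct acyclic subsets $D$ give distinct set compositions (the second part of \cref{lem:Reversing_Edges}, which guarantees the diagonal coefficient is $1$ and that no off-diagonal term reuses $D=\ov{E_B}$) --- are both in place. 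Your argument is shorter and arguably more transparent; the paper's order carries more structural information about which $\NN_{(\II',\JJ')}$ can occur, but for the purpose of this corollary either order suffices.
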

\begin{proof}
We endow set compositions $(\II,\JJ)$ with the lexicographic containment order
on $(I_1,\ov{J_1},I_2,\ov{J_2},\dots)$ 
($\ov{J_m}$ denotes here the complement of $J_m$ in $W$) that is
\[(\II,\JJ) \preceq (\II',\JJ') \text{ if and only if }
\begin{cases}
    \phantom{\text{ or }(}I_1 \subsetneq I'_1 \\
    \text{ or }(I_1=I'_1 \text{ and } J_1 \supsetneq J'_1) \\
    \text{ or }(I_1=I'_1 \text{ and } J_1=J'_1 \text{ and } I_2 \subsetneq I'_2) \\
    \text{ or }\ldots
\end{cases}\]
We use in this proof the following notations:
for an element $x \in V$, we denote $m(x)$ (respectively $m'(x)$) the index $m$ (resp $m'$)
such that $x \in I_m$ (respectively $x \in I'_{m'}$).
The same notation will be used for $y \in W$,
except that $\II$ and $\II'$ should be replaced by $\JJ$ and $\JJ'$.
Besides, as in the proof of \cref{lem:Reversing_Edges}, we denote 
\begin{align*}
    V_m&=V  \setminus (I_1 \cup \dots \cup I_{m}); \\
    W_m&=W  \setminus (J_1 \cup \dots \cup J_{m}).
\end{align*}
Analogous notations will be used for $\II'$ and $\JJ'$.
We will prove that if $\NN_{(\II',\JJ')}$ appears 
in the expansion~\eqref{eq:NDec_GamB}
of $\GammaNC(B_{(\II,\JJ)})$, then $(\II,\JJ) \preceq (\II',\JJ')$.

Assume that $I_m=I'_m$ and $J_m=J'_m$ for all $m$ smaller than an integer $m_0 \ge 1$.
We shall prove that $I_{m_0} \subseteq I'_{m_0}$.
Assume $I_{m_0} \neq \emptyset$.
\begin{itemize}
    \item Either $J'_{m_0}$ is empty,
        which forces $I'_{m_0} = V_{m_0-1}'$
        (in particular, $(\II',\JJ')$ has the semi-length $m_0$).
        But as $I_m=I'_m$ for $m <m_0$, we have
        $V_{m_0-1}=V_{m_0-1}'$, so $I_{m_0} \subseteq I'_{m_0}$.
    \item Or $J'_{m_0}$ contains an element $y_0$. 
        As $J_m=J'_m$ for $m<m_0$, one has $W_{m_0-1}=W_{m_0-1}'$.
        Therefore $y_0$ belongs to $W_{m_0-1}$ and for any $x \in I_{m_0}$
        the pair $(x,y_0)$ is an edge of $B_{(\II,\JJ)}$,
        thus, from \cref{prop:Mult_Free_Sum}, one has $m'(x) \le m'(y_0)=m_0$.
        But elements $x$ in $I_{m_0}$ cannot belong to any of the $I'_m=I_m$
        with $m<m_0$,
        therefore we have $x \in I'_{m_0}$.
        We have proved that $I_{m_0} \subseteq I'_{m_0}$,
        which is what we wanted.
\end{itemize}

Fix a positive integer $m_0$ as before and 
assume that $I_m=I'_m$ and $J_m=J'_m$ for $m<m_0$ and $I_{m_0}=I'_{m_0}$.
We shall prove that $J_{m_0} \supseteq J'_{m_0}$.
Again, we consider two cases.
\begin{itemize}
    \item Either $I_{m_0+1}$ is not defined
        (because $(\II,\JJ)$ has semi-length $m_0$), which means that 
        $J_{m_0}=W_{m_0-1}$. But, the hypothesis $J_m=J'_m$ for $m<m_0$
        implies $W_{m_0-1}=W_{m_0-1}'$. Moreover, by definition,
        $J'_{m_0} \subseteq W_{m_0-1}'$ so that 
        $J_{m_0} \supseteq J'_{m_0}$.
    \item Or $I_{m_0+1}$ contains an element $x_0$.
        For each $y$ in $W_{m_0}$, the pair $(x_0,y)$ is an edge of $B_{(\II,\JJ)}$
        and thus, from \cref{prop:Mult_Free_Sum}, one has $m'(x_0) \le m'(y)$.
        But $m'(x_0)=m_0+1$.
        This implies $m'(y) \ge m_0+1$, that is $y \in W_{m_0}'$.
        We have proved that $W_{m_0} \subseteq W_{m_0}'$,
        which, together with $W_{m_0-1}=W_{m_0-1}'$,
        implies that $J_{m_0} \supseteq J'_{m_0}$,
        as wanted.
\end{itemize}

Finally, we have proved that, if $\NN_{(\II',\JJ')}$ appears
in the expansion~\eqref{eq:NDec_GamB}
of the function $\GammaNC(B_{(\II,\JJ)})$, then $(\II,\JJ) \preceq (\II',\JJ')$.
Note that, again from \cref{prop:Mult_Free_Sum},
the basis element $\NN_{(\II,\JJ)}$ appears in this expansion with coefficient $1$.
In other terms the matrix of the family  $(\GammaNC(B_{(\II,\JJ)}))$ in the $\Z$-basis
$\NN_{(\II,\JJ)}$ is unitriangular with respect to the order $\preceq$,
which proves that
$(\GammaNC(B_{(\II,\JJ)}))$ is also a $\Z$-basis of $\WQSym$.
\end{proof}

\subsection{A generating family of the quotient}
\label{sec:BIJ_Span_Quotient}

We will now show that $(B_{(\II,\JJ)})$, where $(\II,\JJ)$ runs over all set compositions,
 is a generating family in the quotient $\BG / \Bideal$.
 As explained in \cref{sec:Third_Main_Result}, 
 together with the results of \cref{sec:Zbasis_Res,sec:CIE_Rel},
this implies that the morphism $\BGammaNC:\BG / \Bideal \to \WQSym$ is an isomorphism.

As in the non-restricted setting, the result follows from a combinatorial lemma
(which is surprisingly simpler than in the non-restricted setting).

\begin{lemma}
    Let $B$ be a bipartite graph on  vertex set $[n]$. Then
    \begin{itemize}
        \item either $B= B_{(\II,\JJ)}$ for some set composition $(\II,\JJ)$;
        \item or $B$ can be written as linear combination of graphs with the 
            same vertex set and more edges in $\BG / \Bideal$.
    \end{itemize}
    \label{LemBiggerGraphsBipartite}
\end{lemma}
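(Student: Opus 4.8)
The plan is to mimic the structure of the non-restricted proof (\cref{lem:IncreasingSize}), but exploiting the bipartite constraint to simplify the case analysis. We work with a bipartite graph $B$ on $[n]=V \sqcup W$ with $E_B \subseteq V \times W$. The goal is a dichotomy: either $B$ is one of the special graphs $B_{(\II,\JJ)}$, or modulo cyclic inclusion-exclusion relations we can rewrite $B$ as a combination of bipartite graphs with strictly more edges. Since the number of edges of a bipartite graph on $V \sqcup W$ is bounded by $|V|\cdot|W|$, iterating this dichotomy terminates and shows that the $B_{(\II,\JJ)}$ generate $\BG/\Bideal$.

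First I would introduce, as in the non-restricted case, the relation $x \sim y$ meaning there is no directed path between $x$ and $y$ in $B$; here the bipartite structure is essential, since every path in $B$ has length at most one and alternates sides, so $x \sim y$ fails precisely when $(x,y) \in E_B$ or $(y,x) \in E_B$ (with $x,y$ on opposite sides). This already forces transitivity to be automatic in a weak sense, so I expect to \emph{not} need a separate ``transitivity'' case like Case~1 of \cref{lem:IncreasingSize}. The main structural reduction is an analog of Case~2: if there exist $x \in V$, $y \in W$ and a third vertex $z$ witnessing a failure of the expected layered structure — concretely, if $\sim$ restricted to the appropriate induced configuration is not an equivalence relation, so we can find $x$, $z$ with a path from $x$ to $z$ together with some $y$ satisfying $x \sim y$ and $y \sim z$ — then I would build a graph $G_0$ by adding the two edges needed to create a $4$-cycle (or a longer even cycle) in the undirected sense, with $C^+$ consisting of exactly the two newly added edges. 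The corresponding $\CIE_{G_0,C}$ relation expresses $B$ (the term with both edges deleted) as $-G_0$ plus the two intermediate graphs, each of which has more edges than $B$, giving the desired reduction in $\BG/\Bideal$. I must check that $G_0$ and the intermediate graphs are genuinely bipartite and acyclic, which the bipartite orientation constraint makes straightforward.

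The remaining case is that no such reduction is available: then I claim $B$ must equal $B_{(\II,\JJ)}$ for a suitable set composition. As in Case~3 of \cref{lem:IncreasingSize}, I would argue that $\sim$ is then an equivalence relation and that between any two classes the edge set behaves coherently, but the bipartite setting lets me leverage \cref{lem:Reversing_Edges} directly: the complete bipartite graph $K_{V,W}$ with a chosen set $D$ of reversed edges is acyclic exactly when it equals some $H_{(\II,\JJ)}$, and one sees that the ``maximal'' acyclic bipartite graphs compatible with $B$'s orientation constraint are exactly the $B_{(\II,\JJ)}$. Concretely, I would peel off $I_1$ as the set of $V$-vertices that are sources, then $J_1$ as the $W$-vertices receiving edges only from $I_1$, and iterate, just as in the proof of \cref{lem:Reversing_Edges}; the non-reducibility hypothesis guarantees that at each stage the relevant sets are well-defined and the induced edge sets are complete, forcing $B = B_{(\II,\JJ)}$.

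The hard part, and where I would spend the most care, is verifying that the ``no reduction possible'' hypothesis genuinely yields the clean layered structure of $B_{(\II,\JJ)}$ rather than some intermediate irregular configuration — that is, proving the third case is exhaustive. The bipartite constraint is a double-edged sword: it rules out long internal paths (simplifying the cycle construction) but it also means the cycles $C$ available for inclusion-exclusion must alternate between $V$ and $W$, so I must ensure that whenever $B \neq B_{(\II,\JJ)}$ there really is an alternating cycle of the right shape with $|C^+|=2$ available after adding at most two edges. I anticipate the cleanest route is to phrase the whole argument through \cref{lem:Reversing_Edges} and the peeling construction, showing that any obstruction to the peeling (a $W$-vertex with incoming edges from two incomparable $V$-layers, or a $V$-vertex straddling layers) produces exactly the forbidden alternating $4$-cycle, which the $\CIE$ relation then removes; this simultaneously proves the dichotomy and keeps the combinatorics markedly simpler than in the non-restricted case, as the lemma's statement promises.
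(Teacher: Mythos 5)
Your reduction step does not survive the transfer to the bipartite setting. In Case~2 of \cref{lem:IncreasingSize} the graph $G_0$ is built by adding the edges $(x,y)$ and $(y,z)$; in the configuration you describe the middle vertex $y$ lies on the same side of the bipartition as $z$ (you take $x\in V$ and $y,z\in W$, with $(x,z)$ an edge), so the edge $(y,z)$ you would need to add joins two vertices of $W$ and the resulting graph is no longer bipartite --- it leaves $\BG$, and the resulting $\CIE$ element is not an element of $\Bideal$. This is precisely the subtlety flagged in \cref{rem:why_non_trivial}: one must exhibit cyclic inclusion-exclusion elements that live \emph{inside} $\BG$. Moreover, the trigger you propose (failure of $\sim$ to be an equivalence relation) is not the right dichotomy here: since every directed path in a bipartite graph has length one, $\sim$ already fails transitivity on $B_{(\II,\JJ)}$ as soon as there are two layers (take $x\in I_1$, $y\in I_2$, $z\in J_1$: then $x\sim y$, $y\sim z$, but $(x,z)$ is an edge), so your ``second case'' fires on graphs that are already of the target form and to which no edge can be added without destroying that form.

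The paper's proof rests on a different, four-vertex obstruction: the dichotomy is whether the neighbourhoods $\NNN(v)\subseteq W$ of the vertices $v\in V$ form a chain under inclusion. If they do, grouping the $V$-vertices by common neighbourhood into $I_1,\dots,I_r$ with $\NNN(I_1)\supsetneq\cdots\supsetneq\NNN(I_r)$ and setting $J_k=\NNN(I_k)\setminus\NNN(I_{k+1})$ exhibits $B$ as $B_{(\II,\JJ)}$ directly, with no appeal to \cref{lem:Reversing_Edges} or any peeling argument. If they do not, there exist $v,v'\in V$ and $w,w'\in W$ with $(v,w),(v',w')\in E_B$ but $(v,w'),(v',w)\notin E_B$; adding the two missing edges keeps the graph bipartite (both go from $V$ to $W$), and the undirected $4$-cycle $v\to w'\to v'\to w\to v$ has $C^+=\{(v,w'),(v',w)\}$, so the corresponding $\CIE$ relation rewrites $B$ as a combination of strictly larger bipartite graphs. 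Your closing paragraph gestures at ``incomparable layers'' and an alternating $4$-cycle, which is the right picture, but the argument you actually set up --- the triple $x,y,z$ governed by $\sim$ --- cannot be completed to that $4$-cycle, so as written the proof has a hole at the reduction step.
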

\begin{proof}
    Let $[n]=V \sqcup W$ the bipartition of the vertices of $B$.
    For $v \in V$, we denote $\NNN(v)$ the subset of $W$
    of vertices linked to $v$.

    First case: let us suppose that for all $v$ and $v'$ in $V$, we have
    either $\NNN(v) \subseteq \NNN(v')$ or $\NNN(v') \subseteq \NNN(v)$.
    Then one can label the vertices in $V$ by $\{v_1,\dots,v_s\}$
    such that
    \[\NNN(v_1) \supseteq \NNN(v_2) \dots \supseteq \NNN(v_s).\]
    We group together vertices $v_i$ which have the same neighbourhood $\NNN(v_i)$.
    This gives a set composition $(I_1,\dots,I_r)$ of $V$ such that
    \[\NNN(I_1) \supsetneq \NNN(I_2) \dots \supsetneq \NNN(I_r),\]
    where $\NNN(I_m)$ denotes the common value of $\NNN(v)$ for $v \in I_m$.
    Then we define $J_k= \NNN(I_k) \setminus \NNN(I_{k+1})$ for $k<r$
    (these sets are nonempty by definition)
    and $J_r=\NNN(I_r)$ so that, for all $m \le r$,
    \[ \NNN(I_m) =\bigsqcup_{k \geq m} J_k. \]
    This equation precisely says that $B$ is the graph $B_{(\II,\JJ)}$.

    We consider now the second case: there exist $v,v'$ in $V$ and $w,w'$
    in $W$ such that $(v,w)$ and $(v',w')$ belong to edge-set $E_B$ but
    neither $(v,w')$ nor $(v',w)$.
    Let $B_0$ be the graph obtained from $B$ by adding edges from $v$ to $w'$
    and from $v'$ to $w$ (note that it is still bipartite as a directed graph,
    and hence is acyclic).
    The undirected version of this graph
    contains a cycle $C:v \to w' \to v' \to w \to v$,
    whose corresponding set $C^+$ is $\{(v,w'),\ (v',w)\}$ (with the notations of \cref{sub:DefCIE}).
    Then $B=B_0 \setminus C^+$ is the smallest graph appearing in $\CIE_{B_0,C}$
    and thus, in the quotient,  $\BG / \ideal_B$, the graph $B$ can be written as
    a linear combination of bigger graphs ({\it i.e.} with the same set of
    vertices and more edges).
\end{proof}
Let $B$ be a bipartite directed graph with vertex set $[n]$.
Iterating Lemma~\ref{LemBiggerGraphsBipartite},
one can write $B$ as an integral linear combination of $B_{(\II,\JJ)}$
in the quotient space $\BG / \Bideal$.
So $(B_{(\II,\JJ)})$,
where $(\II,\JJ)$ runs over all set compositions
is a generating family for $\BG / \Bideal$.

%%%%%%%%%%%%%%%%%%%%%
%%%%%%%%%%%%%%%%%%%%
\subsection{Third main result}
\label{sec:Third_Main_Result}

We are now ready to prove the following statement.
\begin{theorem}
The space $\Bideal$, spanned by cyclic-inclusion elements,
is the kernel of the surjective morphism $\BGammaNC$
from $\BG$ to $\WQSym$.
    \label{thm:NonCom_Bip}
\end{theorem}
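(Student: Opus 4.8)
The plan is to mirror exactly the structure already used to prove \cref{thm:NonCom_NonRestr}, now working inside the bipartite graph algebra $\BG$ rather than the full graph algebra $\G$. The proof is a clean linear-algebra argument that combines three facts already established in the excerpt. First, by \cref{prop:CIE_in_Kernel} (applied to cycles lying in $\BG$), the subspace $\Bideal$ is contained in the kernel of $\BGammaNC$. Second, from \cref{corol:ZBasis_Restr}, the family $\big(\BGammaNC(B_{(\II,\JJ)})\big)$, as $(\II,\JJ)$ ranges over all set compositions, is a $\Z$-basis of $\WQSym$. Third, from the generating result established in \cref{sec:BIJ_Span_Quotient} (the consequence of \cref{LemBiggerGraphsBipartite}), the family $(B_{(\II,\JJ)})$ spans the quotient $\BG / \Bideal$.

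First I would denote by $\BKer$ the kernel of $\BGammaNC$ and note that $\Bideal \subseteq \BKer$. Since the images $\BGammaNC(B_{(\II,\JJ)})$ are linearly independent in $\WQSym$, the graphs $(B_{(\II,\JJ)})$ are linearly independent in the quotient $\BG / \BKer$, and \emph{a fortiori} linearly independent in $\BG / \Bideal$ (because the latter quotient surjects onto the former). Combining this independence with the fact that $(B_{(\II,\JJ)})$ spans $\BG / \Bideal$, I conclude that $(B_{(\II,\JJ)})$ is in fact a basis of $\BG / \Bideal$. Then $\BGammaNC$ descends to a map $\BG / \Bideal \to \WQSym$ that sends this basis to the basis $\big(\BGammaNC(B_{(\II,\JJ)})\big)$ of $\WQSym$, hence is an isomorphism. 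An isomorphism has trivial kernel, so the two surjections $\BG \twoheadrightarrow \BG/\Bideal$ and $\BG \twoheadrightarrow \BG/\BKer$ must agree, giving $\Bideal = \BKer$, together with surjectivity of $\BGammaNC$.

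The key subtlety to flag is the one highlighted in \cref{rem:why_non_trivial}: it is \emph{not} automatic that $\BKer$, which equals $\ideal \cap \BG$ by \cref{thm:NonCom_NonRestr}, is spanned by the cyclic inclusion-exclusion elements that happen to lie in $\BG$. This is precisely why the bipartite case cannot be deduced formally from the non-restricted case and must be proved independently. The real work of the argument is therefore not in this final assembly but in the ingredients already secured — particularly in \cref{LemBiggerGraphsBipartite}, whose proof must reduce every bipartite graph to the $B_{(\II,\JJ)}$ using \emph{only} cyclic inclusion-exclusion relations built from edges internal to $\BG$, and in \cref{corol:ZBasis_Restr}, which supplies the crucial linear independence. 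Given those, the concluding step is the same short and formal linear-algebra wrap-up as in the proof of \cref{thm:NonCom_NonRestr}: two spanning/independence facts about the same family force it to be a basis, and an isomorphism carrying basis to basis pins down the kernel exactly.
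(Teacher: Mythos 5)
Your proposal is correct and follows exactly the same route as the paper's own proof: contain $\Bideal$ in the kernel via \cref{prop:CIE_in_Kernel}, use the spanning result of \cref{sec:BIJ_Span_Quotient} and the linear independence from \cref{corol:ZBasis_Restr} to conclude that $(B_{(\II,\JJ)})$ is a basis of $\BG/\Bideal$ mapped by $\BGammaNC$ onto a basis of $\WQSym$. Your additional remarks correctly identify where the genuine work lies (in \cref{LemBiggerGraphsBipartite} and \cref{corol:ZBasis_Restr}) and why the bipartite case is not a formal consequence of \cref{thm:NonCom_NonRestr}.
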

\begin{proof}
    The proof is completely similar to that of \cref{thm:NonCom_NonRestr}.

    Denote $\BKer$ the kernel of $\BGammaNC$. 
    By \cref{prop:CIE_in_Kernel}, it contains $\Bideal$.
    On the one hand (\cref{sec:BIJ_Span_Quotient}),
    we know that $\BG / \Bideal$
    is spanned by the family $(B_{(\II,\JJ)})$.
    On the other hand (\cref{corol:ZBasis_Restr}),
    the family $\GammaNC\left( B_{(\II,\JJ)} \right)$ is a basis of $\WQSym$,
    which implies in particular that the $(B_{(\II,\JJ)})$ are linearly independent
    in $\BG/\BKer$ and hence in $\BG / \Bideal$.

    Therefore $(B_{(\II,\JJ)})$ is a basis of $\BG / \Bideal$
    and $\BGammaNC$ is an isomorphism from $\BG / \Bideal$ to $\WQSym$ 
    (it sends a basis on a basis), which concludes the proof.
\end{proof}

\subsection{Unlabeled commutative framework and fourth main result}
We will use the following obvious notations for the commutative bipartite framework:
$\ov{\BG}$ is the subspace of $\ov{\G}$ spanned by unlabeled bipartite graph
and $\BGamma$ is the restriction of $\Gamma$ to $\ov{\BG}$.

Moreover, we denote $\ov{\Bideal}$ the space spanned by $\CIE_{\ov{G},\ov{C}}$,
where $\ov{G}$ runs over unlabeled bipartite directed graphs
and $\ov{C}$ over cycles in the undirected version of $\ov{G}$.
Equivalently, $\ov{\Bideal}$ is the image of $\Bideal$ by $\phi_u$.

\begin{theorem}
    The ideal $\ov{\Bideal}$, spanned by inclusion-exclusion elements,
is the kernel of the surjective morphism $\BGamma$
from $\ov{\BG}$ to $\QSym$.
    \label{thm:Com_Bip}
\end{theorem}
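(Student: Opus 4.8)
The plan is to deduce \cref{thm:Com_Bip} from its noncommutative counterpart \cref{thm:NonCom_Bip} in exactly the same way that \cref{thm:Com_NonRestr} was obtained from \cref{thm:NonCom_NonRestr}. The crucial structural input is that the symmetric group action on $\G$ (on each homogeneous component $\G_d$) restricts to the bipartite subalgebra $\BG$: if $B$ is bipartite with bipartition $V \sqcup W$ and $\sigma \in S_n$, then $\sigma(B)$ is again bipartite, with bipartition $\sigma(V) \sqcup \sigma(W)$, since its edges lie in $\sigma(V) \times \sigma(W)$. Consequently the unlabeled bipartite space $\ov{\BG}$ is precisely the quotient $\BG / \langle x - \sigma.x \rangle$, and the morphism $\BGammaNC$, being the restriction of $\GammaNC$, inherits the compatibility with this action established in \cref{sec:WQSym,sec:graphs}.

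First I would record that $\ov{\Bideal}=\phi_u(\Bideal)$ by definition, and that the action carries a cyclic inclusion-exclusion element to another one, namely $\sigma(\CIE_{B,C})=\CIE_{\sigma(B),\sigma(C)}$, so that $\Bideal$ is an $S_n$-stable subspace of $\BG$. This stability is exactly what lets the two quotients — by the CIE relations and by the $S_n$-action — be performed in either order. Concretely, I would then write the chain of isomorphisms
\begin{multline*}
\ov{\BG} / \langle \ov{\CIE}_{\ov{B},\ov{C}} \rangle
\simeq \big( \BG / \langle x - \sigma.x \rangle \big) / \langle \ov{\CIE_{B,C}} \rangle
\simeq \BG / \langle x - \sigma.x,\ \CIE_{B,C} \rangle \\
\simeq \big( \BG / \langle \CIE_{B,C} \rangle \big) / \langle x - \sigma.x \rangle
\simeq \WQSym / \langle x - \sigma.x \rangle
\simeq \QSym,
\end{multline*}
where the penultimate isomorphism is \cref{thm:NonCom_Bip} and the last is the presentation of $\QSym$ as the quotient of $\WQSym$ by the $S_n$-action recalled in \cref{sec:WQSym}. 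Tracing through these identifications shows that the induced composite is exactly $\BGamma$, so that $\ker(\BGamma) = \ov{\Bideal}$; surjectivity of $\BGamma$ is inherited from the surjectivity of $\BGammaNC$ (\cref{thm:NonCom_Bip}) composed with the surjection $\comm: \WQSym \to \QSym$.

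The step I expect to require the most care is the commutation of the two quotients, i.e. the middle isomorphisms in the display. Since $\Bideal \subseteq \BG$ is $S_n$-stable, quotienting $\BG$ first by the span of the $\CIE_{B,C}$ and then by the relations $x - \sigma.x$ yields the same space as taking these quotients in the opposite order; this is a formal fact about quotients of a representation by an invariant subspace, but one must verify that the span of $\{\ov{\CIE_{B,C}}\}$ in $\ov{\BG}$ coincides with the image $\phi_u(\Bideal)=\ov{\Bideal}$, which is the equality noted just before the theorem. Once this bookkeeping is in place, no genuinely new combinatorial argument beyond \cref{thm:NonCom_Bip} is needed — in contrast with the bipartite noncommutative case which, as emphasized in \cref{rem:why_non_trivial}, could not simply be read off from the non-restricted result and required the separate arguments of \cref{sec:Zbasis_Res,sec:BIJ_Span_Quotient}.
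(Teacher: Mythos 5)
Your proposal is correct and follows essentially the same route as the paper, which proves \cref{thm:Com_Bip} by repeating the quotient-commutation argument of \cref{thm:Com_NonRestr} with \cref{thm:NonCom_Bip} in place of \cref{thm:NonCom_NonRestr}. The extra details you supply (the $S_n$-stability of $\BG$ and of $\Bideal$, and the identification $\ov{\Bideal}=\phi_u(\Bideal)$) are exactly the bookkeeping the paper leaves implicit.
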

\begin{proof}
    The proof is identical to that of \cref{thm:Com_NonRestr},
    using \cref{thm:NonCom_Bip} instead of \cref{thm:NonCom_NonRestr}.
\end{proof}

\section{Application of the main result to Kerov character polynomials}
\label{sect:Kerov}

In this section, we present our application of Theorem \cref{thm:Com_Bip}
to the theory of Kerov character polynomials.
We do not obtain new results, but are able to significantly
simplify some existing proofs.

\subsection{A family of invariant functionals}
We start by defining combinatorially a family of linear functions
$I_\nu : \BG \to \C$ indexed by integer partitions\footnote{As usual, an {\em integer
partition} is a non-increasing list of positive integers.},
whose kernels contain inclusion-exclusion elements.

\begin{definition}
    \begin{itemize}
        \item A {\em decorated bipartite graph} is a pair $(B,h)$
            where $B$ is a graph with vertex set bipartition $V \sqcup W$
            and $h$ a function $V \to \{1,2,\cdots\}$ such that
            \[\sum_{v \in V} h(v) = |W|.\]
        \item A connected decorated bipartite graph is said to be {\em expander}
            if, for any non-empty proper subset $U$ of $V$ (that is $U \neq \emptyset, V$),
            \[ |\NNN(U)| > \sum_{u \in U} h(u), \]
            where $\NNN(U)$ is the neighbourhood of $U$,
            {\em i.e.} the set of vertices of $W$ having at least one neighbour in $U$.
        \item A decorated bipartite graph is said to be {\em expander}
            if all its connected components are
            (in particular, if $V \sqcup W$ is the vertex set
            of a connected component, then $\sum_{v \in V} h(v)=|W|$).
        \item The {\em type} of a decorated bipartite graph is the integer partition
            obtained by sorting the multiset $h(V)$ in non-increasing order.
    \end{itemize}
\end{definition}

\begin{example}
    Consider the bipartite graph $B$ of \cref{fig:ex_expander} (without the dashed edge)
    and let $h$ be given by $h(5)=1$, $h(3)=2$ and $h(8)=3$.
    Then $(B,h)$ is a decorated bipartite graph of type $(3,2,1)$.
    It is {\em not} expander as the neighbourhood of $\{3,5\}$ has size $3$ while $h(3)+h(5)=3$
    (notice the strong inequality in the definition of expander).
    If we add the dashed edge, we get an expander graph.
\end{example}
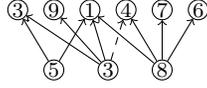
\begin{figure}
    \[  \begin{tikzpicture}[font=\scriptsize,scale=.32]
        \tikzstyle{vertex}=[circle,draw=black,inner sep=.3pt];
        \node[vertex] (v5) at (0,0) {$5$};
        \node[vertex] (v3) at (2.25,0) {$3$};
        \node[vertex] (v8) at (4.5,0) {$8$};
        \node[vertex] (v2) at (-1.5,2.5) {$3$};
        \node[vertex] (v9) at (0,2.5) {$9$};
        \node[vertex] (v1) at (1.5,2.5) {$1$};
        \node[vertex] (v4) at (3,2.5) {$4$};
        \node[vertex] (v7) at (4.5,2.5) {$7$};
        \node[vertex] (v6) at (6,2.5) {$6$};
        \draw[->] (v5) -- (v2);
        \draw[->] (v5) -- (v1);
        \draw[->] (v3) -- (v2);
        \draw[->] (v3) -- (v9);
        \draw[->] (v3) -- (v1);
        \draw[dashed,->] (v3) -- (v4);
        \draw[->] (v8) -- (v1);
        \draw[->] (v8) -- (v4);
        \draw[->] (v8) -- (v7);
        \draw[->] (v8) -- (v6);
    \end{tikzpicture} \]
    \caption{Example of non-expander (without the dashed edge) and expander (with the dashed edge) graphs.}
    \label{fig:ex_expander}
\end{figure}
\begin{remark}
    There are many variants of the definition of {\em expander graphs} in the literature.
    The one given here is a generalization of having {\em left-vertex expansion ratio}
    at least $h$ (for a given integer $h$), see \cite[Definition 12.7]{Expanders}.
    Expander graphs have found a lot of applications in analysis of communication networks,
    in the theory of error correcting codes and in the theory of pseudorandomness:
    we refer to \cite{Expanders} for a survey article.
    However, the way they appear here seems very different to what is usually done in the
    literature.
\end{remark}
Expander graphs are known to encode some kind of strong connectivity of the graphs.
In particular, trees (here, a tree is connected graph whose undirected version does not contain cycles)
are not expanders (except for trivial cases),
which is stated in the following lemma.

\begin{lemma}
    Let $B$ be tree with vertex set bipartition $V \sqcup W$ and $h:V \to \{1,2,\cdots\}$.
    Then $(B,h)$ is expander if and only if every connected component of $B$ contains exactly one
    vertex in $V$ and $h$ associates to each vertex in $V$ its number of neighbours.
    \label{LemForestsExpanders}
\end{lemma}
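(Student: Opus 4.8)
The plan is to reduce the statement to a single connected component, and then to pit the strict expander inequalities for singletons against the elementary edge count of a tree. Since $(B,h)$ is expander exactly when each of its connected components is, and since the characterization to be proved is itself component-wise, I would first observe that it suffices to treat one connected tree $T$ with vertex bipartition $V' \sqcup W'$ and to show that $T$ is expander if and only if $|V'|=1$ and $h(v)=\deg(v)$ for the unique vertex $v\in V'$. Writing $p=|V'|$ and $q=|W'|$, I would record the key identity: a tree on $p+q$ vertices has $p+q-1$ edges, and as every edge joins $V'$ to $W'$, this gives $\sum_{v\in V'}\deg(v)=p+q-1$.

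For the direction $\Leftarrow$, I would take $V'=\{v\}$ with $h(v)=\deg(v)$. Every edge is then incident to $v$, so connectedness forces $v$ to be adjacent to each vertex of $W'$; thus $T$ is a star with $\deg(v)=q$. Since $V'$ has no non-empty proper subset, the expander inequality is vacuously satisfied, and the decoration condition $h(v)=q=|W'|$ holds, so $T$ is expander.

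For the main direction $\Rightarrow$, I would assume $T$ is expander; being expander as a decorated bipartite graph, it satisfies $\sum_{v\in V'}h(v)=q$. I would then argue by contradiction, assuming $p\ge 2$. In that regime every singleton $\{v\}$ is a non-empty proper subset of $V'$, so the expander inequality applied to $\{v\}$ reads $\deg(v)=|\NNN(\{v\})|>h(v)$, giving $\deg(v)\ge h(v)+1$ for each $v\in V'$. Summing over $V'$ yields $\sum_{v\in V'}\deg(v)\ge\sum_{v\in V'}h(v)+p=q+p$, which clashes with the tree identity $\sum_{v\in V'}\deg(v)=p+q-1$. Hence $p=1$, and then $\sum_{v\in V'}h(v)=q$ forces $h(v)=q=\deg(v)$, which is exactly the desired conclusion.

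I do not expect a serious obstacle here; the argument is short once the two counting identities are in place. The one point that needs care is that the strict inequality $\deg(v)>h(v)$ is only licensed when $\{v\}$ is a \emph{proper} subset of $V'$, i.e.\ when $p\ge 2$ --- precisely the case in which it collides with the edge count $\sum_{v\in V'}\deg(v)=p+q-1$. The conceptual crux is therefore recognizing that the strictness of the expander condition, summed against this tree identity, leaves exactly one unit of slack too little, ruling out $p\ge 2$.
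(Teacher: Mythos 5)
Your proof is correct, and it takes a slightly different route from the paper's. Both arguments are contradictions that exploit the one unit of slack a tree leaves between the strict expander inequalities and a counting identity, but the objects counted differ. You apply the expander inequality to every \emph{singleton} $\{v\}\subseteq V'$ (legitimate once $|V'|\ge 2$) and play $\sum_{v\in V'}\deg(v)\ge q+p$ against the edge count $p+q-1$ of a connected tree. The paper instead picks a vertex $w\in W$ of degree at least $2$, deletes it to split $V$ into blocks $V_1,\dots,V_r$ with $r\ge 2$, applies the expander inequality to each (possibly large) block, and compares $\sum_i|\NNN(V_i)|=|W|+(r-1)$ with the lower bound $\ge |W|+r$. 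Your version is arguably a bit more economical, since the handshake/edge-count identity for trees is more standard than the inclusion--exclusion count of the neighbourhoods $\NNN(V_i)$, and you also spell out the converse direction, which the paper dismisses as ``follows easily.'' The only cosmetic gap is that your contradiction rules out $p\ge 2$ but not $p=0$ (an isolated $W$-vertex as its own component); that degenerate case is excluded directly by the decoration condition $\sum_{v\in V'}h(v)=|W'|$, which would read $0=1$, and the paper glosses over it in exactly the same way.
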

\begin{proof}
    It is enough to prove that $(B,h)$ can not be expander unless 
    $B$ has one vertex of $V$ per connected component or, equivalently,
    unless all vertices in $W$ have degree $1$.
    The remaining part of the lemma then follows easily.

    Let us do a proof by contradiction and 
    assume there is a vertex $w$ of $W$ of degree at least $2$. 
    Without loss of generality, we may assume that $B$ is connected.
    As $B$ is a tree, if we remove $w$, the graph obtained from $B$ has several connected components:
    denote $V_1$, \ldots, $V_r$ the intersections of $V$ with these connected components ($r \ge 2$).
    
    The union of the neighbourhoods $\NNN(V_1)$, \ldots, $\NNN(V_r)$ is clearly $W$,
    while two sets in this list have only $w$ in common,
    so that
    \[\sum_{i=1}^r |\NNN(V_i)| = |W| + (r-1). \]
    But, by hypothesis,
    \[ \sum_{i=1}^r \left( \sum_{v \in V_i} h(v) \right)=\sum_{v \in V} h(v)=|W|\]
    which is incompatible with the strict inequalities
    ($V_1$, \ldots, $V_r$ are non-empty by definition and proper subsets of $V$ because $r \ge 2$):
    \[\text{for every $i$ in $\{1,\cdots,r\}$, }\ |\NNN(V_i)|>\sum_{v \in V_i} h(v).\qedhere\]
\end{proof}

We can now define the functions $I_\nu$.

\begin{definition}
    Let $\nu$ be an integer partition and $B$ a bipartite graphs with $\c$ connected components.
    Then $(-1)^\c I_\nu(B)$ is, by definition, the number of functions $h: V \to \{1,2,\cdots\}$
    such that $(B,h)$ is an expander decorated bipartite graph of type $\nu$.

    The function $I_\nu$ is then extended by linearity to the bipartite graph algebra $\BG$.
\end{definition}

\begin{proposition}
    For any bipartite graph $B$ and cycle $C$ of $B$, one has:
    \[I_\nu(\CIE_{B,C})=0.\]
    \label{prop:Iinv}
\end{proposition}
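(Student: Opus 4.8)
The plan is to follow the template of \cref{prop:CIE_in_Kernel}. Since $I_\nu$ is linear and $\CIE_{B,C}=\sum_{D\subseteq C^+}(-1)^{|D|}(B\setminus D)$, I would first unfold the definition of $I_\nu$ into
\[
I_\nu(\CIE_{B,C})=\sum_{D\subseteq C^+}(-1)^{|D|}\,(-1)^{\c(B\setminus D)}\,\#\{h:(B\setminus D,h)\text{ expander of type }\nu\}.
\]
Deleting edges never changes the vertex set, so the bipartition $V\sqcup W$, and hence the set of candidate decorations $h\colon V\to\N$ of type $\nu$, is the \emph{same} for every $B\setminus D$. I would therefore interchange the two sums and reduce the statement to showing, for each fixed $h$ of type $\nu$, that
\[
S_h:=\sum_{D\subseteq C^+}(-1)^{|D|}\,(-1)^{\c(B\setminus D)}\,\big[(B\setminus D,h)\text{ expander}\big]=0.
\]

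Here the analogy with \cref{prop:CIE_in_Kernel} breaks down in one crucial way. There the indicator ``$f$ is $(G\setminus D)$-non-decreasing'' is \emph{monotone} in $D$, so the inner alternating sum telescopes and a forced ``down-edge'' forbids the only surviving term $D=C^+$. The expander indicator is \emph{not} monotone under edge deletion: removing an edge of $C^+$ shrinks a neighbourhood $\NNN(U)$, which tends to destroy a strict expansion inequality, but it can also disconnect the graph, after which the per-component tightness $\sum_{v}h(v)=|W_K|$ together with the vacuous expansion of singleton $V$-parts on the new components $K$ may \emph{restore} the expander property. Consequently the surviving terms of $S_h$ live on genuinely different connectivity patterns of $B\setminus D$, and the cancellation must be produced between such patterns, with the component sign $(-1)^{\c}$ doing the bookkeeping. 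This is already visible in the four-cycle construction in the proof of \cref{LemBiggerGraphsBipartite}: for the unique admissible decoration the terms $D=\emptyset$ and $D=C^+$ carry signs $(-1)^{0}(-1)^{1}$ and $(-1)^{2}(-1)^{2}$ and cancel, while the two intermediate graphs are not expander.

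To run the cancellation I would produce a sign-reversing involution on the set of $D\subseteq C^+$ with $(B\setminus D,h)$ expander that reverses the parity of $|D|+\c(B\setminus D)$. The natural device is the lattice of \emph{tight} subsets $U\subseteq V$, those with $|\NNN_{B\setminus D}(U)|=\sum_{u\in U}h(u)$: expansion says that no proper nonempty $U$ inside a component is tight, so walking along the cycle $C$ one can single out a canonical edge (or run of $C^+$-edges) whose deletion or insertion merges/splits exactly one component while preserving expansion, toggling $\c$ by one against a change in $|D|$. Equivalently — and this is the route I would attempt first — I would use Möbius inversion over this tight-set lattice to rewrite the signed expander count as an alternating sum of \emph{weak-Hall} counts, i.e. of numbers of compatible assignments $\varphi\colon W\to V$ with $(\varphi(w),w)\in E$; such assignment counts are manifestly sums over functions and hence satisfy the cyclic inclusion–exclusion relation by the very computation in \cref{prop:CIE_in_Kernel}.

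The main obstacle is precisely this reconciliation of the component sign with the non-monotone expander condition. In the assignment computation the terms that fail to cancel in \cref{prop:CIE_in_Kernel} are exactly the assignments using \emph{all} edges of $C^+$; I must show that these are accounted for by the gap between the weak-Hall and strict-expander conditions, that is, by the connectivity factor $(-1)^{\c}$. Making this matching precise — equivalently, writing the involution explicitly and checking that it is well defined on every component met by $C$ — is where the real work lies; the rest is the same elementary unfolding as in \cref{prop:CIE_in_Kernel}.
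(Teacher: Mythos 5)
Your reduction to the vanishing of $S_h$ for each fixed decoration $h$ is sound (the candidate decorations do not depend on $D$, since deleting edges changes neither the vertex bipartition nor the global balance condition), and you have correctly identified why the argument of \cref{prop:CIE_in_Kernel} does not transfer: the expander indicator is not monotone under edge deletion, and the component sign $(-1)^{\c}$ must take part in the cancellation. But that diagnosis is where your proof stops. The sign-reversing involution is described only by its desired effect (``single out a canonical edge \dots whose deletion or insertion merges/splits exactly one component while preserving expansion''); you neither define this canonical edge nor verify that toggling it preserves the expander property, changes $|D|+\c(B\setminus D)$ by an odd amount, and squares to the identity on the surviving set of subsets $D$. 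The alternative M\"obius-inversion route is likewise asserted rather than executed: you would have to specify the poset of tight sets, compute its M\"obius function, and show that the resulting ``weak-Hall'' counts really are sums over functions of indicators of the form ``$f$ is $(B\setminus D)$ non-decreasing'' so that the computation of \cref{prop:CIE_in_Kernel} applies uniformly in $D$ --- none of which is done. By your own admission this is ``where the real work lies''; a proof that stops at the real work is a plan, not a proof.

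You should also know that the paper itself does not prove this proposition combinatorially: its proof is a citation to \cite[Lemma 8.3]{DolegaFeraySniady2008}, whose argument goes through Euler-characteristic computations, and the remark immediately following the proposition states explicitly that an elementary proof would be interesting, i.e.\ is not currently known. The step you have left open is therefore exactly this open problem, and your one worked example (the four-cycle from the proof of \cref{LemBiggerGraphsBipartite}, which you do verify correctly) gives no evidence that either of your two proposed devices closes it in general --- for instance, nothing in your sketch rules out a cycle $C$ whose $C^+$-edges, when deleted in various combinations, produce component counts whose parities fail to pair up under any edge-by-edge toggle.
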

\begin{proof}
    See \cite[Lemma 8.3]{DolegaFeraySniady2008}.
\end{proof}
\begin{remark}
    While all elements in the statement of \cref{prop:Iinv} are combinatorial,
    the proof given in \cite{DolegaFeraySniady2008} involves computations of Euler characteristic.
    An {\em elementary} proof would certainly be interesting.
\end{remark}

\subsection{Background on Kerov character polynomials}
We only present here what is strictly necessary to explain our
application of \cref{thm:Com_Bip}. 
As this is not central in the paper,
we assume some familiarity of the reader with representation theory of symmetric groups.
Details and motivations can be found in \cite{DolegaFeraySniady2008} and references therein.

Let $\mu$ be fixed integer partition. Consider the function
\[\Ch_{\mu}(\lambda)=
\begin{cases}
    |\lambda|(|\lambda|-1)\cdots (|\lambda|-|\mu|+1) \frac{\chi^\lambda_{\mu\, 1^{|\lambda|-|\mu|}}}
    {\dim(\lambda)} &\text{ if }|\lambda| \ge |\mu| ;\\
    0 &\text{ if }|\lambda| < |\mu|.
\end{cases}\]
Here $\lambda$ is a Young diagram, $\dim(\lambda)$ the dimension of the associated irreducible representation
of the symmetric group and $\chi^\lambda_{\mu\, 1^{|\lambda|-|\mu|}}$ the associated character
evaluated on a permutation of cycle-type $\mu \cup (1^{|\lambda|-|\mu|})$.

Consider a diagram given by its modified multirectangular coordinates $(p_1,\cdots,p_m)$
and $(q_1,\cdots,q_m)$,
that is
$$\lambda(\s{p},\s{q}):=\underbrace{\sum_{i \geq 1} q_i, \ldots, \sum_{i \geq 1} q_i}_{p_1 \text{ times}},\underbrace{\sum_{i \geq 2} q_i,\ldots,\sum_{i \geq 2} q_i}_{p_2 \text{ times}},\ldots$$
It has been shown (see {\em e.g.} \cite[Theorem 1.5.1]{F'eray2008}) that
\begin{equation}
    \Ch_{\mu}(\lambda(\s{p},\s{q}))=
\sum_{\sigma,\tau \in S_k \atop \sigma \, \tau = \pi} (-1)^{\kappa(\tau) +r} \Delta(B(\sigma,\tau))(\s{p},\s{q}), 
\label{eq:StanleyForm}
\end{equation}
where:
\begin{itemize}
    \item $k$ and $r$ are respectively the size and the length of $\mu$ and $S_k$ the symmetric group of size $k$;
    \item $\pi$ is a fixed (arbitrary) permutation of cycle-type $\mu$;
    \item $\kappa(\tau)$ is the number of cycles of $\tau$;
    \item $B(\sigma,\tau)$ is a bipartite graph associated to the pair of permutations $\sigma$ and $\tau$
        (its precise definition is not important here);
    \item $\Delta(B)$ is a two-alphabet version of $\Gamma(B)$, namely:
        \[\Delta(B)(\s{p},\s{q})= \sum_{\gf{f: V \sqcup W \to \N}{f\ B \text{ non-decreasing}}} \left(
        \prod_{v \in V} p_{f(v)} \cdot \prod_{w \in W} q_{f(w)}  \right),\]
        where $V \sqcup W$ is the proper bipartition of vertices of $B$ without isolated vertices in $W$.
\end{itemize}

Another family of functions of interest is the family of {\em free cumulants},
which can be defined as follows:
\begin{equation}
    R_{k+1}(\lambda(\s{p},\s{q}))=
    \sum_{{\sigma,\tau \in S_k \atop \sigma\, \tau = (1\ 2\ \dots\ k)} \atop \kappa(\sigma)+\kappa(\tau)=k+1}
    (-1)^{\kappa(\tau) +1} \Delta(B(\sigma,\tau))(\s{p},\s{q}).
\label{eq:StanleyFormR}
\end{equation}
The restriction $\kappa(\sigma)+\kappa(\tau)=k+1$ imposed in the summation index is in fact 
equivalent (under the assumption $\sigma \, \tau = (1\ 2\ \dots\ k)$) to the fact that
$B(\sigma,\tau)$ has no cycles.

In 2001, S. Kerov proved that, for each partition $\mu$, there exists a polynomial $K_\mu$,
now called Kerov polynomial, such that, for every Young diagram $\lambda$, one has
\begin{equation}
    \Ch_\mu(\lambda) = K_\mu(R_2(\lambda),R_3(\lambda),\dots,R_{|\mu|+1}(\lambda)).
    \label{eq:Kerov}
\end{equation}
He then conjectured -- see \cite{Biane2003} -- that $K_{(k)}$ has non-negative
coefficients for any positive integer $k$.
This result was proved by the author in \cite{F'eray2008} and an explicit combinatorial
interpretation of the coefficients was given in \cite{DolegaFeraySniady2008}.
We explain in next Section how \cref{thm:Com_Bip} and the invariants $I_\nu$
may be used to simplify the arguments in these papers.

\subsection{Application of our main result}
Similarly to $\Gamma$, the two-alphabet version $\Delta$ can be extended
by linearity to the bipartite graph algebra $\G_b$.
Consider elements $G_{\Ch_\mu}$ and $G_{R_k}$ in the graph algebra such that
\[\Ch_\mu(\lambda(\s{p},\s{q})) = \Delta({G_{\Ch_\mu}})( \s{p},\s{q}), \quad R_k(\lambda(\s{p},\s{q}))=\Delta({G_{R_k}})( \s{p},\s{q}),\]
that is
\begin{align*}
    G_{\Ch_\mu} & = \sum_{\sigma,\tau \in S_k \atop \sigma \, \tau = \pi} (-1)^{\kappa(\tau) +r} B(\sigma,\tau)  ;\\
    G_{R_{k+1}} & = \sum_{ {\sigma,\tau \in S_k \atop \sigma\, \tau = (1\ 2\ \dots\ k)} \atop \kappa(\sigma)+\kappa(\tau)=k+1}
        (-1)^{\kappa(\tau) +1} B(\sigma,\tau).
\end{align*}
Then observe that the \cref{eq:Kerov} for any Young diagram $\lambda$
implies that
\[   \Ch_\mu(\lambda(\s{p},\s{q})) = K_\mu(R_2(\lambda(\s{p},\s{q})),R_3(\lambda(\s{p},\s{q})),\dots,R_{|\mu|+1}(\lambda(\s{p},\s{q}))) \]
as polynomials in infinitely many variables $p_1,q_1,p_2,q_2,\cdots$, so that
\[\Delta(G_{\Ch_\mu})= K_\mu\big(\Delta(G_{R_2}),\dots,\Delta(G_{R_k}) \big) 
=\Delta\big(K_\mu(G_{R_2},\dots,G_{R_k})\big).\]
Recall indeed that the product in the graph algebra is given by disjoint union of graphs
and that $\Delta$ is clearly an algebra morphism with respect to this product.

But sending $p_i,q_i \to x_i$ sends $\Delta(B)$ to $\Gamma(B)$,
thus the difference
\[A:= G_{\Ch_\mu} - K_\mu(G_{R_2},\dots,G_{R_k}) \]
lies in $\Ker(\Gamma)$.
By \cref{thm:Com_Bip}, it lies in $\Bideal$ and thus \cref{prop:Iinv}
implies that $I_\nu(A)=0$ for any partition $\nu$.

But, one can easily seen from \cref{LemForestsExpanders} 
(recall that graphs appearing in $G_{R_k}$ have no cycles) that 
\[I_\nu(G_{R_{i_1}} \cdots G_{R_{i_\ell}}) =\begin{cases}
    (-1)^{\ell} &\begin{tabular}{l} if $\nu$ is obtained by antisorting\\ $(i_1-1,\dots,i_\ell-1)$ in decreasing order;
    \end{tabular}\\
    0 &\text{ otherwise.}
\end{cases}\]
Therefore $I_\nu(K_\mu(G_{R_2},\dots,G_{R_k}))$ is, up to a sign, the coefficient of the monomial
\[ \prod_{i=1}^{\ell(\nu)} R_{\nu_i+1} \]
in $K_\mu$. From the relation $I_\nu(A)=0$, we get that it is also equal to $I_\nu(G_{\Ch_\mu})$.
This last quantity is a signed enumeration of expander graphs,
so that we obtain a signed combinatorial interpretation for coefficients of Kerov polynomials.

This signed combinatorial interpretation is equivalent to \cite[Theorem 1.6]{DolegaFeraySniady2008}.
In the case $\mu=(k)$, the signs disappear and the non-negativity of the coefficients of $K_{(k)}$ follows.

\subsection{Comparison with the proofs given in \cite{DolegaFeraySniady2008}}
In \cite{DolegaFeraySniady2008}, two proofs of the result above were given.
The first one is quite different from the one sketched above.
The second one also used cyclic inclusion-exclusion and \cref{prop:Iinv},
but a huge part of the proof was dedicated to proving the fact that the quantity $A$ belongs to $\Bideal$
-- see \cite[Sections 3 and 4]{F'eray2008}.
With \cref{thm:Com_Bip}, it follows immediately from the fact that $\Delta(A)=0$.

Besides, the proof that $A \in \Bideal$ given in \cite[Sections 3 and 4]{F'eray2008}, uses the structure of the symmetric group,
while the argument that we use here works if we replace $\Ch_\mu$ by any function that
has an expression similar to \cref{eq:StanleyForm} -- for instance the zonal characters studied in \cite{FeraySniady2011}.
Note that the first proof of paper \cite{DolegaFeraySniady2008}
also extends readily to zonal characters, so the result that we obtain that way is not new.

\bibliographystyle{abbrv}
\bibliography{../courant.bib}

\def\cprime{$'$}
\begin{thebibliography}{10}

\bibitem{AguiarBook}
M.~Aguiar and S.~A. Mahajan.
\newblock {\em Monoidal functors, species and {H}opf algebras}.
\newblock American Mathematical Society Providence, RI, 2010.

\bibitem{AvecLesJeansFPSAC}
J.-C. Aval, V.~Féray, J.-C. Novelli, and J.-Y. Thibon.
\newblock Super quasi-symmetric functions via {Y}oung diagrams.
\newblock {\em DMTCS proc. of FPSAC}, AT:169--180, 2014.

\bibitem{BergeronZabrockiSymQSymFreeCofree}
N.~Bergeron and M.~Zabrocki.
\newblock {The Hopf algebras of symmetric functions and quasisymmetric
  functions in non-commutative variables are free and cofree}.
\newblock {\em J. of Algebra and its Applications}, 8(4):581--600, 2009.

\bibitem{Biane2003}
P.~Biane.
\newblock Characters of symmetric groups and free cumulants.
\newblock In {\em Asymptotic combinatorics with applications to mathematical
  physics (St. Petersburg, 2001)}, volume 1815 of {\em Lecture Notes in Math.},
  pages 185--200. Springer, Berlin, 2003.

\bibitem{BoussicaultF'eray2009}
A.~Boussicault and V.~F\'eray.
\newblock Application of graph combinatorics to rational identities of type
  {A}.
\newblock {\em Elec. Jour. Combinatorics}, 16(1):R145, 2009.

\bibitem{DolegaFeraySniady2008}
M.~Do{\l}\k{e}ga, V.~F{\'e}ray, and P.~{\'S}niady.
\newblock Explicit combinatorial interpretation of {K}erov character
  polynomials as numbers of permutation factorizations.
\newblock {\em Adv. Math.}, 225(1):81--120, 2010.

\bibitem{EhrenborgQSymPosets}
R.~Ehrenborg.
\newblock On posets and {H}opf algebras.
\newblock {\em Adv. Math.}, 119(1):1--25, 1996.

\bibitem{F'eray2008}
V.~F{\'e}ray.
\newblock Combinatorial interpretation and positivity of {K}erov's character
  polynomials.
\newblock {\em J. Algebr. Comb}, 29(4):473 -- 507, 2009.

\bibitem{FeraySniady2011}
V.~F{\'e}ray and P.~{\'S}niady.
\newblock Zonal polynomials via {S}tanley's coordinates and free cumulants.
\newblock {\em J. Algebra}, 334:338--373, 2011.

\bibitem{GesselQSym}
I.~Gessel.
\newblock {Multipartite P-partitions and inner products of Schur functions}.
\newblock {\em Contemp. Math}, 34:289--302, 1984.

\bibitem{GreeneRationalIdentity}
C.~Greene.
\newblock A rational function identity related to the {M}urnaghan-{N}akayama
  formula for the characters of {$S_n$}.
\newblock {\em J. Algebr. Comb.}, 1(3):235--255, 1992.

\bibitem{KnuthPPartitions}
D.~Knuth.
\newblock A note on solid partitions.
\newblock {\em Mathematics of Computation}, 24:955--961, 1970.

\bibitem{LuotoBasisQSym}
K.~Luoto.
\newblock {A matroid-friendly basis for the quasisymmetric functions}.
\newblock {\em Journal of Combinatorial Theory, Series A}, 115(5):777--798,
  2008.

\bibitem{BookQSym}
K.~Luoto, S.~Mykytiuk, and S.~Van~Willigenburg.
\newblock {\em An introduction to quasisymmetric Schur functions: Hopf
  algebras, quasisymmetric functions, and Young composition tableaux}.
\newblock Springer Briefs in Mathematics. 2013.

\bibitem{NovelliThibonTrialgebras}
J.-C. Novelli and J.-Y. Thibon.
\newblock Polynomial realizations of some trialgebras.
\newblock {\em FPSAC proceedings}, pages 243--255, 2006.

\bibitem{Expanders}
N.~L. S.~Hoory and A.~Wigderson.
\newblock Expander graphs and their applications.
\newblock {\em Bull. Amer. Math. Soc.}, 43:439--561, 2006.

\bibitem{OEIS}
N.~Sloane and al.
\newblock The on-line encyclopedia of integer sequences.
\newblock published electronically at {\tt http://oeis.org}.

\bibitem{StanleyOrderedStructures}
R.~Stanley.
\newblock {\em {Ordered structures and partitions}}, volume 119 of {\em Memoirs
  of the Amer. Math. Soc.}
\newblock 1972.

\bibitem{Stanley:EC2}
R.~Stanley.
\newblock {\em {Enumerative combinatorics, Vol. 2}}, volume~62 of {\em
  Cambridge Studies in Advanced Mathematics}.
\newblock Cambridge University Press, 1999.

\bibitem{StanleyDescentConnectivitySets}
R.~Stanley.
\newblock The descent set and connectivity set of a permutation.
\newblock {\em Journal of Integer Sequences}, 8(2):3, 2005.

\bibitem{WilsonExtensxionMacMahon}
A.~Wilson.
\newblock An extension of {MacMahon}'s equidistribution theorem to ordered
  multiset partitions.
\newblock {\em {DMTCS} Proceedings of FPSAC}, AT:345--356, 2014.

\end{thebibliography}

\end{document}